\documentclass[11p]{article}



\usepackage{amsfonts}
\usepackage{amsmath}
\usepackage{amsthm}
\usepackage{amssymb} 

\usepackage{mdframed} 
\usepackage{thmtools}
\usepackage{enumitem}



 \usepackage{xcolor}
\usepackage{color}
\usepackage{graphicx}

\usepackage{algorithm}
\usepackage[noend]{algpseudocode}
\usepackage{verbatim}

\usepackage[algo2e]{algorithm2e} 





\newcommand{\R}{\mathbb{R}} 


\newcommand{\cD}{{\cal D}}

\newcommand{\cI}{{\cal I}}

\newcommand{\cO}{{\cal O}}



\newcommand{\mA}{{\bf A}}
\newcommand{\mB}{{\bf B}}

\newcommand{\mD}{{\bf D}}
\newcommand{\mE}{{\bf E}}

\newcommand{\mI}{{\bf I}}

\newcommand{\mM}{{\bf M}}

\newcommand{\mP}{{\bf P}}
\newcommand{\mQ}{{\bf Q}}

\newcommand{\mS}{{\bf S}}

\usepackage[colorinlistoftodos,bordercolor=orange,backgroundcolor=orange!20,linecolor=orange,textsize=scriptsize]{todonotes}



\newcommand{\eqdef}{:=} 



\DeclareMathOperator{\argmin}{argmin}        



\DeclareMathOperator{\diag}{diag}       

\newcommand{\Diag}[1]{\mathbf{Diag}\left( #1\right)}

\providecommand{\trace}[1]{{\rm Trace}\left( #1\right)}


\newcommand{\Prob}{\mathbb{P}}

\newcommand{\E}[1]{\mathbb{E}\left[#1\right] }


%

\declaretheorem[style=shaded,within=section]{definition}
\declaretheorem[style=shaded,sibling=definition]{theorem}

\declaretheorem[style=shaded,sibling=definition]{assumption}

\declaretheorem[style=shaded,sibling=definition]{lemma}




\theoremstyle{remark}
\newtheorem{example}{Example} 
\newtheorem{remark}{Remark} 

\usepackage{fullpage}




\usepackage[utf8]{inputenc} 
\usepackage[T1]{fontenc}    
\usepackage{url}            
\usepackage{booktabs}       
\usepackage{amsfonts}       
\usepackage{nicefrac}       
\usepackage{microtype}      

\usepackage{graphicx}
\graphicspath{{experiments/images/}}


\title{Accelerated Coordinate Descent with Arbitrary Sampling \\ and Best Rates for Minibatches}
 


%

\author{Filip Hanzely\thanks{King Abdullah University of Science and Technology, Kingdom of Saudi Arabia}   \qquad  Peter Richt\'{a}rik\thanks{School of Mathematics, University of Edinburgh, United Kingdom --- King Abdullah University of Science and Technology, Kingdom of Saudi Arabia --- Moscow Institute of Physics and Technology, Russia}}

\date{May 19, 2018}


\begin{document}

\maketitle

\begin{abstract}
Accelerated coordinate descent is a widely popular optimization algorithm due to its efficiency on large-dimensional problems. It achieves state-of-the-art complexity on an important class of empirical risk minimization problems.  In this paper we design and analyze an accelerated coordinate descent (\texttt{ACD}) method which in each iteration updates a random subset of coordinates according to an arbitrary but fixed probability law, which is a parameter of the method. If all coordinates are updated in each iteration, our method reduces to the classical accelerated gradient descent method \texttt{AGD} of Nesterov. If a single coordinate is updated in each iteration, and we pick probabilities proportional to the square roots of the coordinate-wise Lipschitz constants, our method reduces to the currently fastest coordinate descent method \texttt{NUACDM} of Allen-Zhu, Qu, Richt\'{a}rik and Yuan. 

While mini-batch variants of \texttt{ACD} are more popular and relevant in practice, there is no importance sampling for \texttt{ACD} that outperforms the standard uniform mini-batch sampling. Through insights enabled by our general analysis, we design new importance sampling for mini-batch \texttt{ACD} which significantly outperforms previous state-of-the-art minibatch \texttt{ACD} in practice. We prove a rate that is at most $\cO(\sqrt{\tau})$ times worse than the rate of minibatch \texttt{ACD} with uniform sampling, but can be $\cO(n/\tau)$ times better, where $\tau$ is the minibatch size. Since in modern supervised learning training systems it is standard practice to choose $\tau \ll n$, and often $\tau=\cO(1)$, our method can lead to dramatic speedups. Lastly, we obtain similar results for minibatch nonaccelerated  \texttt{CD} as well, achieving improvements on previous best rates.  
\end{abstract}

\section{Introduction}

Many key problems in machine learning and data science are routinely modeled as optimization problems and solved via optimization algorithms. With the increase of the volume of data used to formulate optimization models, there is a need for new efficient algorithms able to cope with the challenge. Through intensive research and algorithmic innovation during the last 10-15 years, gradient methods  have   become the methods of choice for large-scale optimization problems.

In this paper we consider the optimization problem
\begin{equation}\label{eq:problem}
\min_{x\in \R^n} f(x),
\end{equation}
where  $f$ a smooth and strongly convex function, and the main difficulty  comes from the dimension $n$ being very large (e.g., millions or billions). 
In this regime, {\em coordinate descent (\texttt{CD})} variants of gradient methods are the state of the art. 

The simplest variant of  \texttt{CD} in each iterations updates a single variable of $x$ by taking a one dimensional gradient step along the direction of $i$th unit basis vector $e_i\in \R^n$, which leads to the update rule \begin{equation} \label{eq:CD-intro}\boxed{x^{k+1} = x^k - \alpha_i \nabla_i f(x^k) e_i}\end{equation} where $\nabla_i f(x^k) \eqdef e_i^\top \nabla f(x^k)$ is the $i$th partial derivative and $\alpha_i$ is a suitably chosen stepsize.  The classical smoothness assumption used in the analysis of \texttt{CD} methods~\cite{Nesterov:2010RCDM} is to require the existence of constants $L_i>0$ such that \begin{equation}\label{eq:98g98gf} f(x+ t e_i) \leq f(x) + t \nabla_i f(x)+ \frac{L_i}{2} t^2\end{equation} holds for all $x\in \R^n$, $t\in \R$ and $i\in [n]\eqdef \{1,2,\dots,n\}$. In this setting, one can choose the stepsizes to be $\alpha_i = 1/L_i$. 

There are several rules studied in the literature for  choosing the coordinate $i$ in iteration $k$, including cyclic rule~\cite{luo1992convergence,tseng2001convergence,saha2013nonasymptotic,wright2015coordinate,gurbuzbalaban2017cyclic}, Gauss-Southwell or other greedy rules~\cite{nutini2015coordinate,you2016asynchronous,stich2017approximate}, random (stationary) rule~\cite{Nesterov:2010RCDM, UCDC, PCDM, shalev2014accelerated, lin2014accelerated, APPROX} and adaptive random rules~\cite{csiba15,stich2017safe}. In this work we focus on stationary random rules, which are popular by practitioners and well understood in theory.

\textbf{Updating one coordinate at a time.} The simplest randomized \texttt{CD} method  of the form \eqref{eq:CD-intro} chooses coordinate $i$  in each iteration uniformly at random. If $f$ is $\sigma$--convex\footnote{We say that $f$ is $\sigma$--convex if it is strongly convex with strong convexity modulus $\sigma>0$. That is, if $f(x+h) \geq f(x) + (\nabla f(x))^\top h + \tfrac{\sigma}{2}\|h\|^2$ for all $x,h\in \R^n$, where $\|h\|\eqdef (\sum_i h_i^2)^{1/2}$ is the standard Euclidean norm.}, then this method converges in $(n \max_i  L_i/\sigma) \log (1/\epsilon)$ iterations in expectation. If index $i$ is chosen with probability $p_i \propto L_i$, then the iteration complexity  improves to $(\sum_i L_i/\sigma) \log (1/\epsilon)$. The latter result is always better than the former, and can be up to $n$ times better. These results were established in a seminal paper by Nesterov~\cite{Nesterov:2010RCDM}. The analysis was later generalized to  arbitrary probabilities $p_i>0$ by Richt\'{a}rik and Tak\'{a}\v{c}~\cite{UCDC}, who obtained the complexity \begin{equation} \label{eq:089ff} \left(\max_i \frac{L_i}{p_i \sigma}\right) \log \frac{1}{\epsilon}.\end{equation}  Clearly, \eqref{eq:089ff} includes the previous two results as special cases. Note that the importance sampling $p_i\propto L_i$ minimizes the complexity bound \eqref{eq:089ff} and is therefore in this sense optimal.

\textbf{Mini-batching: updating more coordinates at a time.} In many situations it  is advantageous to update a small {\em subset (mini-batch)} of coordinates in each iteration, which leads to the 
{\em mini-batch \texttt{CD} method} which has  the form
 \begin{equation} \label{eq:Parallel-CD-intro} \boxed{x^{k+1}_i = \begin{cases} x^k_i - \alpha_i \nabla_i f(x^k) & \quad i\in S^k, \\
 x^k_i & \quad i\notin S^k.
 \end{cases}
 }
 \end{equation} 

   For instance, it is often equally easy to fetch information about a small batch of coordinates $S^k$ from memory at the same or comparable  time as it is to fetch information about a single coordinate. If this memory access time is the bottleneck as opposed to computing the actual updates to coordinates $i\in S^k$, then it is more efficient to update all coordinates belonging to the mini-batch $S^k$. Alternatively, in situations where parallel processing is available, one is able to compute the updates to a small batch of coordinates simultaneously, leading to speedups in wall clock time. With this application in mind, mini-batch \texttt{CD} methods are also often called {\em parallel} \texttt{CD} methods \cite{PCDM}.



\section{Arbitrary sampling and  mini-batching}

\textbf{Arbitrary sampling.} Richt\'{a}rik and Tak\'{a}\v{c}~\cite{PCDM} analyzed method \eqref{eq:Parallel-CD-intro}  for {\em uniform samplings} $S^k$, i.e., assuming that $\Prob( i\in S^k) = \Prob(j\in S^k)$  for all $i,j$.  However, the ultimate generalization is captured by the notion of {\em arbitrary sampling}  pioneered by Richt\'{a}rik and Tak\'{a}\v{c} \cite{ NSync}. A {\em sampling} refers to a set-valued random mapping $S$ with values being the subsets of $[n]$. The word {\em arbitrary} refers to the fact that no additional assumptions on the sampling, such as uniformity, are made. This result generalizes the results mentioned above.

For mini-batch \texttt{CD} methods it is useful to assume a more general notion of smoothness parameterized by a positive semidefinite matrix $\mM\in \R^{n\times n}$. We say that $f$ is $\mM$--smooth\footnote{The standard $L$--smoothness condition is obtained in the special case when $\mM = L \mI$, where $\mI$ is the identity matrix in $\R^n$. Note that if $f$ is $\mM$--smooth, then \eqref{eq:98g98gf} holds for $L_i=\mM_{ii}$. Conversely, it is known that if \eqref{eq:98g98gf} holds, then \eqref{eq:M-smooth-intro} holds for $\mM = n \Diag{L_1,L_2,\dots,L_n}$~\cite{Nesterov:2010RCDM}. If $h$ has at most $\omega$ nonzero entries, then this result can be strengthened and \eqref{eq:M-smooth-intro} holds with $\mM = \omega \Diag{L_1,L_2,\dots,L_n}$ \cite[Theorem 8]{PCDM}. In many situations, $\mM$--smoothness is a very natural assumption. For instance, in the context of empirical risk minimization (ERM), which is a key problem in supervised machine learning, $f$ is of the form
$f(x) = \frac{1}{m}\sum_{i=1}^m f_i(\mA_i x) + \frac{\sigma}{2} \|x\|^2,$
where $\mA_i\in \R^{q\times n}$ are data matrices, $f_i:\R^q\to \R$ are loss functions and $\lambda\geq 0$ is a regularization constant.  If $f_i$ is convex and  $\gamma_i$--smooth, then $f$ is $\sigma$--convex and $\mM$--smooth with $\mM = (\frac{1}{m}\sum_i \gamma_i \mA_i^\top \mA_i) + \sigma \mI$ \cite{ESO}. In these situations it is useful to design \texttt{CD} algorithms making full use of the information contained in the data as captured in the smoothness matrix $\mM$.} if 
\begin{equation}\label{eq:M-smooth-intro}f(x+h) \leq f(x) + \nabla f(x)^\top h + \frac{1}{2}h^\top \mM h\end{equation}
for all $x,h\in \R^n$. Given a sampling $S$ and $\mM$--smooth function $f$, let $v=(v_1,\dots,v_n)$ be positive constants satisfying the ESO (expected separable overapproximation) inequality
\begin{equation}\label{eq:v_def}
\mP\circ \mM\preceq \Diag{p_1 v_1, \dots, p_n v_n},
\end{equation}
where $\mP$ is the {\em probability matrix} associated with sampling $S$, defined by $\mP_{ij}\eqdef \Prob(i\in S \; \& \; j\in S)$, $p_i \eqdef \mP_{ii}=\Prob(i\in S)$ and $\circ$ denotes the Hadamard (i.e., elementwise) product of matrices. From now on we define the {\em probability vector} as $p\eqdef (p_1,\dots,p_n)\in \R^n$ and let $v=(v_1,\dots,v_n)\in \R^n$ be the vector of ESO parameters. With this notation, \eqref{eq:v_def} can be equivalently written as $\mP\circ \mM\preceq \Diag{p\circ v}$. We say that $S$ is {\em proper} if $p_i>0$ for all $i$. 

It can be show by combining the results from \cite{NSync} and \cite{ESO} that under the above assumptions, the minibatch \texttt{CD} method \eqref{eq:Parallel-CD-intro} with stepsizes $\alpha_i= 1/v_i$ enjoys the iteration complexity \begin{equation}\label{eq:NSync}\left(\max_{i}\frac{ v_i}{ p_i \sigma}\right) \log \frac{1}{\epsilon} .\end{equation}
Since in situations when $|S^k|=1$ with probability 1 once can choose $v_i=L_i$,  the complexity result \eqref{eq:NSync} generalizes \eqref{eq:089ff}. Inequality~\eqref{eq:v_def} is  standard in minibatch coordinate descent literature. It was studied extensively in \cite{ESO}, and has been used to analyze parallel \texttt{CD} methods \cite{PCDM, NSync, APPROX}, distributed \texttt{CD} methods \cite{Hydra, Hydra2}, accelerated \texttt{CD} methods in \cite{APPROX, Hydra2, ALPHA, SCP}, and dual methods \cite{Quartz, SCP}. 

\textbf{Importance sampling for mini-batches.}  It is easy to see, for instance, that if we do not restrict the class of samplings over which we optimize, then the trivial {\em full sampling} $S^k = [n]$ with probability 1 is optimal. For this sampling, $\mP$ is the matrix of all ones, $p_i=1$ for all $i$, and \eqref{eq:v_def} holds for $v_i=L\eqdef \lambda_{\max}(\mM)$ for all $i$. The mini-batch \texttt{CD} method \eqref{eq:Parallel-CD-intro} reduces  to gradient descent, and the complexity estimate \eqref{eq:NSync} becomes $(L/\sigma) \log (1/\epsilon)$, which is the standard rate of gradient descent. However, typically we are interested in  finding the best sampling from the class of samplings which use a mini-batch of size $\tau$, where $\tau\ll n$. While we have seen that the importance sampling $p_i=L_i/\sum_j L_j$ is optimal for $\tau=1$, in the mini-batch case $\tau>1$  the problem of determining a sampling which minimizes the bound \eqref{eq:NSync} is much more difficult. For instance, \cite{NSync} consider a certain parametric family of samplings where the problem of finding the best sampling from this family reduces to a linear program. 

Surprisingly, and in contrast to the situation in the $\tau=1$ case where an optimal sampling is known and is in general non-uniform, there is no mini-batch sampling that is guaranteed to outperform $\tau$--nice sampling. We say that $S$ is $\tau$--nice if it samples uniformly from among all subsets of $[n]$ of cardinality $\tau$. The probability matrix of this sampling is given by 
\[\mP = \frac{\tau}{n}\left((1-\beta) \mI + \beta \mE \right),\]
where $\beta = \tfrac{\tau-1}{n-1}$ (assume $n>1$) and $\mE$ is the matrix of all ones, and $p_i = \tfrac{\tau}{n}$ \cite{ESO}. It follows that the ESO inequality \eqref{eq:v_def} holds for $v_i = (1-\beta) \mM_{ii} + \beta L.$ By plugging into  \eqref{eq:NSync}, we get the iteration complexity
\begin{equation}\label{eq:tau-nice-rate} \frac{n}{\tau}\left( \frac{ (1-\beta) \max_i \mM_{ii} + \beta L}{\sigma}  \right) \log \frac{1}{\epsilon}.\end{equation}

This rate interpolates between the rate of \texttt{CD} with uniform probabilities (for $\tau=1$) and the rate of gradient descent (for $\tau=n$).


\begin{table}
\centering
\begin{tabular}{|c|c|c|c|c|}
\hline
& \texttt{CD}  &   \texttt{ACD}  \\
\hline
$\tau=1$, $p_i >0 $ & $\left(\max_i \frac{L_i}{p_i \sigma}\right) \log \frac{1}{\epsilon}$~\cite{UCDC} & $ \sqrt{ \max_i \frac{L_i}{p_i^2 \sigma} }   \log \frac{1}{\epsilon}$ {\bf [this paper]}\\
\hline
$\tau=1$, best $p_i$ & $\frac{\sum_i L_i}{\sigma }\log \frac{1}{\epsilon}$; \quad  $p_i\propto L_i$~\cite{Nesterov:2010RCDM} &$\frac{\sum_i \sqrt{L_i}}{\sqrt{\sigma}}  \log \frac{1}{\epsilon}$;  \quad $p_i\propto \sqrt{L_i}$~\cite{allen2016even} \\
\hline
\hline
arbitrary sampling & $\left(\max_i \frac{v_i}{p_i \sigma}\right) \log \frac{1}{\epsilon}$~\cite{NSync} & $ \sqrt{ \max_i \frac{v_i}{p_i^2 \sigma} }   \log \frac{1}{\epsilon}$ {\bf [this paper]}\\
\hline
\end{tabular}
\caption{Complexity results for \texttt{CD} and \texttt{ACD} methods for $\sigma$--convex functions.}
\label{tab:main}
\end{table}

\section{Contributions}

For {\em accelerated coordinate descent (\texttt{ACD})} without mini-batching (i.e., when $\tau=1$), the currently best known iteration complexity result, due to Allen-Zhu et al~\cite{allen2016even}, is
\begin{equation}\label{eq:ZAZ}\cO\left(\frac{\sum_i \sqrt{L_i}}{\sqrt{\sigma}}  \log \frac{1}{\epsilon}\right).\end{equation}
 The probabilities used in the algorithm are proportional to the square roots of the coordinate-wise Lipschitz constants: $p_i \propto \sqrt{L_i}$. This is the first \texttt{CD} method with a complexity guarantee which does not explicitly depend on the dimension $n$, and is an improvement on the now-classical result of Nesterov~\cite{Nesterov:2010RCDM} giving the complexity
\[\cO\left(\sqrt{ \frac{n \sum_i L_i }{ \sigma}}  \log \frac{1}{\epsilon} \right).\]
The rate \eqref{eq:ZAZ} is always better than this, and can be up to $\sqrt{n}$ times better if the distribution of $L_i$ is extremely non-uniform. Unlike in the non-accelerated case described in the previous section, there is no complexity result for \texttt{ACD} with general probabilities such as \eqref{eq:089ff}, or with an arbitrary sampling such as \eqref{eq:NSync}. In fact, an \texttt{ACD} method was not even designed in such settings, despite a significant recent development in accelerated coordinate descent methods~\cite{nesterov2012efficiency,Lee2013,lin2014accelerated,ALPHA,allen2016even}.

Our key contributions are:

 {\bf $\diamond$ \texttt{ACD} with arbitrary sampling.} We design an \texttt{ACD} method which is able to operate with an {\em arbitrary sampling} of subsets of coordinates. We describe our method in Section~\ref{sec:ACD}.
 
{\bf $\diamond$  Iteration complexity.} We prove (see Theorem~\ref{th:acd}) that the iteration complexity of \texttt{ACD} is 
\begin{equation}\label{eq:ug98sg98s}\cO\left( \sqrt{ \max_i \frac{v_i}{p_i^2 \sigma} }   \log \frac{1}{\epsilon}\right),\end{equation}
where $v_i$ are ESO parameters given by \eqref{eq:v_def} and $p_i>0$ is the probability that coordinate $i$ belongs to the sampled set $S^k$: $p_i\eqdef \Prob(i\in S^k)$. The result of Allen-Zhu et al.\ \eqref{eq:ZAZ} (\texttt{NUACDM}) can be recovered as a special case of  \eqref{eq:ug98sg98s} by focusing on samplings defined by $S^k=\{i\}$ with probability $p_i \propto \sqrt{L_i} $ (recall that in this case $v_i=L_i$). When $S^k=[n]$ with probability 1, then our method reduces to accelerated gradient descent (\texttt{AGD})~\cite{nesterov1983method,NesterovBook}, and since $p_i=1$  and $v_i=L$ (the Lipschitz constant of $\nabla f$) for all $i$,  \eqref{eq:ug98sg98s} reduces to the standard complexity of \texttt{AGD}:
$\cO(\sqrt{L/\sigma} \log (1/\epsilon)).$

 {\bf $\diamond$  Weighted strong convexity.} In fact, we prove a more general result than \eqref{eq:ug98sg98s} in which we allow the strong convexity of $f$ to be measured in a weighted Euclidean norm with weights $v_i/p_i^2$. In situations when $f$ is naturally strongly convex with respect to a weighted norm, this more general result will typically lead to a better complexity result than \eqref{eq:ug98sg98s}, which is fine-tuned for standard strong convexity.
There are applications when $f$ is naturally a strongly convex with respect to some weighted norm~\cite{allen2016even}. 

{\bf $\diamond$ Mini-batch methods.} We design several {\em new} importance samplings for mini-batches, calculate the associated complexity results, and show through experiments that they significantly outperform the standard uniform samplings used in practice and constitute the state of the art. Our importance sampling leads to rates which are provably within a small factor from the best known rates, but can lead to an improvement by a factor of $\cO(n)$. We are the first to establish such a result, both for \texttt{CD} (Appendix~\ref{sec:cd_imp}) and \texttt{ACD} (Section~\ref{sec:import}).

The key complexity results obtained in this paper are summarized and compared to prior results in Table~\ref{tab:main}.

\section{The algorithm} \label{sec:ACD}

The accelerated coordinate descent method (\texttt{ACD}) we propose is formalized as Algorithm~\ref{alg:acd}. As mentioned before, we will analyze our method under a more general strong convexity assumption.

\begin{assumption}\label{ass:sc}
$f$ is $\sigma_w$--convex with respect to the  $\|\cdot \|_{w}$ norm. That is, 
\begin{equation}\label{eq:sc}
f(x+h)\geq f(x) +\langle \nabla f(x),h\rangle +\frac{\sigma_w}{2}\|h\|_w^2,
\end{equation}
for all $x,h\in \R^n$, where $\sigma_w>0$.
\end{assumption}

Note that if $f$ is $\sigma$--convex in the standard sense (i.e., for $w=(1,\dots,1)$), then  $f$ is $\sigma_w$--convex for any $w>0$ with $\sigma_w = \min_i \frac{\sigma}{w_i}.$

\begin{algorithm}[H] 
\textbf{Input: }{i.i.d.\ proper samplings $S^k\sim \cD$; $v,w\in \R^n_{++}$; $\sigma_w>0$; stepsize parameters $\eta,\theta>0$}\\
\textbf{Initialize: }{Initial iterate $y^0=z^0\in \R^n $ }\\
\For {$k= 0,1\dots $} {
\begin{align}
&x^{k+1}=(1-\theta)y^{k}+\theta z^{k} \label{eq:x_update_acd}\\
&\text{Get } S^k \sim \cD
\\
 & y^{k+1}=x^{k+1}-\sum_{i\in S^k} \frac{1}{v_i} \nabla_if(x^{k+1}) e_i\label{eq:y_update}\\
  & z^{k+1}=\frac{1}{1+\eta\sigma_w}\left(z^k+\eta\sigma_w x^{k+1}-\sum_{i\in S^k}\frac{\eta}{p_i w_i } \nabla_i f(x^{k+1}) e_i \right) \label{eq:z_update}
 \end{align}
 }
\caption{\texttt{ACD} (Accelerated coordinate descent with arbitrary sampling)}
\label{alg:acd}
\end{algorithm}

Using the tricks developed in \cite{Lee2013, APPROX, lin2014accelerated}, Algorithm~\ref{alg:acd} can be implemented so that only $|S^k|$ coordinates are updated in each iteration. We are now ready derive a convergence rate of \texttt{ACD}.

\begin{theorem}[Convergence of \texttt{ACD}]\label{th:acd} Let $S^k$ be i.i.d.\ proper (but otherwise arbitrary) samplings. Let $\mP$ be the associated probability matrix and  $p_i\eqdef \Prob(i\in S^k)$.   Assume $f$ is $\mM$--smooth (see \eqref{eq:M-smooth-intro}) and let $v$ be ESO parameters satisfying \eqref{eq:v_def}.  Further, assume that $f$ is $\sigma_w$--convex (with $\sigma_w>0$) for
 \begin{equation}
w_i\eqdef \frac{v_i}{p_i^2}, \qquad i=1,2,\dots,n,\label{eq:w_def}
\end{equation}
with respect to the weighted Euclidean norm $\|\cdot\|_w$ (i.e., we enforce Assumption~\ref{ass:sc}).  Then  \begin{equation} \label{eq:998dgff}\sigma_w \leq \frac{\mM_{ii} p_i^2}{v_i} \leq p_i^2 
\leq 1, \qquad i=1,2,\dots n.\end{equation}
In particular, if $f$ is $\sigma$--convex with respect to the standard Euclidean norm, then  we can choose \begin{equation}\label{eq:8h8hs8s}\sigma_w = \min_i \frac{p_i^2 \sigma}{v_i}.\end{equation} 
Finally, if we choose
\begin{equation}\label{eq:tau_def_acd}
\theta\eqdef\frac{\sqrt{\sigma_w^2+4\sigma_w}-\sigma_w}{2}=\frac{2\sigma_w}{\sqrt{\sigma_w^2+4\sigma_w}+\sigma_w} \geq 0.618 \sqrt{\sigma_\omega}
\end{equation}
and 
$
\eta\eqdef \frac{1}{\theta},
$
then the random iterates of \texttt{ACD} satisfy
\begin{equation}\label{eq:recur}
\E{P^{k}}
\leq
(1-\theta)^kP^0,
\end{equation}
where $P^k\eqdef \frac{1}{\theta^2}\left( f(y^k)-f(x^*)\right)+\frac{1}{2(1-\theta)}\|z^k-x^* \|_w^2$ and $x^*$ is the optimal solution of \eqref{eq:problem}. 
\end{theorem}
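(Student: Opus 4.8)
The plan is to prove the one-step contraction $\E{P^{k+1}} \le (1-\theta)P^k$, where the expectation is conditional on all randomness up to iteration $k$, so that $x^{k+1}$ is deterministic and only the independent draw of $S^k$ is random; the full statement \eqref{eq:recur} then follows by the tower property, using that the samplings are i.i.d. The preliminary bounds \eqref{eq:998dgff} come first and are short: taking $h = t e_i$ in both the $\mM$-smoothness inequality \eqref{eq:M-smooth-intro} and the weighted strong convexity \eqref{eq:sc} and comparing the two quadratics in $t$ gives $\sigma_w w_i \le \mM_{ii}$, and substituting $w_i = v_i/p_i^2$ yields $\sigma_w \le \mM_{ii}p_i^2/v_i$; the middle inequality $\mM_{ii}\le v_i$ is the $i$th diagonal entry of the ESO inequality \eqref{eq:v_def} (since $\mP_{ii}=p_i$), and $p_i^2\le 1$ is immediate. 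The formula \eqref{eq:8h8hs8s} is exactly the remark following Assumption~\ref{ass:sc} applied to $w_i = v_i/p_i^2$. Before the main estimate I would record the two algebraic identities forced by the choice of $\theta$ in \eqref{eq:tau_def_acd}: squaring gives $\theta^2 = \sigma_w(1-\theta)$, and hence, with $\eta = 1/\theta$, that $1+\eta\sigma_w = 1/(1-\theta)$. These are what make the cross terms cancel at the end.

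The heart of the proof is two one-step estimates. First, a descent lemma for $y$: substituting the update \eqref{eq:y_update} into $\mM$-smoothness and taking expectation over $S^k$, the cross term and the quadratic combine, after applying $\mP\circ\mM \preceq \Diag{p\circ v}$ to the vector $g_i = \nabla_i f(x^{k+1})/v_i$, into the bound $\E{f(y^{k+1})} \le f(x^{k+1}) - \tfrac12\sum_i \tfrac{p_i}{v_i}(\nabla_i f(x^{k+1}))^2$. Second, an expansion for $z$: the update \eqref{eq:z_update} is the minimizer of a quadratic in $z$ that is $(1+\eta\sigma_w)$-strongly convex in $\|\cdot\|_w$ and whose linear part uses the unbiased stochastic gradient $\tilde g \eqdef \sum_{i\in S^k}\tfrac{1}{p_i}\nabla_i f(x^{k+1})e_i$ (so $\E{\tilde g}=\nabla f(x^{k+1})$). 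Applying the strong-convexity-of-the-minimizer inequality at $x^*$, using strong convexity of $f$ to cancel the $\tfrac{\eta\sigma_w}{2}\|x^{k+1}-x^*\|_w^2$ term, discarding a nonpositive square, and bounding the residual $\eta\langle\tilde g, x^{k+1}-z^{k+1}\rangle - \tfrac12\|z^{k+1}-z^k\|_w^2$ by its maximum over the free point gives a variance term $\tfrac{\eta^2}{2}\E{\sum_i \tilde g_i^2/w_i}$. The decisive point is that the weights $w_i = v_i/p_i^2$ make $\E{\tilde g_i^2/w_i} = \tfrac{p_i}{v_i}(\nabla_i f(x^{k+1}))^2$, so this variance term is exactly $\eta^2$ times the descent produced by the $y$-step.

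Finally I would combine the two estimates. Using the coupling $x^{k+1}=(1-\theta)y^k+\theta z^k$ to rewrite $x^{k+1}-z^k = \tfrac{1-\theta}{\theta}(y^k-x^{k+1})$ together with convexity $\langle\nabla f(x^{k+1}),y^k-x^{k+1}\rangle \le f(y^k)-f(x^{k+1})$, multiplying the $z$-estimate by $\tfrac{1}{(1-\theta)(1+\eta\sigma_w)}=1$ (this is where $1+\eta\sigma_w=1/(1-\theta)$ enters), and adding $\tfrac{1}{\theta^2}\E{f(y^{k+1})}-\tfrac{1}{\theta^2}f(x^*)$ to build $P^{k+1}$, the $\E{f(y^{k+1})}$ terms cancel, the $f(x^{k+1})$ terms cancel (their coefficient is $-\eta+1/\theta=0$), and the $f(x^*)$ terms collapse, leaving precisely $\E{P^{k+1}} \le \tfrac{1-\theta}{\theta^2}(f(y^k)-f(x^*)) + \tfrac12\|z^k-x^*\|_w^2 = (1-\theta)P^k$.

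The main obstacle I expect is the $z$-estimate: correctly identifying \eqref{eq:z_update} as a proximal minimization step in the weighted norm, controlling the residual inner product against $-\tfrac12\|z^{k+1}-z^k\|_w^2$, and verifying that the resulting variance term lands with exactly the constant that cancels the $y$-descent. Everything else is bookkeeping that is engineered to succeed by the identity $\theta^2=\sigma_w(1-\theta)$ and the weight choice $w_i=v_i/p_i^2$.
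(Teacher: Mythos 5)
Your plan follows the paper's proof essentially verbatim: the same derivation of \eqref{eq:998dgff} by comparing \eqref{eq:sc} and \eqref{eq:M-smooth-intro} at $h=e_i$ and reading off the diagonal of \eqref{eq:v_def}, the same descent lemma for the $y$-step, the same proximal/mirror analysis of the $z$-step (the paper phrases it via the first-order optimality condition of the quadratic $h^k$ plus the generalized Pythagorean identity, which is exactly your strong-convexity-of-the-minimizer inequality), the same observation that $w_i=v_i/p_i^2$ makes the variance term equal $\eta^2$ times the $y$-descent, and the same final coupling using $x^{k+1}-z^k=\tfrac{1-\theta}{\theta}(y^k-x^{k+1})$ together with $\theta^2=\sigma_w(1-\theta)$, i.e.\ $1+\eta\sigma_w=1/(1-\theta)$. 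The only quibble is that the residual you absorb against $-\tfrac12\|z^{k+1}-z^k\|_w^2$ via Young's inequality should be $\eta\langle \tilde g, z^k-z^{k+1}\rangle$ rather than $\eta\langle \tilde g, x^{k+1}-z^{k+1}\rangle$; this is a slip of notation that does not affect the argument.
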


 Noting that $1/0.618\leq 1.619$, as an immediate consequence of \eqref{eq:recur} and  \eqref{eq:tau_def_acd} we get bound
\begin{equation} \label{eq:ius98g9skkk} k \geq \frac{1.619}{\sqrt{\sigma_w}} \log \frac{1}{\epsilon} \quad \Rightarrow \quad \E{P^k} \leq \epsilon P^0.\end{equation}
 If $f$ is $\sigma$--convex, then by plugging \eqref{eq:8h8hs8s} into \eqref{eq:ius98g9skkk} we obtain the iteration complexity bound
\begin{equation}\label{eq:98g98df} 1.619 \cdot \sqrt{\max_i \frac{v_i}{p_i^2 \sigma}} \log \frac{1}{\epsilon}.\end{equation}
Complexity \eqref{eq:98g98df} is our key result (also  mentioned in \eqref{eq:ug98sg98s} and Table~\ref{tab:main}).

\section{Importance sampling for mini-batches \label{sec:import}}

Let $\tau\eqdef \E{|S^k|}$ be the expected mini-batch size. The next theorem provides an insightful lower bound for the complexity of \texttt{ACD} we established, one independent of  $p$ and $v$. 

\begin{theorem}[Limits of mini-batch performance] \label{thm:LB} Let the assumptions of Theorem~\ref{th:acd}  be satisfied and let $f$ be $\sigma$--convex. Then the dominant term in the rate~\eqref{eq:98g98df} of \texttt{ACD} admits the lower bound
\begin{equation}\label{eq:ineq-minibatch-speedup} \sqrt{\max_i \frac{v_i}{p_i^2 \sigma}}  \geq \frac{\sum_{i} \sqrt{\mM_{ii}}}{\tau \sqrt{\sigma}}. \end{equation}
\end{theorem}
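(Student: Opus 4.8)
The plan is to combine two facts. The first is the elementary identity $\tau = \sum_i p_i$: since $|S^k| = \sum_i \mathbf{1}[i\in S^k]$, linearity of expectation gives $\tau \eqdef \E{|S^k|} = \sum_i \Prob(i\in S^k) = \sum_i p_i$. The second is that the ESO parameters dominate the diagonal of the smoothness matrix, i.e.\ $v_i \geq \mM_{ii}$ for every $i$. This is precisely the diagonal content of the already-established chain \eqref{eq:998dgff}: the middle inequality $\mM_{ii}p_i^2/v_i \leq p_i^2$ rearranges to $\mM_{ii}\leq v_i$. (Equivalently, it can be read directly off the ESO inequality \eqref{eq:v_def} by comparing $(i,i)$ entries, since $(\mP\circ\mM)_{ii} = p_i\mM_{ii}$ while the diagonal of the right-hand side is $p_i v_i$.)

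The heart of the argument is a pointwise-to-sum comparison that pulls the maximum outside the sum. I would write $M\eqdef \max_i v_i/p_i^2$, which is the quantity under the square root on the left of \eqref{eq:ineq-minibatch-speedup} up to the $\sigma$ factor. By definition of the maximum, $v_i \leq M p_i^2$ for each $i$, and combining this with $\mM_{ii}\leq v_i$ yields $\mM_{ii}\leq M p_i^2$. Taking square roots produces the linear-in-$p_i$ bound $\sqrt{\mM_{ii}}\leq \sqrt{M}\,p_i$.

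Summing this inequality over all $i\in[n]$ and invoking $\sum_i p_i = \tau$ gives
\[
\sum_i \sqrt{\mM_{ii}} \;\leq\; \sqrt{M}\sum_i p_i \;=\; \tau\sqrt{M},
\]
so that $\sqrt{M} \geq \tfrac{1}{\tau}\sum_i \sqrt{\mM_{ii}}$. Dividing by $\sqrt{\sigma}$ then recovers exactly \eqref{eq:ineq-minibatch-speedup}.

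Because the decisive step is just the bound $\sqrt{\mM_{ii}}\leq \sqrt{M}\,p_i$ followed by a single summation, I do not expect any genuine technical obstacle. The only point that requires a little care is the grouping: the target \eqref{eq:ineq-minibatch-speedup}, once squared, features $\big(\sum_i\sqrt{\mM_{ii}}\big)^2$ against $\tau^2$, which might tempt one into a Cauchy--Schwarz estimate. The clean route avoids this entirely, since the per-coordinate bound is already \emph{linear} in $p_i$, and the correct identity to exploit is $\tau=\sum_i p_i$ (not, say, $\sum_i p_i^2$). Recognizing that the maximum can be extracted before summing is therefore the one conceptual move in an otherwise routine calculation.
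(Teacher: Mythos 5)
Your proof is correct, and it is genuinely more elementary than the one in the paper. Both arguments share the same final step: from a pointwise bound of the form $\sqrt{\mM_{ii}} \leq \sqrt{M}\, p_i$ one sums over $i$ and uses $\sum_i p_i = \tau$ (the paper phrases this as identifying the minimizer of $\max_i \mM_{ii}/p_i^2$ over the set $\sum_i p_i=\tau$, $p>0$, but the underlying computation is identical). Where you diverge is in how you relate $\max_i v_i/p_i^2$ to the diagonal of $\mM$. The paper first invokes a reduction lemma (Lemma~\ref{lem:b7f8gf8}) to assume without loss of generality that $v=cp^2$ with $c=\max_i v_i/p_i^2$, and then lower-bounds $c$ by $\lambda_{\max}(\mP'\circ\mM')$ and in turn by the largest diagonal entry $\max_i \mM_{ii}/p_i^2$. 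You bypass all of this by reading $v_i\geq \mM_{ii}$ directly off the diagonal of the ESO inequality \eqref{eq:v_def} (equivalently, off the middle inequality in \eqref{eq:998dgff}, which the paper has already established), which immediately gives $\mM_{ii}\leq v_i\leq M p_i^2$. Your route is shorter and self-contained; the paper's heavier detour is not wasted in context, since the quantity $c(S,\mM)=\lambda_{\max}(\mP'\circ\mM')$ and the bound $c\geq c(S,\mM)$ are reused in Lemma~\ref{thm:special-ESO-result} and in the subsequent comparison of samplings, but none of that machinery is needed for Theorem~\ref{thm:LB} itself.
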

Note that for $\tau=1$ we have $\mM_{ii}=v_i=L_i$, and the lower bound is achieved by using the importance sampling $p_i\propto \sqrt{L_i}$.
Hence, this bound gives a  limit on how much speedup, compared to the best known complexity in the  $\tau=1$ case, we can hope for  as we increase $\tau$. The bound says we can not hope for better than linear speedup in the mini-batch size.  An analogous result (obtained by removing all the squares and square roots in \eqref{eq:ineq-minibatch-speedup}) was established in \cite{NSync} for  \texttt{CD}.


In what follows, it will be useful to write the complexity result \eqref{eq:98g98df} in a new form by considering a specific choice of the ESO vector $v$.

\begin{lemma}\label{thm:special-ESO-result} Choose any proper sampling $S$ and let $\mP$ be its probability matrix and $p$ its probability vector. Let $c(S,\mM) \eqdef \lambda_{\max}(\mP'\circ \mM')$, where $\mP' \eqdef \mD^{-1/2} \mP \mD^{-1/2}$, $\mM' \eqdef \mD^{-1}\mM \mD^{-1}$ and $\mD \eqdef \Diag{p}$. Then the vector $v$ defined by $v_i = c(S,\mM) p_i^2$ satisfies  the ESO inequality \eqref{eq:v_def} and the total complexity~\eqref{eq:98g98df} becomes
\begin{equation}\label{eq:98g98df2} 
1.619 \cdot \frac{\sqrt{c(S,\mM)}}{\sqrt{\sigma}} \log \frac{1}{\epsilon}.
\end{equation}
\end{lemma}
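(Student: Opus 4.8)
The plan is to prove the ESO claim by a single diagonal congruence (scaling) that converts the spectral bound defining $c(S,\mM)$ into the Loewner bound \eqref{eq:v_def}, and then to read off the complexity by direct substitution. Throughout I use that $\mP$ is symmetric (since $\mP_{ij}=\Prob(i\in S\,\&\,j\in S)$ is symmetric in $i,j$) and $\mM$ is symmetric positive semidefinite, so $\mP'\circ\mM'$ is symmetric and $\lambda_{\max}(\mP'\circ\mM')$ together with the ordering $\preceq$ are meaningful.

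First I would compute the entries of $\mP'\circ\mM'$. Since $\mD=\Diag{p}$ is diagonal with $p_i>0$ (as $S$ is proper), we have $(\mP')_{ij}=\mP_{ij}/\sqrt{p_ip_j}$ and $(\mM')_{ij}=\mM_{ij}/(p_ip_j)$, hence
\[
(\mP'\circ\mM')_{ij}=\frac{\mP_{ij}\,\mM_{ij}}{(p_ip_j)^{3/2}}.
\]
Because conjugation by a diagonal matrix interacts with the Hadamard product through its entries, namely $(\mD^{a}A\mD^{a})\circ(\mD^{b}B\mD^{b})=\mD^{a+b}(A\circ B)\mD^{a+b}$ for diagonal $\mD$, the half-power $\mD^{-1/2}$ on each side of $\mP$ and the full power $\mD^{-1}$ on each side of $\mM$ combine to the single exponent $-3/2$, giving the compact identity
\[
\mP'\circ\mM'=\mD^{-3/2}(\mP\circ\mM)\mD^{-3/2}.
\]

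Next, by the definition $c(S,\mM)=\lambda_{\max}(\mP'\circ\mM')$ we have the spectral bound $\mP'\circ\mM'\preceq c(S,\mM)\,\mI$. Conjugating both sides by $\mD^{3/2}$ — a positive definite matrix, so that the congruence preserves the Loewner order — and using the identity above to undo the scaling on the left yields
\[
\mP\circ\mM=\mD^{3/2}(\mP'\circ\mM')\mD^{3/2}\preceq c(S,\mM)\,\mD^{3}=\Diag{c(S,\mM)p_1^3,\dots,c(S,\mM)p_n^3}.
\]
Recognizing $c(S,\mM)p_i^3=p_i\big(c(S,\mM)p_i^2\big)=p_iv_i$ for $v_i=c(S,\mM)p_i^2$, the right-hand side is precisely $\Diag{p\circ v}$, which is the ESO inequality \eqref{eq:v_def}. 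Finally, the complexity follows by substitution: for this $v$ we have $v_i/p_i^2=c(S,\mM)$ for every $i$, so $\max_i v_i/(p_i^2\sigma)=c(S,\mM)/\sigma$, and plugging into \eqref{eq:98g98df} gives $1.619\cdot\sqrt{c(S,\mM)}/\sqrt{\sigma}\cdot\log(1/\epsilon)$, as claimed.

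The computation is essentially bookkeeping, so there is no deep obstacle; the two points requiring care are the tracking of the scaling exponents (that the $\mD^{-1/2}$ pair and the $\mD^{-1}$ pair merge to $\mD^{-3/2}$, making $v_i=c(S,\mM)p_i^2$ the correct choice rather than, say, $c(S,\mM)p_i$), and the observation that properness $p_i>0$ is exactly what makes $\mD^{3/2}$ invertible, so the congruence is an equivalence of Loewner inequalities rather than a one-directional implication.
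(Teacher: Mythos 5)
Your proof is correct and follows essentially the same route as the paper: both rest on the diagonal--Hadamard identity $\mD^{3/2}(\mP'\circ\mM')\mD^{3/2}=\mP\circ\mM$ (the paper's Lemma~\ref{lem:bgfd78gd8}) combined with the congruence of $\mP'\circ\mM'\preceq c(S,\mM)\,\mI$ by $\mD^{3/2}$, followed by direct substitution of $v_i/p_i^2=c(S,\mM)$ into \eqref{eq:98g98df}. The only cosmetic difference is that you argue from the spectral bound toward $\Diag{p\circ v}$ while the paper writes the same chain in the reverse order.
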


Since $\tfrac{1}{n}\trace{\mP'\circ \mM'}\leq c(S,\mM) \leq \trace{\mP'\circ \mM'}$ and $\trace{\mP'\circ \mM'}=\sum_i \mP'_{ii} \mM'_{ii} = \sum_i  \mM'_{ii} =\sum_i \mM_{ii}/p_i^2$, we get the bounds:
\begin{equation}\label{eq:nbisg8dd}   \sqrt{\frac{1}{n}\sum_i \frac{\mM_{ii}}{p_i^2\sigma}} \log \frac{1}{\epsilon}\leq  \sqrt{\frac{c(S,\mM)}{\sigma}} \log \frac{1}{\epsilon}\leq   \sqrt{\sum_i \frac{\mM_{ii}}{p_i^2\sigma}} \log \frac{1}{\epsilon}.\end{equation}

\subsection{Sampling 1: standard uniform minibatch samlpling ($\tau$--nice sampling)\label{sec:sam1}} Let $S_1$ be the $\tau$-nice sampling. It can be shown (see Lemma~\ref{lem:tau-nice-2nd-derivation}) that $c(S_1,\mM) \leq\frac{n^2}{\tau^2} ((1-\beta)\max_i \mM_{ii} + \beta L)$, and  hence the iteration complexity \eqref{eq:98g98df} becomes \begin{equation}\label{eq:ug98sg98s-tau-nice}\cO\left( \frac{n}{\tau}\sqrt{ \frac{(1-\beta) \max_i \mM_{ii} + \beta L}{ \sigma} }   \log \frac{1}{\epsilon}\right).\end{equation}
This result interpolates between \texttt{ACD} with uniform probabilities (for $\tau=1$) and accelerated gradient descent (for $\tau=n$). Note that the rate \eqref{eq:ug98sg98s-tau-nice} is a strict improvement on the \texttt{CD} rate  \eqref{eq:tau-nice-rate}.

\subsection{Sampling 2: importance sampling for minibatches} \label{sec:sam2}
Consider now the sampling $S_2$ which  includes every $i \in [n]$ in $S_2$, independently, with probability \[p_i = \tau \frac{\sqrt{\mM_{ii}}}{\sum_j \sqrt{\mM_{jj}}}.\] This sampling was not considered in the literature before. Note that $\E{|S_2|}=\sum_i p_i = \tau$. For this sampling, bounds \eqref{eq:nbisg8dd}  become:
\begin{equation}\label{eq:LB--x}  \frac{\sum_{i} \sqrt{\mM_{ii}}}{\tau \sqrt{\sigma}}\log \frac{1}{\epsilon}\leq  \sqrt{\frac{c(S,\mM)}{\sigma}} \log \frac{1}{\epsilon}\leq   \frac{\sqrt{n}\sum_{i} \sqrt{\mM_{ii}}}{\tau \sqrt{\sigma}} \log \frac{1}{\epsilon}.\end{equation}
Clearly, with this sampling we obtain an \texttt{ACD} method with complexity within a $\sqrt{n}$ factor from the lower bound established in Theorem~\ref{thm:LB}. For $\tau=1$ we have  $\mP'=\mI$ and hence $c(S,\mM)=\lambda_{\max}(\mI\circ \mM') = \lambda_{\max}(\Diag{\mM'}) = \max_i \mM_{ii}/p_i^2 = (\sum_j \sqrt{\mM_{jj}})^2$. Thus, the rate of \texttt{ACD} achieves the lower bound in \eqref{eq:LB--x} (see also \eqref{eq:ZAZ}) and we recover the best current rate of \texttt{ACD} in the $\tau=1$ case, established in \cite{allen2016even}. However, the sampling has an important limitation: it can be used for $\tau\leq \sum_j \sqrt{\mM_{jj}}/\max_i \mM_{ii}$ only  as otherwise the probabilities $p_i$ exceed 1.

\subsection{Sampling 3: another importance sampling for minibatches} \label{sec:sam3}
Now consider sampling $S_3$ which includes each coordinate $i$ within $S_3$ independently, with probability $p_i$ satisfying the relation $p_i^2/\mM_{ii} \propto 1-p_i$. This is equivalent to setting
\begin{equation}\label{eq:p_imp_def_acc1}
p_{i}\eqdef\frac{2 \mM_{ii}}{\sqrt{\mM_{ii}^2+2\mM_{ii}\delta}+\mM_{ii}},
\end{equation}
where $\delta$ is a scalar for which $\sum_i p_i=\tau$. This sampling was not considered in the literature before. Probability vector $p$ was chosen as~\eqref{eq:p_imp_def_acc1} for two reasons: i) $p_i\leq 1$ for all $i$, and therefore the sampling can be used for all $\tau$ in contrast to $S_1$, and ii) we can prove Theorem~\ref{thm:comparison}. 


Let $c_1\eqdef c(S_1,\mM)$ and $c_3\eqdef c(S_3,\mM)$. In light of \eqref{eq:98g98df2}, Theorem~\ref{thm:comparison} compares $S_1$ and $S_3$ and says that \texttt{ACD} with $S_3$ has at most $\cO(\sqrt{\tau})$ times worse rate compared to \texttt{ACD} with $S_1$, but has the capacity to be $\cO(n/\tau)$ times better. We prove in Appendix~\ref{sec:cd_imp} a similar theorem for \texttt{CD}. We stress that, despite some advances in the development of importance samplings for minibatch methods~\cite{NSync, csiba2018importance}, $S_1$ was until now the state-of-the-art in theory for \texttt{CD}. We are the first to give a provably better rate in the sense of Theorem~\ref{thm:nonacc_comp}. The numerical experiments show that $S_3$ consistently outperforms $S_1$, and often dramatically so.

\begin{theorem}\label{thm:comparison} The leading complexity terms $c_1$ and $c_3$ of \texttt{ACD} (Algorithm~\eqref{eq:Parallel-CD-intro}) with samplings $S_1$, and $S_3$, respectively, defined in Lemma~\ref{thm:special-ESO-result}, compare as follows:
 \begin{equation}\label{eq:thm_comparison}
  c_3 \leq 2\frac{(2n-\tau)(n\tau+n-\tau)}{(n-\tau)^2}   c_1 =\cO(\tau) c_1.
 \end{equation}
Moreover, there exists $\mM$ where $c_3\leq \cO(\tfrac{\tau^2}{n^2})c_1$. 
\end{theorem}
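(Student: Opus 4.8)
I plan to work entirely with the reformulation \eqref{eq:98g98df2} from Lemma~\ref{thm:special-ESO-result}, so that proving \eqref{eq:thm_comparison} reduces to comparing the scalars $c_1=c(S_1,\mM)$ and $c_3=c(S_3,\mM)$. The starting point is an \emph{exact} formula for $c_3$. Since $S_3$ is an independent sampling, its probability matrix satisfies $\mP_{ij}=p_ip_j$ for $i\neq j$ and $\mP_{ii}=p_i$, so a direct computation of $\mP'\circ\mM'$ (with $\mD=\Diag{p}$) gives off-diagonal entries $\mM_{ij}/\sqrt{p_ip_j}$ and diagonal entries $\mM_{ii}/p_i^2$. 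The defining relation \eqref{eq:p_imp_def_acc1}, rewritten as $p_i^2=2\mM_{ii}(1-p_i)/\delta$, is precisely what makes the diagonal correction a multiple of the identity: $\mM_{ii}/p_i^2-\mM_{ii}/p_i=\delta/2$ for every $i$. Hence $\mP'\circ\mM'=\mD^{-1/2}\mM\mD^{-1/2}+\tfrac{\delta}{2}\mI$ and therefore $c_3=\lambda_{\max}(\mD^{-1/2}\mM\mD^{-1/2})+\tfrac{\delta}{2}$. This splitting into a ``spectral'' part and a scalar shift is the backbone of the argument.

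Next I would bound each part by $c_1$. For the shift, summing $p_i^2\delta=2\mM_{ii}(1-p_i)$ over $i$ gives $\tfrac{\delta}{2}=\sum_i\mM_{ii}(1-p_i)/\sum_i p_i^2\le\sum_i\mM_{ii}/(\tau^2/n)$, using $p_i\le1$ and $\sum_ip_i^2\ge(\sum_ip_i)^2/n=\tau^2/n$; combined with the lower bound $c_1\ge\tfrac1n\sum_i\mM_{ii}/p_i^2=\tfrac{n}{\tau^2}\sum_i\mM_{ii}$ (the left inequality of \eqref{eq:nbisg8dd} applied to $S_1$, for which $p_i=\tau/n$), this yields $\tfrac{\delta}{2}\le c_1$. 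For the spectral part I would use $\lambda_{\max}(\mD^{-1/2}\mM\mD^{-1/2})\le\traceOp(\mD^{-1/2}\mM\mD^{-1/2})=\sum_i\mM_{ii}/p_i$ (valid since the matrix is positive semidefinite), and then exploit the algebraic identity $\tfrac{p_i}{1-p_i}=\tfrac{p_i^2}{1-p_i}+p_i$ together with $\mM_{ii}/p_i=\tfrac{\delta}{2}\cdot\tfrac{p_i}{1-p_i}$ to obtain the exact identity $\sum_i\mM_{ii}/p_i=\sum_i\mM_{ii}+\tfrac{\delta\tau}{2}$. Putting these together,
\[
c_3\le\sum_i\mM_{ii}+\tfrac{\delta}{2}(\tau+1)\le\tfrac{\tau^2}{n}\,c_1+(\tau+1)\,c_1=\cO(\tau)\,c_1,
\]
using $\sum_i\mM_{ii}\le\tfrac{\tau^2}{n}c_1$ and $\tfrac{\delta}{2}\le c_1$. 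To recover the precise constant $2(2n-\tau)(n\tau+n-\tau)/(n-\tau)^2$ stated in the theorem, I would replace the crude lower bound $c_1\ge\tfrac{n}{\tau^2}\sum_i\mM_{ii}$ by the sharper $\tau$-nice bounds coming from $c_1=\tfrac{n^2}{\tau^2}\lambda_{\max}((1-\beta)\Diag{\mM_{ii}}+\beta\mM)$, e.g. $c_1\ge\tfrac{n^2}{\tau^2}(1-\beta)\max_i\mM_{ii}$ with $1-\beta=(n-\tau)/(n-1)$; it is exactly the factor $1-\beta$ that produces the $(n-\tau)$ terms in the denominator.

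For the second statement I would exhibit an explicit $\mM$. Taking $\mM$ diagonal kills all off-diagonal contributions, so $c_1=\tfrac{n^2}{\tau^2}\max_i\mM_{ii}$ and $c_3=\max_i\mM_{ii}/p_i^2$, whence $c_3/c_1=\tau^2/(n^2p_{\max}^2)$, where $p_{\max}$ is the probability of the heaviest coordinate. It then suffices to choose the diagonal so that a single coordinate carries $p_{\max}=\Theta(1)$ while the remaining $n-1$ coordinates share the residual mass $\tau-p_{\max}$ equally; solving \eqref{eq:p_imp_def_acc1} backwards fixes the corresponding $\mM_{ii}$, and one reads off $c_3/c_1=\cO(\tau^2/n^2)$.

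The main obstacle is the spectral term $\lambda_{\max}(\mD^{-1/2}\mM\mD^{-1/2})$: naive estimates such as $L/\min_i p_i$ blow up because $S_3$ assigns tiny probabilities to light coordinates. The resolution is the combination of the trace bound with the exact identity $\sum_i\mM_{ii}/p_i=\sum_i\mM_{ii}+\delta\tau/2$, which converts the spectral part into two quantities, $\sum_i\mM_{ii}$ and $\delta$, each independently controllable by $c_1$; the remaining work is bookkeeping to track the precise stated constant.
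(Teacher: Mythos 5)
Your proposal is correct, and after the shared opening move it follows a genuinely different route from the paper. Both arguments begin from the same exact decomposition: the paper's identity \eqref{eq:cs_simple} for independent samplings is precisely your $\mP'\circ\mM'=\mD^{-1/2}\mM\mD^{-1/2}+\Diag{\mM_{ii}(1-p_i)/p_i^2}$, and the defining relation $p_i^2/\mM_{ii}\propto 1-p_i$ makes the correction equal to $\tfrac{\delta}{2}\mI$ in both cases. From there the paths diverge. The paper routes the comparison through the intermediate independent \emph{uniform} sampling $S_2$: it first proves Lemma~\ref{lem:import_main} (mimicking part (i) of Theorem~\ref{thm:nonacc_comp}) to trade the $(\mD_3^{-1}-\mI)$ correction for a $\tfrac{n-\tau}{\tau}\Diag{\mM'_3}$ correction at the cost of $\tfrac{2n-\tau}{n-\tau}$, then applies $\mQ\preceq n\Diag{\mQ}$, a monotonicity argument relating $\Diag{\mM_3'}$ to $\Diag{\mM}$, and finally the chain $c_3\lesssim c_2\leq 2c_1$. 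You instead bound the spectral part by its trace and exploit the exact identity $\sum_i\mM_{ii}/p_i=\trace{\mM}+\tfrac{\delta\tau}{2}$ together with $\sum_ip_i^2\geq\tau^2/n$ and the trace lower bound $c_1\geq\tfrac{n}{\tau^2}\trace{\mM}$ from \eqref{eq:nbisg8dd}; this avoids $S_2$ and Lemma~\ref{lem:import_main} entirely and yields $c_3\leq\bigl(\tfrac{\tau^2}{n}+\tau+1\bigr)c_1$. This constant is in fact \emph{smaller} than the paper's $2(2n-\tau)(n\tau+n-\tau)/(n-\tau)^2$ for all $1\leq\tau<n$ (the latter is at least $\tfrac{2n\tau}{n-\tau}+2\geq 2\tau+2$, while your constant is at most $2\tau+1$), so your bound implies the stated inequality without the extra sharpening you sketch at the end, and it has the added virtue of not degenerating as $\tau\to n$. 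Your diagonal example for the $\cO(\tau^2/n^2)$ claim is the same construction as the paper's Example~\ref{ex:acc_imp_diff} (one dominant diagonal entry forcing $p_{\max}=\Theta(1)$), and your reduction $c_3/c_1=\tau^2/(n^2p_{\max}^2)$ is valid because $\mM_{ii}/p_i^2=\tfrac{\delta}{2(1-p_i)}$ is monotone in $\mM_{ii}$, so the maximum is attained at the heaviest coordinate.
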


In real world applications,  minibatch size $\tau$ is limited by  hardware and in typical situations one has  $\tau \ll n$, oftentimes $\tau=\cO(1)$. The importance of Theorem~\ref{thm:comparison} is best understood from this  perspective.

\begin{table}[t]
\centering
\begin{tabular}{|c|c|c|c|c|}
\hline
 Lower bound & $S_1$: $p_i = \tfrac{\tau}{n}$  & $S_2: \tfrac{p_i^2}{\mM_{ii}}\propto 1$ & $S_3: \tfrac{p_i^2}{\mM_{ii}} \propto 1-p_i$ \\
\hline
 $\tfrac{\sum_i \sqrt{\mM_{ii}}}{\tau \sqrt{\sigma}}$  & $  \tfrac{n\sqrt{(1-\beta) \max_i \mM_{ii} + \beta L}}{ \tau\sqrt{\sigma}}   $ &  $\tfrac{\gamma\sum_i \sqrt{\mM_{ii}}}{\tau \sqrt{\sigma}}$ & $\omega  \tfrac{n\sqrt{(1-\beta) \max_i \mM_{ii} + \beta L}}{ \tau\sqrt{\sigma}} $ \\
\hline
\eqref{eq:ineq-minibatch-speedup} & 
\begin{tabular}{c} = uniform \texttt{ACD} for $\tau=1$ \\
= \texttt{AGD}  for $\tau=n$ 
\end{tabular} 
& 
\begin{tabular}{c} $\leq \sqrt{n} \times $  lower bound \\ $\bullet$ $\tau\leq \tfrac{\sum_j \sqrt{\mM_{jj}}}{\max_i \mM_{ii}}$ \end{tabular} & 
\begin{tabular}{c} $\bullet$ fastest in practice \\ $\bullet$ any $\tau$ allowed \end{tabular} 
 \\
\hline
\end{tabular}
\caption{New complexity results for  \texttt{ACD}  with mini-batch size $\tau=\E{|S^k|}$ and various samplings (we suppress $\log (1/\epsilon)$ factors in all expressions).  Constants: $\sigma=$ strong convexity constant of $f$, $L=\lambda_{\max}(\mM)$, $\beta = (\tau-1)/(n-1)$, $1\leq \gamma \leq \sqrt{n}$,  and $\omega \leq \cO(\sqrt{\tau})$ ($\omega$ can be as small as $\cO(\tau/n)$).}
\label{tab:main2}
\end{table}

\section{Experiments} \label{sec:exp}

We perform extensive numerical experiments to justify that minibatch \texttt{ACD} with importance sampling works well in practice. Here we present a few selected experiment only; more can be found in Appendix~\ref{sec:extra_exp}. 

In most of plots we compare of both accelerated and non-accelerated \texttt{CD} with all samplings $S_1,S_2,S_3$ introduced in Sections~\ref{sec:sam1}, \ref{sec:sam2} and \ref{sec:sam3} respectively. We refer to \texttt{ACD} with sampling $S_3$ as \texttt{AN} (\texttt{A}ccelerated \texttt{N}onuniform), \texttt{ACD} with sampling $S_1$ as \texttt{AU}, \texttt{ACD} with sampling $S_2$ as \texttt{AN2}, \texttt{CD} with sampling $S_3$ as \texttt{NN}, \texttt{CD} with sampling $S_1$as \texttt{NU} and \texttt{CD} with sampling $S_2$ as \texttt{NN2}. We compare the methods for various choices of the expected minibatch sizes $\tau$ and on several problems.

In Figure~\ref{fig:logreg_big}, we report on a logistic regression problem with a few selected LibSVM~\cite{chang2011libsvm} datasets.  For larger datasets, pre-computing both strong convexity parameter $\sigma$ and  $v$ may be expensive (however, recall that for $v$ we need to tune only one scalar). Therefore, we choose ESO parameters $v$ from Lemma~\ref{thm:special-ESO-result}, while estimating the smoothness matrix as $10\times$ its diagonal. An estimate of the strong convexity $\sigma$ for acceleration was chosen to be the minimal diagonal element of the smoothness matrix. We provide a formal formulation of the logistic regression problem, along with  more experiments applied to further datasets in Appendix~\ref{exp:logreg}, where we choose $v$ and $\sigma$ in full accord with the theory.

\begin{figure}[t]
\centering
\begin{minipage}{0.25\textwidth}
  \centering
\includegraphics[width =  \textwidth ]{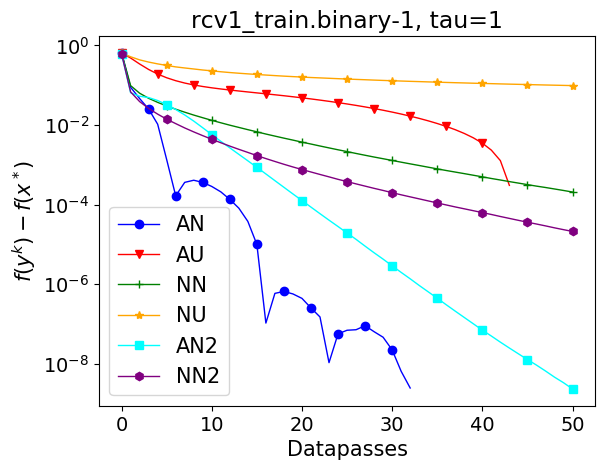}
\end{minipage}%
\begin{minipage}{0.25\textwidth}
  \centering
\includegraphics[width =  \textwidth ]{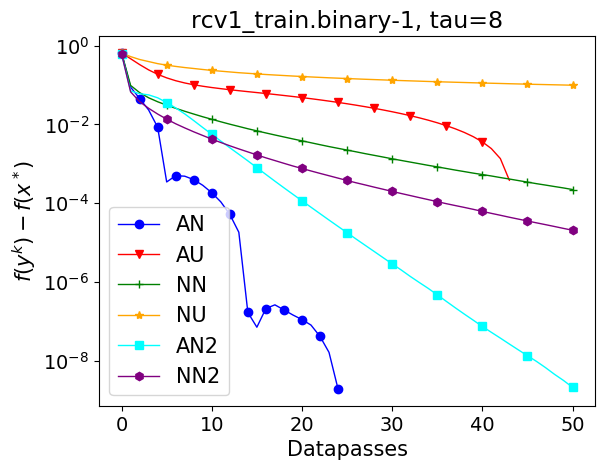}
\end{minipage}%
\begin{minipage}{0.25\textwidth}
  \centering
\includegraphics[width =  \textwidth ]{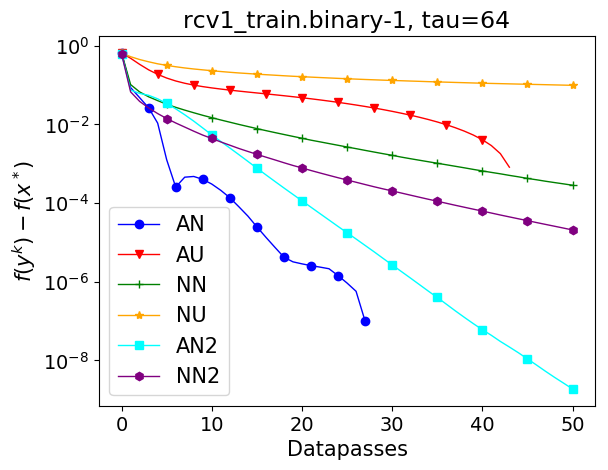}
\end{minipage}%
\begin{minipage}{0.25\textwidth}
  \centering
\includegraphics[width =  \textwidth ]{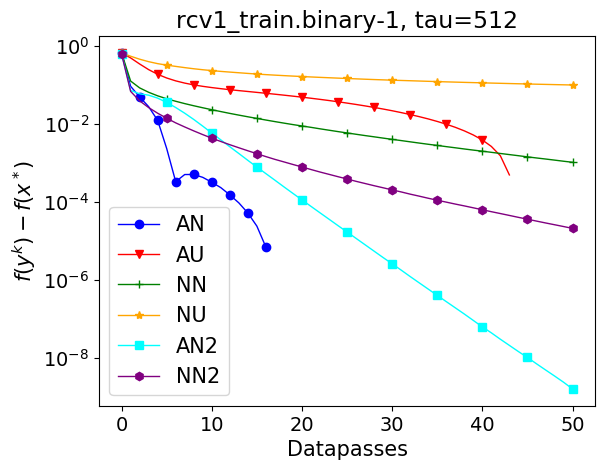}
\end{minipage}%
\\
\begin{minipage}{0.25\textwidth}
  \centering
\includegraphics[width =  \textwidth ]{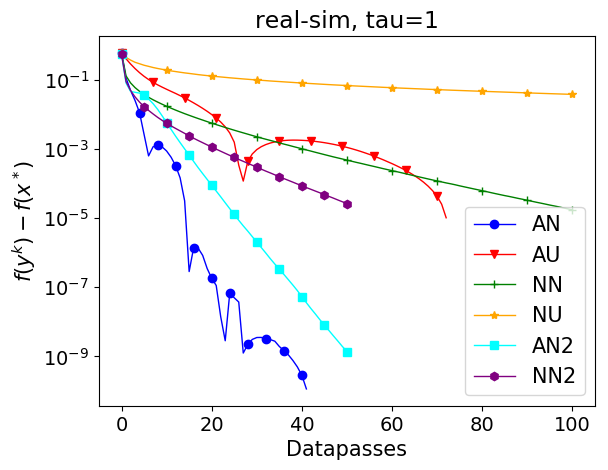}
\end{minipage}%
\begin{minipage}{0.25\textwidth}
  \centering
\includegraphics[width =  \textwidth ]{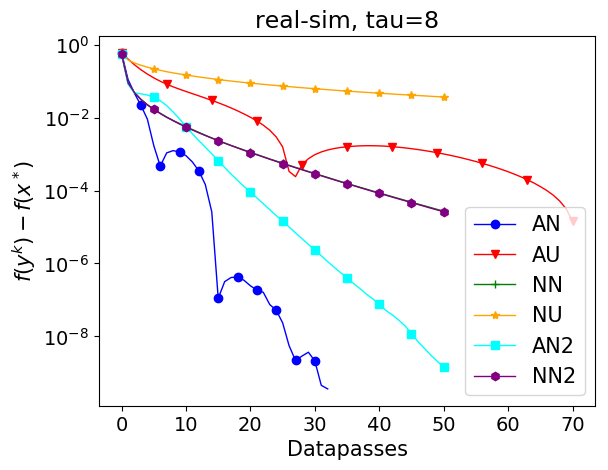}
\end{minipage}%
\begin{minipage}{0.25\textwidth}
  \centering
\includegraphics[width =  \textwidth ]{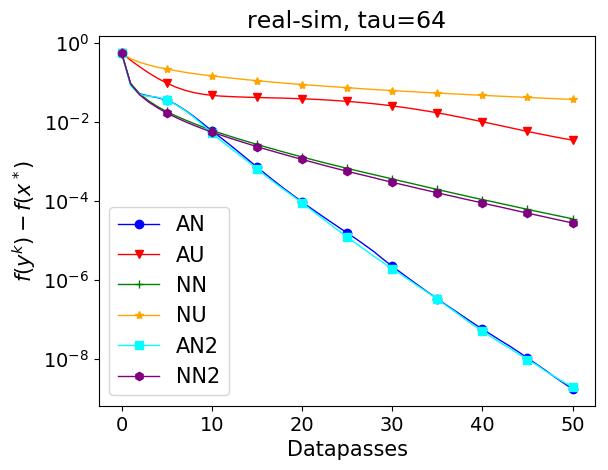}
\end{minipage}%
\begin{minipage}{0.25\textwidth}
  \centering
\includegraphics[width =  \textwidth ]{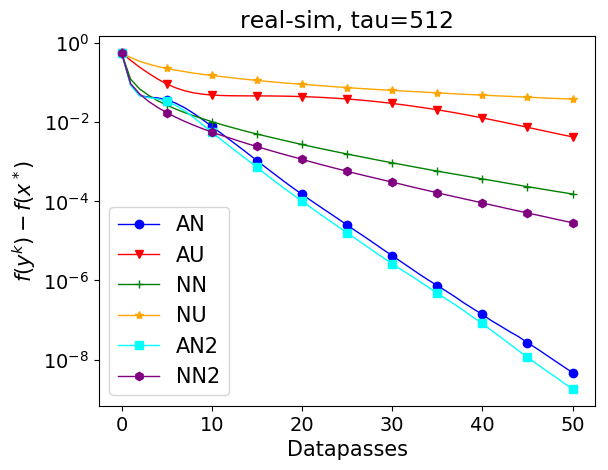}
\end{minipage}%
\\
\begin{minipage}{0.25\textwidth}
  \centering
\includegraphics[width =  \textwidth ]{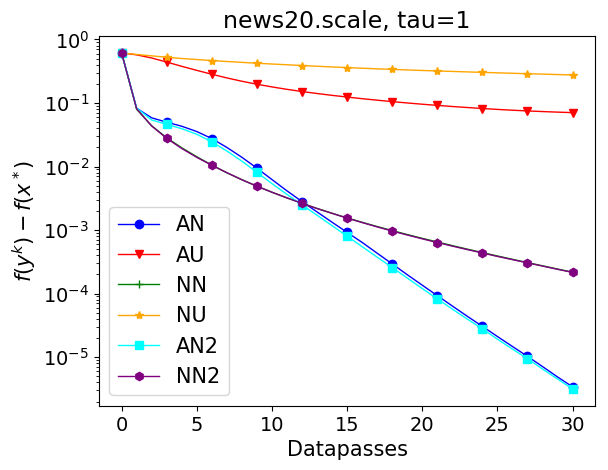}
\end{minipage}%
\begin{minipage}{0.25\textwidth}
  \centering
\includegraphics[width =  \textwidth ]{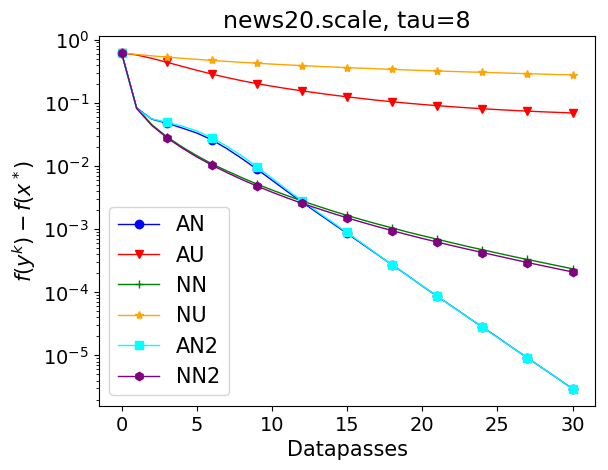}
\end{minipage}%
\begin{minipage}{0.25\textwidth}
  \centering
\includegraphics[width =  \textwidth ]{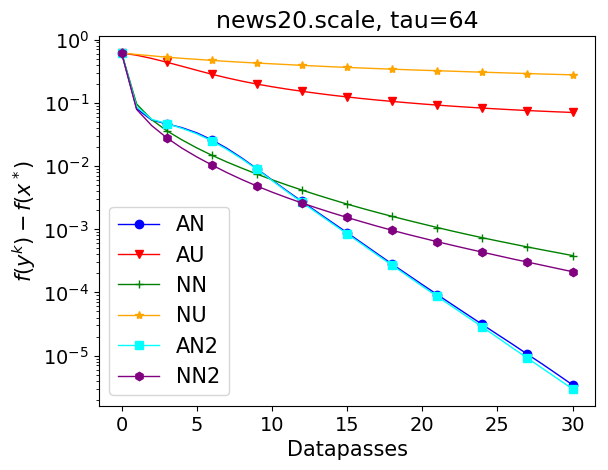}
\end{minipage}%
\begin{minipage}{0.25\textwidth}
  \centering
\includegraphics[width =  \textwidth ]{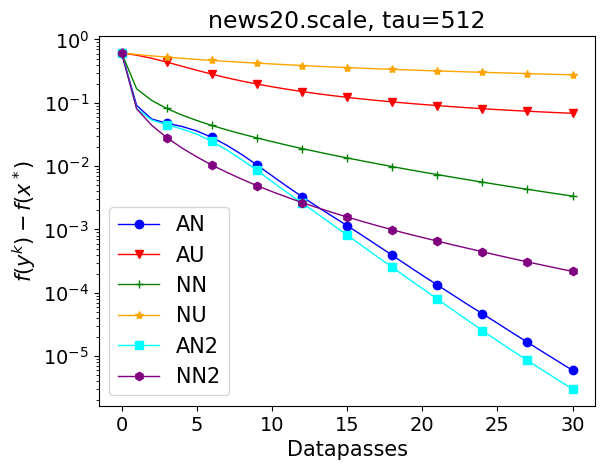}
\end{minipage}%
\caption{Six variants of coordinate descent (\texttt{AN}, \texttt{AU}, \texttt{NN}, \texttt{NU}, \texttt{AN2} and \texttt{AU2})  applied to a logistic regression problem, with minibatch sizes $\tau=1, 8, 64$ and $512$.}\label{fig:logreg_big}
\end{figure}

Coordinate descent methods which allow for separable proximal operator were proven to be efficient to solve ERM problem, when applied on dual~\cite{shalev2011stochastic,SDCA,shalev2014accelerated,zhao2015stochastic}.
Although we do not develop proximal methods in this paper, we empirically demonstrate that \texttt{ACD} allows for this extension as well. As a specific problem to solve, we choose dual of SVM with hinge loss. The results and a detailed description of the experiment are presented in Appendix~\ref{sec:SVM}, and are indeed in favour of \texttt{ACD} with importance sampling. Therefore, \texttt{ACD} is not only suitable for \emph{big dimensional} problems, it can handle the \emph{big data} setting as well.

Finally, in Appendix~\ref{sec:artif} we present several synthetic examples in order to shed more light on  acceleration and importance sampling, and to see how its performance depends on the  data. We also study how minibatch size influences the convergence rate. All the experimental results clearly show that  acceleration, importance sampling and minibatching have a significant impact on practical performance of \texttt{CD} methods. Moreover, the difference in the performance of samplings $S_2$ and $S_3$ is negligible, and therefore we recommend using $S_3$, as it is not limited by the bound on expected minibatch sice $\tau$


 \bibliographystyle{plain} 
\bibliography{literature}

\clearpage
\appendix
\part*{Appendix}

\section{Proof of Theorem~\ref{th:acd}}
Before starting the proof, we mention that the proof technique we use is inspired by~\cite{allen2014linear,allen2016even}, which takes the advantage of the coupling of gradient descent with mirror descent, resulting in a relatively simple proof.

\subsection{Proof of inequality \eqref{eq:998dgff}} \label{subsec:sigma_w<=1}

By comparing \eqref{eq:sc} and \eqref{eq:M-smooth-intro} for $h=e_i$, we get  $\sigma_w w_i \leq \mM_{ii}$, and the first inequality in \eqref{eq:998dgff} follows. Using \eqref{eq:v_def} it follows that  $e_i^\top (\mP \circ \mM) e_i \preceq e_i^\top \Diag{p\circ v} e_i$, which in turn implies $\mM_{ii} \leq v_i$ and the second inequality in \eqref{eq:998dgff} follows.

\subsection{Descent lemma}

The following lemma is a consequence of $\mM$--smoothness of $f$, and ESO inequality~\eqref{eq:v_def}.

\begin{lemma} Under the assumptions of Theorem~\ref{th:acd}, for all $k\geq 0$ we have the bound
\begin{equation}
\label{eq:eso_inq}
f(x^{k+1})-\E{f(y^{k+1})\,|\, x^{k+1}} \geq  \frac12 \| \nabla f(x^{k+1})\|^2_{v^{-1}\circ p}.
\end{equation}
\end{lemma}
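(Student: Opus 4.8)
The plan is to apply the $\mM$--smoothness inequality \eqref{eq:M-smooth-intro} to the gradient step \eqref{eq:y_update} and then take the conditional expectation over the random sampling $S^k$, given $x^{k+1}$. Write $g\eqdef \nabla f(x^{k+1})$ and let $h\eqdef y^{k+1}-x^{k+1}=-\sum_{i\in S^k}\tfrac{1}{v_i}g_i e_i$, so that $h_i=-\tfrac{1}{v_i}g_i \mathbf{1}_{(i\in S^k)}$. First I would invoke \eqref{eq:M-smooth-intro} with this $h$ to obtain
\begin{equation}
f(y^{k+1})\leq f(x^{k+1})+g^\top h+\tfrac12 h^\top \mM h,
\end{equation}
and then bound the conditional expectation of the linear and quadratic terms on the right-hand side separately, using $\Prob(i\in S^k)=p_i$ and $\Prob(i\in S^k\ \&\ j\in S^k)=\mP_{ij}$.

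\textbf{The two terms.} The linear term is immediate: since $g^\top h=-\sum_{i\in S^k}\tfrac{1}{v_i}g_i^2$, taking the conditional expectation gives $\E{g^\top h\,|\,x^{k+1}}=-\sum_i \tfrac{p_i}{v_i}g_i^2=-\|g\|^2_{v^{-1}\circ p}$. The quadratic term is the crux of the argument. Expanding $h^\top \mM h=\sum_{i,j}\mM_{ij}\tfrac{g_i g_j}{v_i v_j}\mathbf{1}_{(i\in S^k)}\mathbf{1}_{(j\in S^k)}$ and taking expectations turns each product of indicators into $\mP_{ij}$, so that $\E{h^\top \mM h\,|\,x^{k+1}}=\sum_{i,j}(\mP\circ\mM)_{ij}\tfrac{g_i g_j}{v_i v_j}=u^\top(\mP\circ\mM)u$, where $u\eqdef (g_1/v_1,\dots,g_n/v_n)$ is the rescaled gradient. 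This is exactly the quadratic form controlled by the ESO inequality \eqref{eq:v_def}: applying $\mP\circ\mM\preceq\Diag{p\circ v}$ yields $u^\top(\mP\circ\mM)u\leq \sum_i p_i v_i u_i^2=\sum_i \tfrac{p_i}{v_i}g_i^2=\|g\|^2_{v^{-1}\circ p}$.

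\textbf{Combining.} Substituting both estimates into the conditional expectation of the smoothness bound gives $\E{f(y^{k+1})\,|\,x^{k+1}}\leq f(x^{k+1})-\|g\|^2_{v^{-1}\circ p}+\tfrac12\|g\|^2_{v^{-1}\circ p}=f(x^{k+1})-\tfrac12\|g\|^2_{v^{-1}\circ p}$, which rearranges to \eqref{eq:eso_inq}. The only nontrivial step I anticipate is the quadratic term: one must recognize that its expectation is precisely the $\mP\circ\mM$--weighted quadratic form in the rescaled gradient $u_i=g_i/v_i$, so that the ESO inequality applies directly; everything else is bookkeeping. It is worth noting that the factor $\tfrac12$ survives exactly because the linear contribution $-\|g\|^2_{v^{-1}\circ p}$ is twice the quadratic bound $+\tfrac12\|g\|^2_{v^{-1}\circ p}$, which is the reason the stepsizes $\alpha_i=1/v_i$ in \eqref{eq:y_update} are the natural choice.
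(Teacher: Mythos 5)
Your proof is correct and follows exactly the paper's argument: apply $\mM$--smoothness to the update \eqref{eq:y_update}, observe that the conditional expectation of the linear term equals $-\|\nabla f(x^{k+1})\|^2_{v^{-1}\circ p}$, and bound the expected quadratic term by recognizing it as the $\mP\circ\mM$--quadratic form in the rescaled gradient and invoking the ESO inequality \eqref{eq:v_def}. The paper states these steps more tersely; your expansion of the quadratic form into $u^\top(\mP\circ\mM)u$ with $u_i=g_i/v_i$ is precisely the calculation it leaves implicit.
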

\begin{proof}
We have
\begin{eqnarray*}
\E{f(y^{k+1})} &\stackrel{\eqref{eq:y_update}}{=}&\E{f\left(x^{k+1}-\sum_{i\in S^k} \frac{1}{v_i} \nabla_if(x^{k+1}\right) e_i) }
\\
&\stackrel{\eqref{eq:M-smooth-intro}}{\leq}&
f(x^{k+1}) - \|\nabla f(x^{k+1}) \|^2_{v^{-1}\circ p}+ \frac12\E{\left\| \sum_{i\in S^k} \frac{1}{v_i} \nabla_if(x^{k+1}) e_i \right\|^2_{\mM}}
\\
&\stackrel{\eqref{eq:v_def}}{\leq}&
f(x^{k+1}) - \|\nabla f(x^{k+1}) \|^2_{v^{-1}\circ p}+ \frac12 \left\| \nabla f(x^{k+1})  \right\|^2_{v^{-1}\circ p}.
\end{eqnarray*}
\end{proof}

\subsection{Key technical inequality}

We first establish a  lemma which will play a key part in the analysis.

\begin{lemma}\label{l:mirror_acd}
For every $u$ we have
\begin{eqnarray*}
&&
\eta \sum_{i\in S^k} \left\langle \frac{1}{p_i}\nabla_i f(x^{k+1}) e_i, z^{k+1}-u  \right\rangle -\frac{\eta\sigma_w}{2}\|x^{k+1}-u \|^2_w \\
&&
\qquad \qquad
\leq -\frac12\| z^k-z^{k+1}\|^2_w+\frac{1}{2}\|z^k-u \|_w^2-\frac{1+\eta \sigma_w}{2}\| z^{k+1}-u\|^2_{w}.
\end{eqnarray*}
\end{lemma}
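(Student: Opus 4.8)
The plan is to convert the $z$-update \eqref{eq:z_update} into an exact algebraic identity and then apply the three-point (``law of cosines'') identity for the $w$-weighted inner product. First I would multiply \eqref{eq:z_update} through by $(1+\eta\sigma_w)$ and rearrange to obtain
\begin{equation*}
\sum_{i\in S^k}\frac{\eta}{p_i w_i}\nabla_i f(x^{k+1}) e_i = (z^k - z^{k+1}) + \eta\sigma_w(x^{k+1} - z^{k+1}).
\end{equation*}
Denote the left-hand side by $g$. The key observation is that pairing $g$ with any vector $h$ in the $w$-weighted inner product cancels the weights: $\langle g, h\rangle_w = \sum_i w_i g_i h_i = \eta\sum_{i\in S^k}\frac{1}{p_i}\nabla_i f(x^{k+1}) h_i$. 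Thus the first term on the left-hand side of the lemma is precisely $\langle g, z^{k+1}-u\rangle_w$, and I may substitute the displayed identity for $g$.

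Next I would expand $\langle g, z^{k+1}-u\rangle_w = \langle z^k - z^{k+1},\, z^{k+1}-u\rangle_w + \eta\sigma_w\langle x^{k+1}-z^{k+1},\, z^{k+1}-u\rangle_w$ and apply the identity $\langle a-b,\, b-c\rangle_w = \tfrac12\big(\|a-c\|_w^2 - \|a-b\|_w^2 - \|b-c\|_w^2\big)$ to each summand. The first application (with $a=z^k$, $b=z^{k+1}$, $c=u$) produces $\tfrac12\|z^k-u\|_w^2 - \tfrac12\|z^k-z^{k+1}\|_w^2 - \tfrac12\|z^{k+1}-u\|_w^2$, and the second (with $a=x^{k+1}$, $b=z^{k+1}$, $c=u$) produces $\tfrac{\eta\sigma_w}{2}\|x^{k+1}-u\|_w^2 - \tfrac{\eta\sigma_w}{2}\|z^{k+1}-x^{k+1}\|_w^2 - \tfrac{\eta\sigma_w}{2}\|z^{k+1}-u\|_w^2$.

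Finally I would subtract the term $\tfrac{\eta\sigma_w}{2}\|x^{k+1}-u\|_w^2$ prescribed on the left-hand side of the lemma, which exactly cancels the matching positive contribution, merge the two $\|z^{k+1}-u\|_w^2$ terms into the single coefficient $-\tfrac{1+\eta\sigma_w}{2}$, and discard the nonpositive remainder $-\tfrac{\eta\sigma_w}{2}\|z^{k+1}-x^{k+1}\|_w^2 \le 0$ to reach the claimed bound. The argument is an exact identity followed by dropping one nonpositive term, so there is no genuine analytic difficulty; the only thing to get right is the bookkeeping—verifying that the factor $w_i$ cancels so that the standard inner product appearing on the left matches the $w$-weighted geometry, and tracking the signs and coefficients carefully through the two applications of the cosine identity.
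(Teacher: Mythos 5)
Your proof is correct and follows essentially the same route as the paper: the paper obtains your key identity $\langle g, z^{k+1}-u\rangle_w = \langle z^k-z^{k+1}, z^{k+1}-u\rangle_w + \eta\sigma_w\langle x^{k+1}-z^{k+1}, z^{k+1}-u\rangle_w$ by writing the $z$-update as the minimizer of $\tfrac12\|z-z^k\|_w^2 + \eta\sum_{i\in S^k}\langle \tfrac{1}{p_i}\nabla_i f(x^{k+1}), z\rangle + \tfrac{\eta\sigma_w}{2}\|z-x^{k+1}\|_w^2$ and using first-order optimality, whereas you rearrange the closed-form update directly—an equivalent step. The two applications of the three-point identity and the discarding of $-\tfrac{\eta\sigma_w}{2}\|z^{k+1}-x^{k+1}\|_w^2$ match the paper exactly.
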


\begin{proof}
The proof is a direct generalization of the proof of analogous lemma in \cite{allen2016even}. We include it for completeness.
Notice that \eqref{eq:z_update} is equivalent to
\[
z^{k+1}=\argmin_z h^k(z)\eqdef \argmin_z \frac{1}{2}\| z-z^k\|^2_{w}+\eta \sum_{i\in S^k} \langle \frac{1}{p_i}\nabla_i f(x^{k+1}), z \rangle   +\frac{\eta \sigma_w}{2}\|z-x^{k+1} \|_{w}^2.
\]
Therefore, we have for every $u$
\begin{eqnarray}
\nonumber
0&=&\langle \nabla h^k(z^{k+1}),z^{k+1}-u \rangle_{w} \\
&=&
\langle z^{k+1}-z^k, z^{k+1}-u\rangle_{w} +\eta \sum_{i\in S^k} \langle \frac{1}{p_i}\nabla_i f(x^{k+1}), z^{k+1}-u  \rangle 
\\
&& \qquad \qquad
+\eta \sigma_w \langle z^{k+1}-x^{k+1}, z^{k+1}-u\rangle_{w}. \label{eq:zk_plus_1_optimal}
\end{eqnarray}
Next, by generalized Pythagorean theorem we have
\begin{equation}\label{eq:pyt_z}
\langle z^{k+1}-z^k,z^{k+1}-u \rangle_w =\frac12 \|z^k-z^{k+1} \|^2_w-\frac12 \|z^k-u\|^2_w+\frac12 \|u-z^{k+1} \|^2_w
\end{equation}
and
\begin{equation}\label{eq:pyt_x}
\langle z^{k+1}-x^{k+1},z^{k+1}-u \rangle_w =\frac12 \|x^{k+1}-z^{k+1} \|^2_w-\frac12 \|x^{k+1}-u\|^2_w+\frac12 \|u-z^{k+1} \|^2_w.
\end{equation}
It remains to put~\eqref{eq:pyt_z} and~\eqref{eq:pyt_x} into~\eqref{eq:zk_plus_1_optimal}.
\end{proof}

\subsection{Proof of the theorem}

Consider all expectations in this proof to be taken with respect to the choice of subset of coordinates $S^k$.
Using Lemma~\ref{l:mirror_acd} we have
\begin{eqnarray*}
&&
\eta \sum_{i\in S^k} \langle \frac{1}{p_i}\nabla_i f(x^{k+1}) e_i , z^{k}-u  \rangle -\frac{\eta\sigma_w}{2}\|x^{k+1}-u \|^2_w 
\\
&
\leq &
\eta \sum_{i\in S^k} \langle \frac{1}{p_i}\nabla_i f(x^{k+1}) e_i, z^{k}-z^{k+1}  \rangle
 -\frac12\| z^k-z^{k+1}\|^2_w+\frac{1}{2}\|z^k-u \|_w^2
 -\frac{1+\eta \sigma_w}{2}\| z^{k+1}-u\|^2_{w} 
\\
&\leq &
\frac{\eta^2}{2}\| \sum_{i\in S^k} \frac{1}{p_i}   \nabla_i f(x^{k+1})e_i \|^2_{w^{-1}}+\frac{1}{2}\|z^k-u \|_w^2
-\frac{1+\eta \sigma_w}{2}\| z^{k+1}-u\|^2_{w} 
\\
&= &
\frac{\eta^2}{2} \| \sum_{i\in S^k}  \nabla_i f(x^{k+1})e_i \|^2_{w^{-1}\circ p^{-2}}+\frac{1}{2}\|z^k-u \|_w^2
-\frac{1+\eta \sigma_w}{2}\| z^{k+1}-u\|^2_{w}  .
\end{eqnarray*}

Taking the expectation over the choice of $S^k$ we get

\begin{eqnarray}\nonumber
&&
\eta \langle \nabla f(x^{k+1}), z^{k}-u  \rangle -\frac{\eta\sigma_w}{2}\|x^{k+1}-u \|^2_w 
\\ \nonumber
& \leq &
\frac{\eta^2}{2} \|  \nabla f(x^{k+1}) \|^2_{w^{-1}\circ p^{-1}}+\frac{1}{2}\|z^k-u \|_w^2
-\frac{1+\eta \sigma_w}{2}\E{\| z^{k+1}-u\|^2_{w} }
\\ \nonumber
& 
\stackrel{\eqref{eq:w_def}}{=} &
\frac{\eta^2}{2} \|  \nabla f(x^{k+1}) \|^2_{v^{-1}\circ p}+\frac{1}{2}\|z^k-u \|_w^2
-\frac{1+\eta \sigma_w}{2}\E{\| z^{k+1}-u\|^2_{w} }
\\
\nonumber & 
\stackrel{\eqref{eq:eso_inq}}{\leq} &
\eta^2\left( f(x^{k+1})-\E{f(y^{k+1})}\right)+\frac{1}{2}\|z^k-u \|_w^2
-\frac{1+\eta \sigma_w}{2}\E{\| z^{k+1}-u\|^2_{w} }. \label{eq:acd_proof_almost}
\end{eqnarray}

Next we can do the following bounds
\begin{eqnarray*}
& & \eta \left( f(x^{k+1})- f(x^*)\right)
 \\
& & \qquad \qquad 
\stackrel{\eqref{eq:sc}}{\leq}
\eta \langle \nabla f(x^{k+1}),x^{k+1}-x^*\rangle -\frac{\eta \sigma_w}{2} \|x^*-x^{k+1} \|^2_w
\\
& & \qquad \qquad 
=
\eta \langle \nabla f(x^{k+1}),x^{k+1}-z^{k}\rangle +\eta \langle \nabla f(x^{k+1}),z^{k}-x^*\rangle -\frac{\eta \sigma_w}{2} \|x^*-x^{k+1} \|^2_w
\\
& & \qquad \qquad 
\stackrel{\eqref{eq:x_update_acd}}{=}
\frac{(1-\theta)\eta}{\theta} \langle \nabla f(x^{k+1}),y^k-x^{k+1}\rangle +\eta \langle \nabla f(x^{k+1}),z^{k}-x^*\rangle -\frac{\eta \sigma_w}{2} \|x^*-x^{k+1} \|^2_w
\\
& &\qquad \qquad  
\stackrel{\eqref{eq:acd_proof_almost}}{\leq}
\frac{(1-\theta)\eta}{\theta}\left( f(y^k)-f(x^{k+1})\right)+\eta^2\left(f(x^{k+1}) -\E{f(y^{k+1})}\right)
\\
&& \qquad\qquad \qquad+
\frac{1}{2}\|z^k-x^* \|_w^2
-\frac{1+\eta \sigma_w}{2}\E{\| z^{k+1}-x^*\|^2_{w} }.
\end{eqnarray*}
Choosing $\eta=\frac1\theta$  and rearranging the above we obtain

 \[
\frac{1}{\theta^2}\left(\E{f(y^{k+1})}-f(x^*)\right)+\frac{1+\frac{\sigma_w}{\theta}}{2}\E{\| z^{k+1}-x^*\|^2_{w} } \leq
\frac{(1-\theta)}{\theta^2}\left( f(y^k)-f(x^*)\right)+
\frac{1}{2}\|z^k-x^* \|_w^2.
 \]
Finally, setting $\theta$ such that $1+\frac{\sigma_w}{\theta}=\frac{1}{1-\theta}$, which coincides with~\eqref{eq:tau_def_acd}, we get
\[
\E{P^{k+1}}
\leq
(1-\theta)P^k,
\]
as desired.

\section{Better rates for mini-batch \texttt{CD} (without acceleration) } \label{sec:cd_imp}

In this section we establish better rates for for mini-batch \texttt{CD} method than the current state of the art. Our starting point is the following complexity theorem.

\begin{theorem} Choose any proper sampling and let $\mP$ be its probability matrix and $p$ its probability vector. Let \[c(S,\mM) \eqdef \lambda_{\max}(\mP''\circ \mM),\] where $\mP'' \eqdef \mD^{-1} \mP \mD^{-1}$ and $\mD \eqdef \Diag{p}$. Then the vector $v$ defined by $v_i = c(S,\mM) p_i$ satisfies  the ESO inequality \eqref{eq:v_def}. Moreover, if we run the non-accelerated \texttt{CD} method \eqref{eq:Parallel-CD-intro}  with this sampling and stepsizes $\alpha_i = \tfrac{1}{c(S,\mM) p_i}$, then the iteration complexity of the method is
\begin{equation}\label{eq:bis798f98gud}\frac{c(S,\mM)}{\sigma} \log \frac{1}{\epsilon}.\end{equation}
\end{theorem}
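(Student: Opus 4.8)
The plan is to prove the two assertions separately: first that $v_i = c(S,\mM)p_i$ is a valid ESO vector, i.e.\ that it satisfies \eqref{eq:v_def}, and then to obtain the rate \eqref{eq:bis798f98gud} by substituting this $v$ into the already-established complexity bound \eqref{eq:NSync} for mini-batch \texttt{CD} with arbitrary sampling.

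For the ESO claim, the key observation is that conjugation by the diagonal matrix $\mD = \Diag{p}$ commutes with the Hadamard product against $\mM$. Concretely, since $\mD^{-1}$ is diagonal, for any symmetric $\mA$ one has $(\mD^{-1}\mA\mD^{-1})_{ij} = \mA_{ij}/(p_ip_j)$; applying this with $\mA = \mP\circ\mM$ and comparing entrywise with $\mP''\circ\mM$, whose $(i,j)$ entry is $\mP_{ij}\mM_{ij}/(p_ip_j)$, gives the identity
\[ \mP''\circ\mM = \mD^{-1}(\mP\circ\mM)\mD^{-1}. \]
By the Schur product theorem $\mP\circ\mM \succeq 0$ (both factors are symmetric and positive semidefinite — $\mP$ is the probability matrix and $\mM$ the smoothness matrix), hence $\mP''\circ\mM\succeq 0$ and $c(S,\mM)=\lambda_{\max}(\mP''\circ\mM)\geq 0$. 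The definition of the largest eigenvalue then yields $\mP''\circ\mM \preceq c(S,\mM)\,\mI$. Conjugating this Loewner inequality by $\mD$ (a congruence, which preserves $\preceq$) gives
\[ \mP\circ\mM = \mD(\mP''\circ\mM)\mD \preceq c(S,\mM)\,\mD^2 = \Diag{c(S,\mM)p_1^2,\dots,c(S,\mM)p_n^2}. \]
Since $p_iv_i = p_i\cdot c(S,\mM)p_i = c(S,\mM)p_i^2$, the right-hand side is exactly $\Diag{p\circ v}$, so \eqref{eq:v_def} holds for this choice of $v$.

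For the complexity claim I would simply plug the chosen $v$ into \eqref{eq:NSync}. With $v_i = c(S,\mM)p_i$ the stepsizes $\alpha_i = 1/v_i = 1/(c(S,\mM)p_i)$ are precisely those prescribed in the statement, and every ratio collapses to the same constant, $v_i/p_i = c(S,\mM)$, so that $\max_i v_i/(p_i\sigma) = c(S,\mM)/\sigma$. Substituting into \eqref{eq:NSync} gives the stated rate \eqref{eq:bis798f98gud}.

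The argument is essentially routine once \eqref{eq:NSync} is available; the only step requiring care is the reduction in the first part, namely recognizing that $c(S,\mM)$ was defined precisely so that the diagonally-scaled Hadamard product $\mP''\circ\mM$ is a $\mD$-congruence transform of $\mP\circ\mM$, which converts the eigenvalue bound back into the Loewner ESO inequality. I do not anticipate a genuine obstacle here: the remaining work is bookkeeping about $\Diag{p}$-congruences and verifying that the resulting leading constant matches the prescribed stepsizes.
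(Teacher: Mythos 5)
Your proof is correct and follows essentially the same route as the paper: both hinge on the identity $\mD^{-1}(\mP\circ\mM)\mD^{-1}=\mP''\circ\mM$, which makes the ESO condition $\mP\circ\mM\preceq c\,\mD^2$ equivalent to $c\geq\lambda_{\max}(\mP''\circ\mM)$, after which the rate follows by substituting $v_i=c(S,\mM)p_i$ into \eqref{eq:NSync}. The only difference is cosmetic: you travel the chain of equivalences in the reverse direction and add an unneeded appeal to the Schur product theorem (the Loewner bound $\mA\preceq\lambda_{\max}(\mA)\mI$ needs only symmetry, not positive semidefiniteness).
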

\begin{proof} Let $v_i = c p_i$ for all $i$. The ESO inequality holds for this choice of $v$ if $\mP \circ \mM \preceq c \mD^2$. This is equivalent to
Since $\mD^{-1}(\mP \circ \mM)\mD^{-1} = \mP'' \circ \mM$, the above inequality is equivalent to $\mP'' \circ \mM \preceq c \mI$, which is equivalent to $c\geq \lambda_{\max}(\mP'' \circ \mM)$. So, choosing $c=c(S,\mM)$ works.  Plugging this choice of $v$ into the complexity result \eqref{eq:NSync} gives \eqref{eq:bis798f98gud}.
\end{proof}

\subsection{Two uniform samplings and one new importance sampling}

In the next theorem we compute now consider several special samplings. All of them choose in expectation a mini-batch of size $\tau$ and are hence directly comparable.

\begin{theorem} \label{thm:809h0s9s} The following statements hold:
\begin{itemize}
\item[(i)] Let $S_1$ be the $\tau$--nice sampling. Then 
\begin{equation}\label{eq:c1}c_1\eqdef c(S_1,\mM) = \frac{n}{\tau} \lambda_{\max}\left( \frac{\tau-1}{n-1}\mM + \frac{n-\tau}{n-1} \Diag{\mM}\right).\end{equation}
\item[(ii)] Let $S_2$ be the independent uniform sampling with mini-batch size $\tau$. That is, for all  $i$ we independently decide whether $i\in S$, and do so by picking $i$ with probability $p_i = \tfrac{\tau}{n}$. Then
\begin{equation}\label{eq:c2}c_2\eqdef c(S_2,\mM) = \lambda_{\max}\left(\mM + \frac{n-\tau}{\tau}\Diag{\mM}\right).\end{equation}
\item[(iii)] Let $S_3$ be an independent sampling where we choose $p_i \propto \tfrac{\mM_{ii}}{\delta + \mM_{ii}}$ where $\delta>0$ is chosen so that $\sum_i p_i = \tau$.  Then
\begin{equation}\label{eq:c3} c_3\eqdef c(S_3,\mM) = \lambda_{\max}(\mM) + \delta.\end{equation}
Moreover,
\begin{equation}\label{eq:bi7gdv98dg} \delta \leq \frac{\trace{\mM}}{\tau}.\end{equation}
\end{itemize}
\end{theorem}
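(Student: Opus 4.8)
The plan is to compute the matrix $\mP''\circ\mM$ explicitly for each of the three samplings and then simply read off its largest eigenvalue. The only input needed is the entrywise formula $\mP''_{ij}=\mP_{ij}/(p_i p_j)$, which follows from $\mP''=\mD^{-1}\mP\mD^{-1}$ with $\mD=\Diag{p}$, together with the defining structure of each sampling's probability matrix $\mP$. Throughout I will separate the diagonal from the off-diagonal entries, since the Hadamard product treats them differently.

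For part (i), I recall from the main text that the $\tau$--nice sampling has $\mP=\tfrac{\tau}{n}\big((1-\beta)\mI+\beta\mE\big)$ with $\beta=\tfrac{\tau-1}{n-1}$, and $p_i=\tfrac{\tau}{n}$, so $\mD=\tfrac{\tau}{n}\mI$. Hence $\mP''=\tfrac{n}{\tau}\big((1-\beta)\mI+\beta\mE\big)$, whose diagonal entries are $\tfrac{n}{\tau}$ and whose off-diagonal entries are $\tfrac{n}{\tau}\beta$. Taking the Hadamard product with $\mM$ and collecting terms gives $\mP''\circ\mM=\tfrac{n}{\tau}\big(\beta\mM+(1-\beta)\Diag{\mM}\big)$; substituting $1-\beta=\tfrac{n-\tau}{n-1}$ and applying $\lambda_{\max}$ yields~\eqref{eq:c1}.

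Parts (ii) and (iii) I will handle together, since both samplings are independent, for which $\mP_{ij}=p_ip_j$ whenever $i\neq j$ and $\mP_{ii}=p_i$. Consequently $\mP''_{ij}=1$ off the diagonal and $\mP''_{ii}=1/p_i$, so $\mP''\circ\mM=\mM+\Diag{(1/p_i-1)\mM_{ii}}$: the off-diagonal block of $\mM$ is reproduced unchanged and only the diagonal is rescaled. For (ii), plugging in the constant $p_i=\tau/n$ turns the diagonal correction into $\tfrac{n-\tau}{\tau}\Diag{\mM}$, giving~\eqref{eq:c2}. For (iii), the point of the choice $p_i=\tfrac{\mM_{ii}}{\delta+\mM_{ii}}$ is exactly that it makes the per-coordinate correction $(1/p_i-1)\mM_{ii}$ equal to the constant $\delta$ for every $i$ (a one-line rearrangement of the defining equation), so the diagonal correction collapses to $\delta\mI$ and $\mP''\circ\mM=\mM+\delta\mI$, whence $c_3=\lambda_{\max}(\mM)+\delta$, which is~\eqref{eq:c3}. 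Finally, the bound~\eqref{eq:bi7gdv98dg} follows from the normalization $\sum_i p_i=\tau$ by the crude estimate $\tfrac{\mM_{ii}}{\delta+\mM_{ii}}\leq\tfrac{\mM_{ii}}{\delta}$, which gives $\tau\leq\trace{\mM}/\delta$ and hence $\delta\leq\trace{\mM}/\tau$.

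I do not anticipate a genuine obstacle here; the computation is elementary once the independent-sampling identity $\mP''\circ\mM=\mM+\Diag{(1/p_i-1)\mM_{ii}}$ is in hand. The only place demanding a small amount of care is the bookkeeping of diagonal versus off-diagonal entries under the Hadamard product, and the verification in (iii) that the prescribed $p_i$ is precisely the solution of $(1/p_i-1)\mM_{ii}=\delta$ --- i.e.\ that $\delta$ is genuinely a free scalar tuned to enforce $\sum_i p_i=\tau$ rather than an independent proportionality constant.
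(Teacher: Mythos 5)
Your proposal is correct and follows essentially the same route as the paper: compute $\mP''\circ\mM$ explicitly for each sampling (using $\mP=\tfrac{\tau}{n}((1-\beta)\mI+\beta\mE)$ for the $\tau$--nice case and $\mP=pp^\top+\mD-\mD^2$, i.e.\ $\mP''_{ij}=1$ off-diagonal and $\mP''_{ii}=1/p_i$, for the independent cases), observe that the choice of $p_i$ in (iii) makes the diagonal correction equal to $\delta\mI$, and derive the bound on $\delta$ from $\sum_i \mM_{ii}/(\delta+\mM_{ii})=\tau\leq\trace{\mM}/\delta$. Your closing remark that $\delta$ is the tuning scalar making $(1/p_i-1)\mM_{ii}$ constant is exactly the point the paper's proof relies on.
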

\begin{proof}
We will deal with each case separately:
\begin{enumerate}
\item[(i)] The probability matrix of $S_1$ is $\mP = \frac{\tau}{n}\left(\beta \mE + (1-\beta) \mI  \right),$ where $\beta = \tfrac{\tau-1}{n-1}$, and $\mD = \tfrac{\tau}{n}\mI$. Hence,
\begin{eqnarray*}\mP'' \circ \mM &=& (\mD^{-1} \mP \mD^{-1})\circ \mM  \\
&=& \frac{\tau}{n}\left( \beta \mD^{-1}\mE \mD^{-1} + (1-\beta) \mD^{-2} \right) \circ \mM\\
&=& \frac{\tau}{n}\left( \frac{\tau-1}{n-1}\mE + \frac{n-\tau}{n-1} \mI \right) \circ \mM \\
&=& \frac{\tau}{n}\left( \frac{\tau-1}{n-1}\mM + \frac{n-\tau}{n-1} \Diag{\mM}\right).
\end{eqnarray*}
\item[(ii)] The probability matrix of $S_2$ is $\mP = \frac{\tau}{n}\left(\frac{\tau}{n}\mE + (1-\frac{\tau}{n}) \mI  \right)$, and $\mD = \tfrac{\tau}{n}\mI$. Hence,
\begin{eqnarray*}\mP'' \circ \mM &=& (\mD^{-1} \mP \mD^{-1})\circ \mM  \\
&=&  \frac{\tau}{n}\left(\frac{\tau}{n}\mD^{-1}\mE \mD^{-1}+ \left(1-\frac{\tau}{n}\right) \mD^{-2}  \right) \circ \mM\\
&=& \left( \mE + \frac{n-\tau}{\tau} \mI \right) \circ \mM \\
&=& \mM + \frac{n-\tau}{\tau} \Diag{\mM}.
\end{eqnarray*}
\item[(iii)] The probability matrix of $S_3$ is $\mP = pp^\top + \mD - \mD^2$. Therefore,
\begin{eqnarray*}\mP'' \circ \mM &=& (\mD^{-1} \mP \mD^{-1})\circ \mM  \\
&=&   \left(\mD^{-1} pp^\top \mD^{-1}+ \mD^{-1} - \mI \right) \circ \mM\\
&=&   \left(\mE + \mD^{-1} - \mI \right) \circ \mM\\
&=&   \left(\mE + \delta(\Diag{\mM})^{-1} \right) \circ \mM\\
&=& \mM+ \delta \mI.
\end{eqnarray*}
To establish the bound on $\delta$, it suffices to note that
\[\tau = \sum_i p_i = \sum_{i} \frac{\mM_{ii}}{\delta + \mM_{ii}} \leq \sum_{i} \frac{\mM_{ii}}{\delta } = \frac{\trace{\mM}}{\delta}.\]
\end{enumerate}
\end{proof}

\subsection{Comparing the samplings}

In the next result we show that sampling $S_3$ is at most twice worse than $S_2$, which is at most twice worse than $S_1$. Note that $S_1$ is uniform; and it is the standard mini-batch sampling used in the literature and applications. Our novel sampling $S_3$ is {\em non-uniform}, and is at most four times worse than $S_1$ in the worst case. However, {\em it can be substantially better}, as we shall show later by giving an example.

\begin{theorem} \label{thm:nonacc_comp}The leading complexity terms $c_1,c_2$, and $c_3$ of \texttt{CD} (Algorithm~\eqref{eq:Parallel-CD-intro}) with samplings $S_1, S_2$, and $S_3$, respectively, defined in Theorem~\ref{thm:809h0s9s}, compare as follows:
\begin{itemize}
	\item[(i)] $c_3 \leq \frac{2n-\tau}{n-\tau} c_2$
	\item[(ii)] $c_2 \leq \frac{(n-1)\tau}{n(\tau-1)} c_1 \leq 2 c_1$
\end{itemize}
\end{theorem}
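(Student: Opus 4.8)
The plan is to prove the two parts separately, in both cases reducing the comparison of largest eigenvalues to a Loewner ($\preceq$) ordering of the underlying matrices and then invoking monotonicity of $\lambda_{\max}$ under that ordering. Throughout I use that $\mM\succeq 0$ forces $\Diag{\mM}\succeq 0$ and $\mM_{ii}\geq 0$, and that for symmetric $A\preceq B$ one has $\lambda_{\max}(A)\leq \lambda_{\max}(B)$.

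For part (ii) I would first rewrite both constants with a common scaling. From Theorem~\ref{thm:809h0s9s} we have $c_1 = \frac{n}{\tau(n-1)}\lambda_{\max}(A_1)$ and $c_2 = \frac{1}{\tau}\lambda_{\max}(A_2)$, where $A_1\eqdef (\tau-1)\mM + (n-\tau)\Diag{\mM}$ and $A_2\eqdef \tau\mM + (n-\tau)\Diag{\mM}$. The key observation is the matrix inequality $A_2 \preceq \frac{\tau}{\tau-1}A_1$, which holds because $\frac{\tau}{\tau-1}A_1 - A_2 = \frac{n-\tau}{\tau-1}\Diag{\mM}\succeq 0$ (using $n>\tau$ and $\tau\geq 2$). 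Monotonicity of $\lambda_{\max}$ then gives $\lambda_{\max}(A_2)\leq \frac{\tau}{\tau-1}\lambda_{\max}(A_1)$, and substituting back the prefactors yields exactly $c_2 \leq \frac{(n-1)\tau}{n(\tau-1)}c_1$ after the scalar factors cancel. The remaining bound $\frac{(n-1)\tau}{n(\tau-1)}\leq 2$ is a one-line scalar check, equivalent to $2n\leq \tau(n+1)$, which holds for all $\tau\geq 2$.

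For part (i) I would separately control the two summands of $c_3 = \lambda_{\max}(\mM)+\delta$. The first is handled by $\mM\preceq \mM + \frac{n-\tau}{\tau}\Diag{\mM}$, giving $\lambda_{\max}(\mM)\leq c_2$. For the second, I combine the bound $\delta\leq \trace{\mM}/\tau$ from Theorem~\ref{thm:809h0s9s} with $\trace{\mM}\leq n\max_i \mM_{ii}$, so that $\delta\leq \frac{n}{\tau}\max_i \mM_{ii}$; on the other hand $\frac{n-\tau}{\tau}\Diag{\mM}\preceq \mM + \frac{n-\tau}{\tau}\Diag{\mM}$ gives $c_2\geq \frac{n-\tau}{\tau}\max_i \mM_{ii}$, hence $\max_i \mM_{ii}\leq \frac{\tau}{n-\tau}c_2$ and therefore $\delta\leq \frac{n}{n-\tau}c_2$. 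Adding the two estimates produces $c_3 \leq \bigl(1+\frac{n}{n-\tau}\bigr)c_2 = \frac{2n-\tau}{n-\tau}c_2$, as claimed.

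The algebraic manipulations and the scalar inequality are routine; the only genuine steps are spotting the right Loewner comparisons in each part — the ratio $\tau/(\tau-1)$ in (ii), and the pair of one-sided estimates $\lambda_{\max}(\mM)\leq c_2$ and $\max_i \mM_{ii}\leq \frac{\tau}{n-\tau}c_2$ in (i) — after which monotonicity of $\lambda_{\max}$ does all the work. I expect the main (and minor) obstacle to be bookkeeping the constants so that the scalar prefactors telescope to exactly $\frac{(n-1)\tau}{n(\tau-1)}$ and $\frac{2n-\tau}{n-\tau}$ rather than to looser multiples.
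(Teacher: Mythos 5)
Your proposal is correct. Part (i) is essentially the paper's own argument, just reorganized: the paper also bounds $\lambda_{\max}(\mM)\leq c_2$ and then chains $\delta\leq \trace{\mM}/\tau\leq n\max_i\mM_{ii}/\tau = \tfrac{n}{n-\tau}\lambda_{\max}\bigl(\tfrac{n-\tau}{\tau}\Diag{\mM}\bigr)\leq \tfrac{n}{n-\tau}c_2$, which is exactly your pair of one-sided estimates. Part (ii), however, is genuinely different and cleaner than what the paper does. The paper writes $\mM+\tfrac{n-\tau}{\tau}\Diag{\mM}$ as $\tfrac{n(\tau-1)}{\tau(n-1)}\mM+\tfrac{n-\tau}{\tau}\Diag{\mM}$ plus a positive multiple of $\mM$, invokes subadditivity of $\lambda_{\max}$, arrives at the self-referential inequality $c_2\leq c_1+\tfrac{n-\tau}{(n-1)\tau}c_2$, and then has to rearrange (which silently requires the coefficient of $c_2$ on the right to be below $1$, i.e.\ $\tau>1$). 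Your route instead establishes the single Loewner comparison $\tau\mM+(n-\tau)\Diag{\mM}\preceq \tfrac{\tau}{\tau-1}\bigl((\tau-1)\mM+(n-\tau)\Diag{\mM}\bigr)$, whose difference is the visibly PSD matrix $\tfrac{n-\tau}{\tau-1}\Diag{\mM}$, and finishes by monotonicity of $\lambda_{\max}$ with no rearrangement step; the constant $\tfrac{(n-1)\tau}{n(\tau-1)}$ falls out directly from the prefactors. Both arguments degenerate at $\tau=1$ (the bound in (ii) is vacuous there), so your explicit assumption $\tau\geq 2$ matches what the paper implicitly needs. No gaps.
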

\begin{proof}
We have:
\begin{enumerate}
\item[(i)] 
\begin{eqnarray*}
c_3 &\overset{\eqref{eq:c3}}{=}&\lambda_{\max}(\mM) + \delta \\
&\leq & \lambda_{\max}\left( \mM + \frac{n-\tau}{\tau}\Diag{\mM}\right) + \delta \\
&\overset{\eqref{eq:c2}}{=}& c_2 + \delta \\
&\overset{\eqref{eq:bi7gdv98dg}}{\leq }&  c_2 + \frac{\trace{\mM}}{\tau}\\
&\leq & c_2 + \frac{n \max_i \mM_{ii}}{\tau} \\
&= & c_2 + \frac{n}{n-\tau}\frac{n-\tau}{\tau} \max_i \mM_{ii} \\
&= & c_2 + \frac{n}{n-\tau}\lambda_{\max}\left(\frac{n-\tau}{\tau} \Diag{\mM}\right)\\
&\leq & c_2 + \frac{n}{n-\tau}\lambda_{\max}\left(\mM+\frac{n-\tau}{\tau} \Diag{\mM}\right)\\
&\overset{\eqref{eq:c2}}{=}&  \frac{2n-\tau}{n-\tau} c_2.
\end{eqnarray*}
\item[(ii)] 
\begin{eqnarray*} 
c_2 &\overset{\eqref{eq:c2}}{=}& \lambda_{\max}\left(\mM + \frac{n-\tau}{\tau}\Diag{\mM}\right) \\
&=&  \lambda_{\max}\left( \frac{n(\tau-1)}{\tau(n-1)} \mM + \frac{n-\tau}{\tau}\Diag{\mM} + \left(1-\frac{n(\tau-1)}{\tau(n-1)}\right)\mM\right)\\
&\overset{(\dagger)}{\leq} &  \lambda_{\max}\left( \frac{n(\tau-1)}{\tau(n-1)} \mM + \frac{n-\tau}{\tau}\Diag{\mM}\right) + \lambda_{\max}\left(\left(1-\frac{n(\tau-1)}{\tau(n-1)}\right)\mM\right)\\
&\leq & \lambda_{\max}\left( \frac{n(\tau-1)}{\tau(n-1)} \mM + \frac{n(n-\tau)}{\tau(n-1)}\Diag{\mM}\right) +\frac{n-\tau}{(n-1)\tau} \lambda_{\max}\left(\mM\right)\\
&\overset{\eqref{eq:c1}}{=}& c_1 + \frac{n-\tau}{(n-1)\tau} \lambda_{\max}\left(\mM\right)\\
&\overset{\eqref{eq:c2}}{\leq} & c_1 +  \frac{n-\tau}{(n-1)\tau} c_2.
\end{eqnarray*}
The statement follows by reshuffling the final inequality. In  step $(\dagger)$ we have used subadditivity of the function $\mA \mapsto \lambda_{\max}(\mA)$.
\end{enumerate}
\end{proof}

The next simple example shows that sampling $S_3$ can be arbitrarily better than sampling $S_1$.

\begin{example}\label{ex:nonacc_imp_diff}
Consider $n>>1$, and choose any $\tau$ and 
\[
\mM\eqdef\begin{pmatrix}
n&0^\top \\ 0 & \mI 
\end{pmatrix}
\]
for $\mI\in \R^{(n-1)\times (n-1)}$. Then, it is easy to verify that $c_1\stackrel{\eqref{eq:c1}}{=}\frac{n^2}{\tau}$ and $c_3\stackrel{\eqref{eq:c3}+\eqref{eq:bi7gdv98dg}}{\leq} n+\frac{2n-1}{\tau}=\cO(\frac{n}{\tau})$. Thus, convergence rate of \texttt{CD} with $S_3$ sampling can be up to $\cO(n)$ times better than convergence rate of \texttt{CD} with $\tau$--nice sampling. 
\end{example}

\begin{remark}\label{rem:proport}
Looking only at diagonal elments of $\mM$, an intuition tells us that one should sample a coordinate corresponding to larger diagonal entry of $\mM$ with higher probability. However, this might lead to worse convergence, comparing to $\tau$--nice sampling. Therefore the results we provide in this section cannot be qualitatively better, i.e. there are examples of smoothness matrix, for which assigning bigger probability to bigger diagonal elements leads to worse rate. It is an easy exercise to verify that for $\mM\in \R^{10\times 10}$ such that
\[
\mM\eqdef \begin{pmatrix}
2 & 0^\top \\ 0& 11^\top
\end{pmatrix}, 
\] 
and $\tau\geq 2$ we have $c(S_{\text{nice}},\mM)\leq c(S',\mM)$ for any $S'$ satisfying $p(S')_i\geq p(S')_j$ if and only if $\mM_{ii}\geq \mM_{jj}$. 
\end{remark}

\section{Proofs for Section~\ref{sec:import}}
\subsection{Proof of Theorem~\ref{thm:LB}} \label{app:inequality_lower_bound}

We start with a lemma which allows us to focus on ESO parameters $v_i$ which are proportional to the squares of the probabilities $p_i$.

\begin{lemma} \label{lem:b7f8gf8} Assume that the ESO inequality  \eqref{eq:v_def}  holds. Let $j=\arg\max_i \frac{v_i}{p_i^2}$, $c = \frac{v_j}{p_j^2}$ and $v' = c p^2$ (i.e.,  $v'_i = c p_i^2$ for all $i$). Then the following statements hold:
\begin{enumerate}
\item[(i)] $v'\geq v$.
\item[(ii)] ESO inequality \eqref{eq:v_def}  holds for $v'$ also.
\item[(iii)]  Assuming $f$ is $\sigma$--convex, Theorem~\ref{th:acd} holds if we replace $v$ by $v'$, and the rate \eqref{eq:ineq-minibatch-speedup} is unchanged if we replace $v$ by $v'$. 
\end{enumerate}
\end{lemma}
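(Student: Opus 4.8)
The plan is to prove the three claims of Lemma~\ref{lem:b7f8gf8} in order, exploiting the specific structure $v' = c\,p^2$ where $c = \max_i v_i/p_i^2$.

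\textbf{Part (i).} First I would observe that by the definition of $c$ and $j$, we have $c = v_j/p_j^2 \geq v_i/p_i^2$ for every $i$, simply because $j$ achieves the maximum. Rearranging this gives $c\,p_i^2 \geq v_i$, i.e.\ $v'_i \geq v_i$ coordinatewise, which is exactly the claim $v' \geq v$. This step is immediate and requires no real work.

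\textbf{Part (ii).} Next I would verify that the ESO inequality \eqref{eq:v_def} still holds after replacing $v$ by $v'$. The inequality reads $\mP \circ \mM \preceq \Diag{p \circ v}$. Since $v' \geq v$ componentwise (from part (i)) and all quantities $p_i$ are positive, we get $\Diag{p \circ v} \preceq \Diag{p \circ v'}$ because the difference is a diagonal matrix with nonnegative entries $p_i(v'_i - v_i) \geq 0$, hence positive semidefinite. Combining with the assumed ESO inequality $\mP \circ \mM \preceq \Diag{p \circ v}$ and transitivity of the Loewner order yields $\mP \circ \mM \preceq \Diag{p \circ v'}$, which is the ESO inequality for $v'$.

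\textbf{Part (iii).} Finally I would argue that Theorem~\ref{th:acd} and the rate \eqref{eq:ineq-minibatch-speedup} are unaffected by the substitution. The key point is that the dominant complexity quantity is $\max_i v_i/p_i^2$, and by the very construction $v'_i/p_i^2 = c\,p_i^2/p_i^2 = c$ is constant in $i$, so $\max_i v'_i/p_i^2 = c = \max_i v_i/p_i^2$. Thus the weights $w_i = v_i/p_i^2$ entering the weighted strong convexity and the rate via \eqref{eq:8h8hs8s} give the same value of $\sigma_w = \min_i p_i^2\sigma/v_i = \sigma/c$ whether we use $v$ or $v'$; since part (ii) guarantees $v'$ is still a valid ESO vector, all hypotheses of Theorem~\ref{th:acd} remain satisfied with $v'$, and the rate in \eqref{eq:ineq-minibatch-speedup} depends on $v$ only through $\max_i v_i/p_i^2$, which is invariant. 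I expect the only mildly delicate point to be keeping careful track of the fact that the lower bound \eqref{eq:ineq-minibatch-speedup} is stated in terms of $\mM_{ii}$ rather than $v_i$, so I would note that the right-hand side of \eqref{eq:ineq-minibatch-speedup} involves neither $v$ nor $v'$ and is therefore trivially unchanged, while the left-hand side is invariant by the computation just given. None of these steps presents a genuine obstacle; the lemma is essentially a normalization observation that lets subsequent arguments assume $v = c\,p^2$ without loss of generality.
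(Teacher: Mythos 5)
Your proposal is correct and follows essentially the same route as the paper's proof: part (i) by the defining maximality of $c$, part (ii) as an immediate consequence of the coordinatewise inequality $v'\geq v$ (you merely spell out the Loewner-order step the paper leaves implicit), and part (iii) via the observation that $v'_i/p_i^2=c=\max_i v_i/p_i^2$ is invariant. No gaps.
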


\begin{proof}
\begin{enumerate}
\item[(i)] $v'_i = c p_i^2 = \frac{v_j}{p_j^2} p_i^2 = \left( \frac{v_j}{p_j^2} \frac{p_i^2}{v_i}\right) v_i \geq v_i$.
\item[(ii)] This follows directly from (i).
\item[(iii)]  Theorem~\ref{th:acd} holds with $v$ replaced by $v'$ because ESO holds. To show that the rates are unchanged first note that
$\max_i \frac{v_i}{p_i^2} = \frac{v_j}{p_j^2} = c$. On the other hand, by construction, we have $c= \frac{v'_i}{p_i^2}$ for all $i$. So, in particular, $c = \max_i \frac{v'_i}{p_i^2}$.
\end{enumerate}
\end{proof}

In view of the above lemma, we can assume without loss of generality that $v=cp^2$. Hence, the rate in \eqref{eq:ineq-minibatch-speedup}  can be written in the form
\begin{equation}\label{eq:b87gs98scc}\sqrt{\max_i \frac{v_i}{p_i^2 \sigma}}  = \sqrt{\frac{c}{\sigma}}.\end{equation}

In what follows, we will establish a lower  bound on $c$, which will  lead to the lower bound on the rate expressed as inequality~\eqref{eq:ineq-minibatch-speedup}. As a starting point, note that directly from \eqref{eq:v_def} we get the bound
\begin{equation}\label{eq:nb87d90sh}\mP \circ \mM \preceq  \Diag{p\circ v} = c \Diag{p^3}.\end{equation}
Let $\mD_1=\Diag{p}^{-1/2}$ and $\mD_2 = \Diag{p}^{-1}$. From \eqref{eq:nb87d90sh} we get
$ \mD_1 \mD_2 (\mP\circ \mM) \mD_2 \mD_1 \preceq  c \mI$ and hence
\begin{equation} \label{eq:bu987g98f} c \geq c(S,\mM) \eqdef \lambda_{\max}(\mD_1 \mD_2 (\mP\circ \mM) \mD_2 \mD_1).\end{equation}

At this point, the following identity will be useful.

\begin{lemma}\label{lem:bgfd78gd8} Let $\mA,\mB,\mD_1,\mD_2\in \R^{n\times n}$, with $\mD_1,\mD_2$ being diagonal. Then
\begin{equation} \label{eq:hadamard_diag}
\mD_1(\mA \circ \mB) \mD_2 = (\mD_1\mA \mD_2) \circ \mB =   \mA \circ(\mD_1\mB \mD_2).
\end{equation}
\end{lemma}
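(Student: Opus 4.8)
The plan is to prove the identity by a direct entrywise computation: all three expressions are matrices in $\R^{n\times n}$, so it suffices to check that their $(i,j)$ entries agree. First I would record the single fact about diagonal multiplication that drives everything. If $\mD_1$ has diagonal entries $s_1,\dots,s_n$ and $\mD_2$ has diagonal entries $t_1,\dots,t_n$, then for any $\mC\in\R^{n\times n}$ one has $(\mD_1 \mC \mD_2)_{ij} = s_i\, \mC_{ij}\, t_j$. In words, left multiplication by $\mD_1$ rescales row $i$ by $s_i$ and right multiplication by $\mD_2$ rescales column $j$ by $t_j$; the key point is that this rescaling affects only the $(i,j)$ entry of $\mC$ and mixes in no other entries, which is precisely the feature that makes diagonal multiplication interact cleanly with the Hadamard product.

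Next I would evaluate each of the three expressions at the entry $(i,j)$, using the definition $(\mA\circ\mB)_{ij}=\mA_{ij}\mB_{ij}$. The left-hand side gives $\big(\mD_1(\mA\circ\mB)\mD_2\big)_{ij} = s_i\,\mA_{ij}\mB_{ij}\,t_j$. The middle expression gives $\big((\mD_1\mA\mD_2)\circ\mB\big)_{ij} = (s_i\,\mA_{ij}\,t_j)\,\mB_{ij}$. The right-hand expression gives $\big(\mA\circ(\mD_1\mB\mD_2)\big)_{ij} = \mA_{ij}\,(s_i\,\mB_{ij}\,t_j)$. By commutativity and associativity of scalar multiplication, all three equal the common value $s_i\,\mA_{ij}\,\mB_{ij}\,t_j$, so the three matrices agree entrywise and hence coincide.

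I do not anticipate a genuine obstacle here: the only conceptual ingredient is that multiplication by a diagonal matrix is an entrywise rescaling of rows and columns, and the remainder is routine bookkeeping. The reason for isolating this lemma is presumably its intended use downstream: it lets one rewrite the two-sided diagonal conjugation $\mD_1\mD_2(\mP\circ\mM)\mD_2\mD_1$ appearing in \eqref{eq:bu987g98f} as a Hadamard product of separately rescaled matrices, thereby matching it to the form $c(S,\mM)=\lambda_{\max}(\mP'\circ\mM')$ with $\mP'$ and $\mM'$ as defined in Lemma~\ref{thm:special-ESO-result}.
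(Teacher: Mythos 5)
Your entrywise verification is correct: the observation that $(\mD_1 \mC \mD_2)_{ij} = s_i \mC_{ij} t_j$ immediately shows all three expressions have $(i,j)$ entry $s_i \mA_{ij}\mB_{ij} t_j$. The paper omits the proof entirely, calling it straightforward and citing it as a textbook exercise, so your computation simply supplies the routine argument the authors had in mind.
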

\begin{proof}
The proof is straightforward, and hence we do not include it. The identity is formulated as an exercise in \cite{FuzhenZhang}.
\end{proof}

Repeatedly applying Lemma~\ref{lem:bgfd78gd8}, we get
\[\mD_1 \mD_2 (\mP\circ \mM) \mD_2 \mD_1 = \underbrace{(\mD_1 \mP \mD_1)}_{\mP'} \circ \underbrace{(\mD_2 \mM \mD_2)}_{\mM'}.\]
Plugging this back into \eqref{eq:bu987g98f},  and since $\mP'_{ii}=1$ for all $i$, we get the bound
\begin{eqnarray} \label{eq: bi9s8g9snnn} c &\geq& c(S,\mM) =  \lambda_{\max}(\mP' \circ \mM') \geq  \max_i \; (\mP'\circ \mM')_{ii} =  \max_i \mP'_{ii} \mM'_{ii}  = \max_i \mM'_{ii} \notag \\
&=& \max_i \frac{\mM_{ii}}{p_i^2}   \geq   \frac{\left(\sum_{i=1}^n \mM_{ii}^{1/2}\right)^2}{\tau^2}.\label{eq:bdg8db8d}\end{eqnarray}
The last inequality follows by observing that the optimal solution of the optimization problem
\[\min_p \left\{\max_i \frac{\mM_{ii}}{p_i^2} \;|\; p_1,\dots,p_n > 0, \; \sum_i p_i =\tau\right\}\]
is $p_i =\tau \frac{\mM_{ii}^{1/2}}{\sum_j \mM_{jj}^{1/2}} $. Inequality~\eqref{eq:ineq-minibatch-speedup} now follows by substituting  
the lower bound on $c$ obtained in \eqref{eq:bdg8db8d} into 
\eqref{eq:b87gs98scc}.

\subsection{Proof of Lemma~\ref{thm:special-ESO-result}}
\begin{eqnarray*}
 \Diag{p_1 v_1, \dots, p_n v_n} &=& c(S,\mM)\Diag{p_1^3,\dots ,p_n^3}\\
 &=&
c(S,\mM) \mD^{3} \\
 &=&
\lambda_{\max}\left(\left( \mD^{-1/2} \mP \mD^{-1/2}\right) \circ \left( \mD^{-1}\mM \mD^{-1}\right)\right)\mD^{3}
\\
& \succeq & 
\mD^{\frac32}\left(\left( \mD^{-1/2} \mP \mD^{-1/2}\right) \circ \left( \mD^{-1}\mM \mD^{-1}\right)\right)\mD^{\frac32}
\\
&\stackrel{\eqref{eq:hadamard_diag}}{=}&
\mP\circ \mM.
\end{eqnarray*}
The last inequality came from the fact that $\mD$ is diagonal. 

\subsection{Bound on $c(S_1,\mS)$}

\begin{lemma}\label{lem:tau-nice-2nd-derivation}
$c(S_1,\mM) \leq \tfrac{n^2}{\tau^2} ((1-\beta)\max_i \mM_{ii} + \beta L).$
\end{lemma}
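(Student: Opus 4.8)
The plan is to compute $c(S_1,\mM)$ directly from its definition in Lemma~\ref{thm:special-ESO-result} and then finish with subadditivity of $\lambda_{\max}$. First I would record that for the $\tau$--nice sampling the probability matrix is $\mP = \tfrac{\tau}{n}\bigl((1-\beta)\mI + \beta\mE\bigr)$ with $\beta = \tfrac{\tau-1}{n-1}$, and that $p_i = \tfrac{\tau}{n}$ for every $i$, so $\mD = \Diag{p} = \tfrac{\tau}{n}\mI$. Consequently $\mD^{-1/2} = \sqrt{n/\tau}\,\mI$ and $\mD^{-1} = \tfrac{n}{\tau}\mI$ are scalar multiples of the identity, which makes the conjugations defining $\mP'$ and $\mM'$ trivial to evaluate.

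Next I would carry out those two evaluations: $\mP' = \mD^{-1/2}\mP\mD^{-1/2} = \tfrac{n}{\tau}\mP = (1-\beta)\mI + \beta\mE$, and $\mM' = \mD^{-1}\mM\mD^{-1} = \tfrac{n^2}{\tau^2}\mM$. Forming the Hadamard product and invoking the elementary identities $\mI\circ\mM = \Diag{\mM}$ and $\mE\circ\mM = \mM$, I obtain
\begin{equation*}
\mP'\circ\mM' = \frac{n^2}{\tau^2}\Bigl((1-\beta)\Diag{\mM} + \beta\mM\Bigr).
\end{equation*}
Since $\lambda_{\max}$ is positively homogeneous, the scalar $\tfrac{n^2}{\tau^2}$ factors out of $c(S_1,\mM) = \lambda_{\max}(\mP'\circ\mM')$.

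Finally I would bound the remaining eigenvalue using subadditivity of the map $\mA \mapsto \lambda_{\max}(\mA)$ --- the same property invoked in step $(\dagger)$ of Theorem~\ref{thm:nonacc_comp} --- together with nonnegativity of the coefficients $1-\beta$ and $\beta$:
\[
\lambda_{\max}\bigl((1-\beta)\Diag{\mM} + \beta\mM\bigr) \leq (1-\beta)\,\lambda_{\max}(\Diag{\mM}) + \beta\,\lambda_{\max}(\mM).
\]
Identifying $\lambda_{\max}(\Diag{\mM}) = \max_i \mM_{ii}$ and $\lambda_{\max}(\mM) = L$ then yields exactly the claimed bound.

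I do not anticipate a genuine obstacle: every step is an explicit computation, and the only nontrivial ingredient is the subadditivity of $\lambda_{\max}$, which is standard and already used elsewhere in the paper. The one place to be careful is tracking the powers of $n/\tau$ arising from the two different exponents ($-1/2$ in the definition of $\mP'$ and $-1$ in the definition of $\mM'$), so that the prefactor emerges as $n^2/\tau^2$ rather than some other power of $n/\tau$.
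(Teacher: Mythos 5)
Your proposal is correct and follows essentially the same route as the paper: exploit that $\mD = \tfrac{\tau}{n}\mI$ is a scalar matrix to compute $\mP'\circ\mM' = \tfrac{n^2}{\tau^2}\bigl((1-\beta)\Diag{\mM} + \beta\mM\bigr)$ explicitly, then bound the largest eigenvalue. The only cosmetic difference is in the last step, where the paper substitutes $\mM \preceq L\mI$ into the second term and reads off the maximum of the resulting diagonal matrix, while you invoke subadditivity of $\lambda_{\max}$; both are standard one-line facts yielding the identical bound.
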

\begin{proof} Recall that the probability matrix of $S_1$ is $\mP = \tfrac{\tau}{n}\left((1-\beta) \mI + \beta \mE\right)$. Since $p_i=\tfrac{\tau}{n}$ and $\mM \preceq L \mI$, we have
\begin{eqnarray*}
c(S_1,\mM) &=& \lambda_{\max}\left(\mP' \circ \mM' \right) \\
&=& \lambda_{\max}\left( (\mD^{-1/2}\mP \mD^{-1/2}) \circ (\mD^{-1}\mM \mD^{-1}) \right) \\ 
&=& \lambda_{\max} \left( \tfrac{\tau}{n}\left((1-\beta) \mD^{-1} + \beta \mD^{-1/2}\mE \mD^{-1/2}\right) \circ \mD^{-1} \mM \mD^{-1} \right)\\
&=&\tfrac{\tau}{n} \lambda_{\max} \left( \left((1-\beta) \mD^{-1} + \beta \mD^{-1/2}\mE \mD^{-1/2}\right) \circ \mD^{-1} \mM \mD^{-1} \right)\\
&=& \tfrac{\tau}{n}  \lambda_{\max}  \left((1-\beta) \Diag{\mM_{ii}/p_i^3} + \beta \mD^{-3/2}\mM \mD^{-3/2}\right) \\
&\preceq  & \tfrac{\tau}{n}  \lambda_{\max}  \left((1-\beta) \Diag{\mM_{ii}/p_i^3} + \beta L \mD^{-3} \right) \\
&=& \tfrac{\tau}{n}  \lambda_{\max}  \left((1-\beta) \tfrac{n^3}{\tau^3}\max_i \mM_{ii}+ \beta L \tfrac{n^3}{\tau^3} \right) \\
&=& \tfrac{n^2}{\tau^2} \left((1-\beta) \max_i \mM_{ii} + \beta L \right).
\end{eqnarray*}
\end{proof}

\subsection{Proof of Theorem~\ref{thm:comparison}}
For the purpose of this proof, let $S_2$ be the independent uniform sampling with mini-batch size $\tau$. That is, for all  $i$ we independently decide whether $i\in S$, and do so by picking $i$ with probability $p_i = \tfrac{\tau}{n}$. Recall that $S_3$ is the independent importance sampling.

For simplicity, let  $\mP_i$ be the probability matrix of sampling $S_i$,  $\mD_i\eqdef \Diag{\mP_i}$, and $\mM'_i\eqdef\mD^{-1/2}_i\mM \mD^{-1/2}_i$, for $i=1,3$. Next, we have
\begin{eqnarray}
c(S_i,\mM)&=& \lambda_{\max}\left(\left( \mD_i^{-1/2} \mP_i \mD_i^{-1/2}\right) \circ \left( \mD_i^{-1}\mM \mD_i^{-1}\right)\right)
\nonumber
\\
&\stackrel{\eqref{eq:hadamard_diag}}{=}& \lambda_{\max}\left(\left( \mD_i^{-1} \mP_i \mD_i^{-1}\right) \circ \left( \mD_i^{-1/2}\mM \mD_i^{-1/2}\right)\right)
\nonumber
\\
\nonumber
&=& \lambda_{\max}\left(\left( \mE + \mD_i^{-1}-\mI  \right) \circ \mM_i'\right)
\\
&=& \lambda_{\max}\left(\mM_i' + \Diag{\mM_i'}\circ(\mD_i^{-1}-\mI ) \right),\label{eq:cs_simple}
\end{eqnarray}
where the third identity holds since both $S_i$ is an independent sampling, which means that $\left( \mD_i^{-1} \mP_i \mD_i^{-1}\right)_{kl} = \tfrac{p_{kl}}{p_{k}p_l}$, where $p=\Diag{\mD_i}$.

Denote $c_i\eqdef c(S_i,\mM)$. Thus for $S_2$ we have
\begin{equation}
c_2 =\frac{\tau}{n}\lambda_{\max}\left( \mM +\frac{n-\tau}{\tau} \Diag{\mM} \right).\label{eq:c2_acc}
\end{equation}

Let us now establish a technical lemma.

\begin{lemma}\label{lem:import_main}
\begin{equation}\label{eq:p_choice_ineq}
 \lambda_{\max}\left(\mM'_3 +\diag(\mM'_3)\circ(\mD^{-1}_3-\mI ) \right)
\leq
 \frac{2n-\tau}{n-\tau}\lambda_{\max}\left(\mM'_3 +\frac{n-\tau}{\tau} \Diag{\mM'_3}\right)
\end{equation}
\end{lemma}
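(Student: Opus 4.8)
The plan is to exploit the very special structure that the defining relation of $S_3$ imposes on the probabilities $p_i$, which will collapse the Hadamard-product correction term on the left-hand side of \eqref{eq:p_choice_ineq} into a scalar multiple of the identity; everything after that is routine eigenvalue bookkeeping.

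First I would unpack the definition \eqref{eq:p_imp_def_acc1}: squaring and simplifying shows it is equivalent to the clean identity $\delta p_i^2 = 2\mM_{ii}(1-p_i)$, hence $\mM_{ii}=\frac{\delta p_i^2}{2(1-p_i)}$ (and in particular $p_i<1$). Since $\mD_3 = \Diag{p}$ and $\mM'_3 = \mD_3^{-1/2}\mM\mD_3^{-1/2}$, the diagonal of $\mM'_3$ is $(\mM'_3)_{ii} = \mM_{ii}/p_i = \frac{\delta p_i}{2(1-p_i)}$. The crux is then to observe that the diagonal matrix $\Diag{\mM'_3}\circ(\mD_3^{-1}-\mI)$ has $i$-th entry $\frac{\delta p_i}{2(1-p_i)}\cdot\frac{1-p_i}{p_i}=\frac{\delta}{2}$, independent of $i$; that is, $\Diag{\mM'_3}\circ(\mD_3^{-1}-\mI)=\frac{\delta}{2}\mI$. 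Consequently the left-hand side of \eqref{eq:p_choice_ineq} equals $\lambda_{\max}\!\left(\mM'_3+\tfrac{\delta}{2}\mI\right)=\lambda_{\max}(\mM'_3)+\frac{\delta}{2}$. I regard this collapse as the heart of the lemma and the one place that genuinely uses the choice of $p_i$.

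Next I would split the constant as $\frac{2n-\tau}{n-\tau}=1+\frac{n}{n-\tau}$ and bound the two resulting copies of $\lambda_{\max}(B)$ separately, where $B\eqdef \mM'_3+\frac{n-\tau}{\tau}\Diag{\mM'_3}$. For the term with coefficient $1$, positivity of $\frac{n-\tau}{\tau}\Diag{\mM'_3}$ gives $\lambda_{\max}(B)\geq\lambda_{\max}(\mM'_3)$. For the second term I would use $\lambda_{\max}(B)\geq\max_i B_{ii}=\frac{n}{\tau}\max_i(\mM'_3)_{ii}$, combined with $\max_i p_i\geq \tau/n$ (from $\sum_i p_i=\tau$) and the monotonicity of $t\mapsto t/(1-t)$, to obtain $\max_i(\mM'_3)_{ii}=\frac{\delta}{2}\max_i\frac{p_i}{1-p_i}\geq \frac{\delta}{2}\cdot\frac{\tau}{n-\tau}$. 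Chaining these yields $\frac{n}{n-\tau}\lambda_{\max}(B)\geq \frac{n^2}{(n-\tau)^2}\cdot\frac{\delta}{2}\geq \frac{\delta}{2}$, the last step using $0<n-\tau\leq n$.

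Adding the two bounds gives $\frac{2n-\tau}{n-\tau}\lambda_{\max}(B)\geq \lambda_{\max}(\mM'_3)+\frac{\delta}{2}$, which is precisely the claim that the right-hand side of \eqref{eq:p_choice_ineq} dominates the left-hand side. The only real subtlety is the first step; the subsequent inequalities are all instances of $\lambda_{\max}(X+Y)\geq\lambda_{\max}(X)$ for $Y\succeq 0$ and $\lambda_{\max}(X)\geq\max_i X_{ii}$, so I expect no obstacle there.
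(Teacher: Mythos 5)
Your proof is correct, and it follows essentially the same route as the paper: the paper's (one-line) proof also rests on the observation that the defining relation of $S_3$ collapses $\Diag{\mM'_3}\circ(\mD_3^{-1}-\mI)$ to a scalar multiple of the identity, after which it repeats the eigenvalue chain from Theorem~\ref{thm:nonacc_comp}(i) with the split $\tfrac{2n-\tau}{n-\tau}=1+\tfrac{n}{n-\tau}$. The only cosmetic difference is that you bound $\tfrac{\delta}{2}$ via $\max_i p_i\geq\tau/n$ and monotonicity of $t\mapsto t/(1-t)$, whereas the paper's template uses the trace bound $\tfrac{\delta}{2}\leq\tfrac{1}{\tau}\trace{\mM'_3}\leq\tfrac{n}{\tau}\max_i(\mM'_3)_{ii}$; both land in the same place.
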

\begin{proof}
The statement follows immediately repeating the steps of the proof of (i) from Theorem~\ref{thm:nonacc_comp} using the fact that for sampling $S_3$ we have $p_i/\mM_{ii}\propto p_i^{-1}-1$. 
\end{proof}

We can now proceed with comparing $c_2$ to $c_3$.
\begin{eqnarray}
c_3 &=&  \lambda_{\max}\left(\mM'_3 + \Diag{\mM'_3}\circ(\mD^{-1}_3-\mI ) \right) \nonumber \\
&\stackrel{\eqref{eq:p_choice_ineq}}{\leq}  &
 \frac{2n-\tau}{n-\tau}\lambda_{\max}\left(\mM'_3 +\frac{n-\tau}{\tau} \Diag{\mM'_3}\right) \nonumber \\
 &\stackrel{(*)}{\leq}&
\frac{2n-\tau}{n-\tau}
\lambda_{\max}\left(n  \Diag{\mM'_3} +\frac{n-\tau}{\tau} \Diag{\mM'_3}\right) \nonumber\\
&=&
\frac{2n-\tau}{n-\tau} \frac{n\tau+n-\tau}{\tau}\lambda_{\max}\left( \Diag{\mM'_3}\right) \nonumber\\
&\stackrel{(**)}{\leq}&
\frac{2n-\tau}{n-\tau} \frac{n\tau+n-\tau}{\tau} \frac{\tau}{n}\lambda_{\max}\left( \Diag{\mM}\right) \nonumber\\
&\leq&
\frac{2n-\tau}{n-\tau} \frac{n\tau+n-\tau}{\tau} \frac{\tau}{n}\frac{\tau}{n-\tau}\lambda_{\max}\left(\mM+\frac{n-\tau}{\tau} \Diag{\mM}\right)  \nonumber \\
&\stackrel{\eqref{eq:cs_simple}}{=}&
\frac{2n-\tau}{n-\tau} \frac{n\tau+n-\tau}{\tau} \frac{\tau}{n-\tau} c_2 \label{eq:c3c2cmp}
\end{eqnarray}

Above, inequality  $(*)$ holds since for any $n\times n$ matrix $\mQ\succ 0$ we have $\mQ\preceq n\Diag{\mQ}$ and inequality $(**)$ holds since $(\mD_3)_{ii}\geq(\mD_3)_{jj} $ if and only if $\mM_{ii}\geq \mM_{jj}$ due to choice of $p$.

Let us now compare to $c_2$ and $c_1$. We have
\begin{eqnarray}
c_1&=& \lambda_{\max}\left(\left( \mD_1^{-1/2} \mP_1 \mD_1^{-1/2}\right) \circ \left( \mD_1^{-1}\mM \mD_1^{-1}\right)\right)
\nonumber
\\
&\stackrel{\eqref{eq:hadamard_diag}}{=}& \lambda_{\max}\left(\left( \mD_1^{-1} \mP_1 \mD_1^{-1}\right) \circ \left( \mD_1^{-1/2}\mM \mD_1^{-1/2}\right)\right)
\nonumber
\\
\nonumber
&=& \lambda_{\max}\left(\left( \frac{\tau-1}{n-1}\frac{n}{\tau} \mE + \frac{n}{\tau} \mI-\frac{\tau-1}{n-1}\frac{n}{\tau}\mI  \right) \circ \mM''_1\right)
\\
\nonumber
&=& \frac{n}{\tau}\lambda_{\max}\left(\frac{\tau-1}{n-1}  \mM''_1 + \frac{n-\tau}{n-1} \Diag{ \mM''_1}  \right)
\\ \label{eq:c1_acc}
&=& \left(\frac{n}{\tau}\right)^2\lambda_{\max}\left(\frac{\tau-1}{n-1}  \mM + \frac{n-\tau}{n-1} \Diag{\mM}  \right).
\end{eqnarray}
As~\eqref{eq:c2_acc} and~\eqref{eq:c1_acc} are established, following the proof of (ii) from Theorem~\ref{thm:nonacc_comp}, we arrive at
\begin{equation}\label{eq:c1c2cmp}
c_2 \leq \frac{(n-1)\tau}{n(\tau-1)} c_1 \leq 2 c_1.
\end{equation}
It remains to combine~\eqref{eq:c3c2cmp} and~\eqref{eq:c1c2cmp} to establish~\eqref{eq:thm_comparison}. 

As an example where $c_3\approx \left(\frac{\tau}{n}\right)^2 c_2$, we propose Example~\ref{ex:acc_imp_diff}.

\begin{example}\label{ex:acc_imp_diff}
Consider $n\geq 1$, choose any $n\geq \tau \geq 1 $ and 
\[
\mM\eqdef\begin{pmatrix}
N &0^\top \\ 0 & \mI 
\end{pmatrix}
\]
for $\mI\in \R^{(n-1)\times (n-1)}$. Then, it is easy to verify that $c_1\stackrel{\eqref{eq:c1_acc}}{=}\left(\frac{n}{\tau}\right)^2N$. 
Moreover, for large enough $N$ we have 
\[
p\approx \left(1,\tfrac{\tau-1}{n-1},\dots, \tfrac{\tau-1}{n-1}\right)^\top \qquad \Rightarrow \qquad
\mM'_3 \approx  \Diag{ N, \frac{n-1}{\tau-1},\dots,\frac{n-1}{\tau-1} }. 
\]
Therefore, using~\eqref{eq:cs_simple} and again for large enough $N$, we get $c_3\approx N$. Thus, $c_3\approx \left(\frac{\tau}{n}\right)^2 c_2$. 
\end{example}

\clearpage
\section{Extra Experiments \label{sec:extra_exp}}

In this section we present additional numerical experiments. We first present some synthetic examples in Section~\ref{sec:artif} in order to have better understanding of both acceleration and importance sampling, and to see how it performs on what type of data. We also study how minibatch size influences the convergence rate. 

Then, in Section~\ref{exp:logreg}, we work with  logistic regression problem on LibSVM~\cite{chang2011libsvm} data.  For small datasets, we choose the parameters of \texttt{ACD} as theory suggests and for large ones, we estimate them, as we describe in the main body of the paper. Lastly, we tackle dual of SVM problem with squared hinge loss, which we present in Section~\ref{sec:SVM}. 

In most of plots we compare of both accelerated and non-accelerated \texttt{CD} with all samplings $S_1,S_2,S_3$ introduced in Sections~\ref{sec:sam1}, \ref{sec:sam2} and \ref{sec:sam3} respectively. We refer to \texttt{ACD} with sampling $S_3$ as \texttt{AN} (\texttt{A}ccelerated \texttt{N}onuniform), \texttt{ACD} with sampling $S_1$ as \texttt{AU}, \texttt{ACD} with sampling $S_2$ as \texttt{AN2}, \texttt{CD} with sampling $S_3$ as \texttt{NN}, \texttt{CD} with sampling $S_1$as \texttt{NU} and \texttt{CD} with sampling $S_2$ as \texttt{NN2}. As for Sampling 2, it might happen that probabilities become larger than one if $\tau$ is large (see Section~\ref{sec:sam2}), we set those probabilities to 1 while keeping the rest as it is. 

We compare the mentioned methods for various choices of the expected minibatch sizes $\tau$ and several problems.

\subsection{Synthetic quadratics~\label{sec:artif}}
As we mentioned, the goal of this section is to provide a better understanding of both acceleration and importance sampling. For this purpose we consider as simple setting as possible -- minimizing quadratic
\begin{equation}\label{eq:quadratic}
f(x)=\tfrac12 x^\top \mM x-b^\top x,
\end{equation}
where $b\sim N(0,I)$ and $\mM$ is chosen as one of the 5 types, as the following table suggests. 

\begin{table}[H]
\centering \small
\begin{tabular}{ |c|c|  }
 \hline
 {\bf Problem type} & $\mM$\\
 \hline
 \hline
1 & $\mA^\top \mA + \mI$ for $\mA^{\frac{n}{2}\times n}$;  have independent entries from $N(0,1)$
\\  \hline
2 & 
$\mA^\top \mA + \mI$ for $\mA^{2n\times n}$;  have independent entries from $N(0,1)$
\\ \hline
3 & 
$\diag(1,2,\dots,n)$
\\ \hline
4 & 
$\mA + \mI$, $\mA_{n,n}=n$, $\mA_{1:(n-1),1:(n-1)}=1$, $\mA_{1:(n-1),n}= \mA_{n,1:(n-1)}=0$
\\ \hline
5 & 
$\mA^\top \mD \mA + \mI$ for $\mA^{\frac{n}{2}\times n}$;  have independent entries from $N(0,1)$, $\mD=\frac{1}{\sqrt{n}}\Diag{1,2,\dots,n}$ 
\\ 
 \hline
\end{tabular}
\caption{Problem types}\label{tab:types}
\end{table}
In the first example we perform (Figure~\ref{fig:artif_1}), we compare the performance of both accelerated and non-accelerated algorithm with both nonuniform and $\tau$ nice sampling on problems as per Table~\ref{tab:types}. In all experiments, we set $n=1 000$ and we plot a various choices of $\tau$. 

\clearpage
\subsubsection{Comparison of methods on synthetic data \label{sec:marginal}}
Figure~\ref{fig:artif_1} presents the numerical performance of \texttt{ACD} for various types of synthetic problems given by~\eqref{eq:quadratic} and Table~\ref{tab:types}. It suggests what our theory shows -- that accelerated algorithm is always faster than its non-accelerated counterpart, and on top of that, performance of $\tau$--nice sampling ($S_1$) can be negligibly faster than  importance sampling ($S_2,S_3$), but is usually significantly slower. A significance of the importance sampling is mainly demonstrated on problem type 4, which roughly coincides with Examples~\ref{ex:nonacc_imp_diff} and~\ref{ex:acc_imp_diff}. Figure~\ref{fig:artif_1} presents Sampling 2 only for the cases when the bound on $\tau$ form Section~\ref{sec:sam2} is satisfied. 
\begin{figure}[H]
\centering
\begin{minipage}{0.25\textwidth}
  \centering
\includegraphics[width =  \textwidth ]{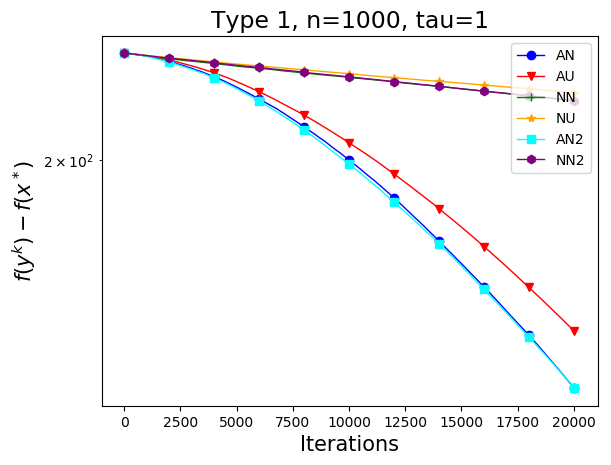}
\end{minipage}%
\begin{minipage}{0.25\textwidth}
  \centering
\includegraphics[width =  \textwidth ]{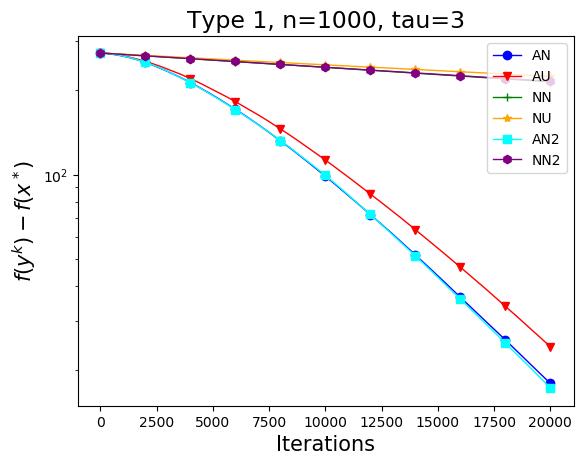}
\end{minipage}%
\begin{minipage}{0.25\textwidth}
  \centering
\includegraphics[width =  \textwidth ]{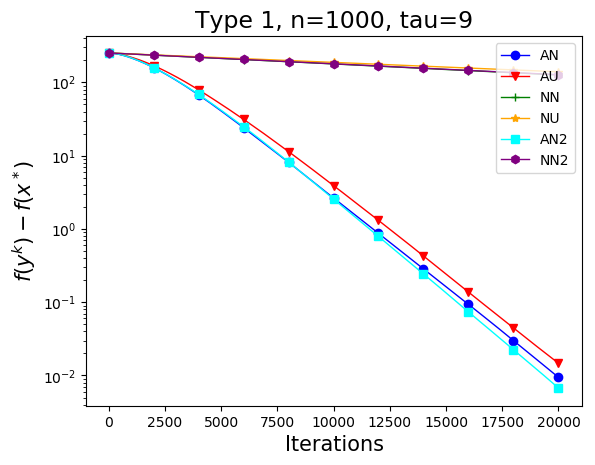}
\end{minipage}%
\begin{minipage}{0.25\textwidth}
  \centering
\includegraphics[width =  \textwidth ]{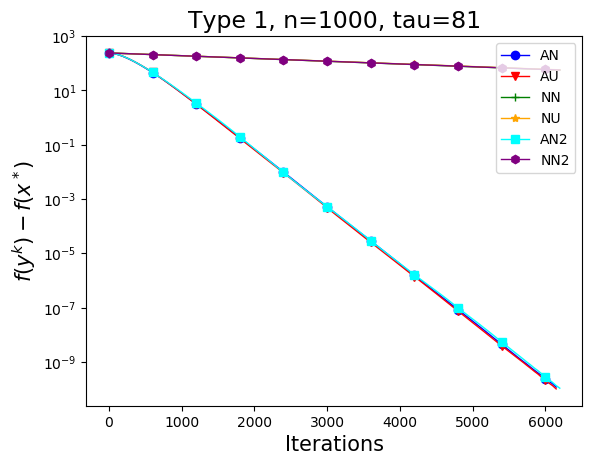}
\end{minipage}%
\\
\begin{minipage}{0.25\textwidth}
  \centering
\includegraphics[width =  \textwidth ]{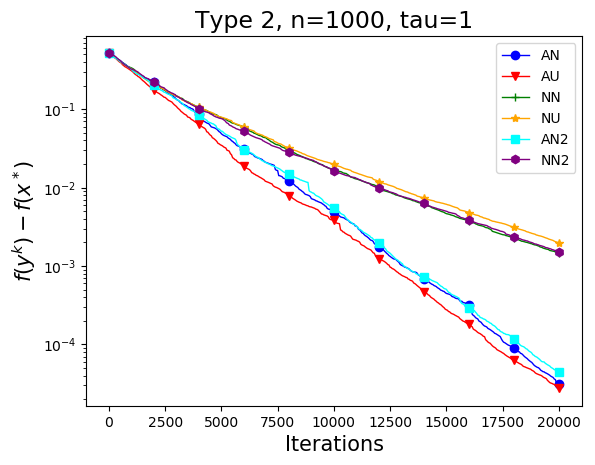}
\end{minipage}%
\begin{minipage}{0.25\textwidth}
  \centering
\includegraphics[width =  \textwidth ]{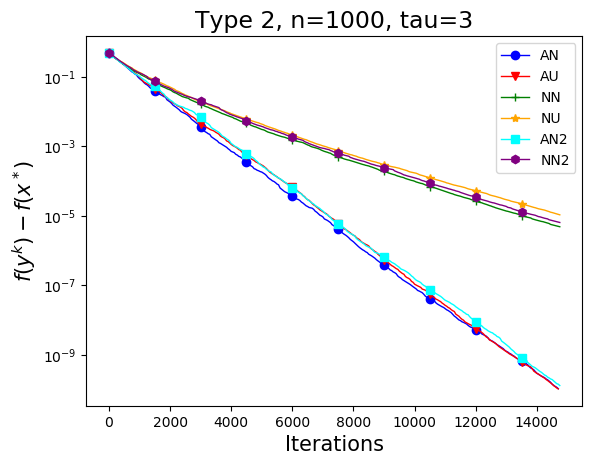}
\end{minipage}%
\begin{minipage}{0.25\textwidth}
  \centering
\includegraphics[width =  \textwidth ]{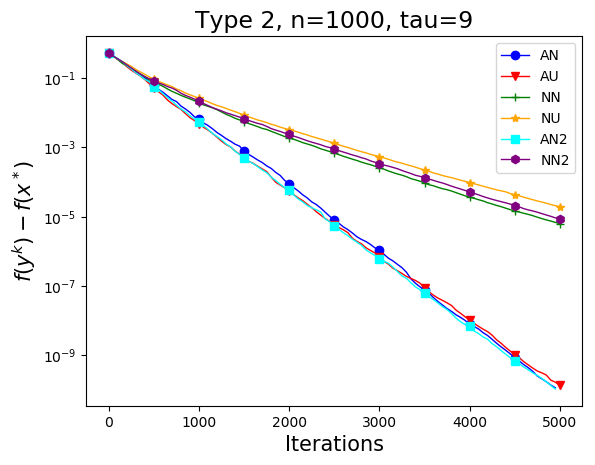}
\end{minipage}%
\begin{minipage}{0.25\textwidth}
  \centering
\includegraphics[width =  \textwidth ]{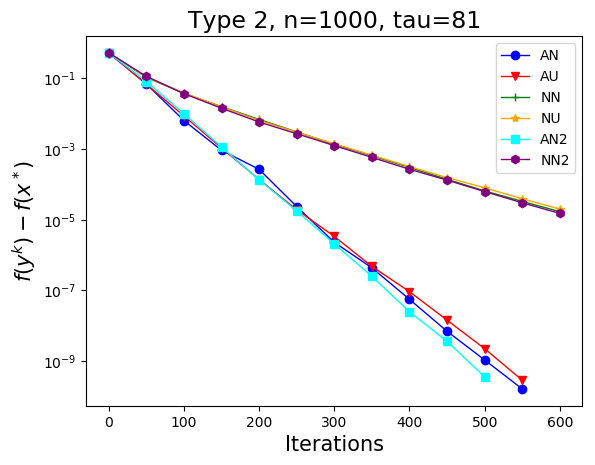}
\end{minipage}%
\\
\begin{minipage}{0.25\textwidth}
  \centering
\includegraphics[width =  \textwidth ]{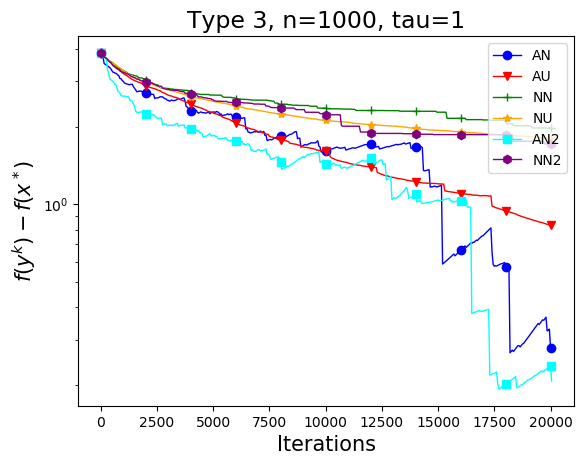}
\end{minipage}%
\begin{minipage}{0.25\textwidth}
  \centering
\includegraphics[width =  \textwidth ]{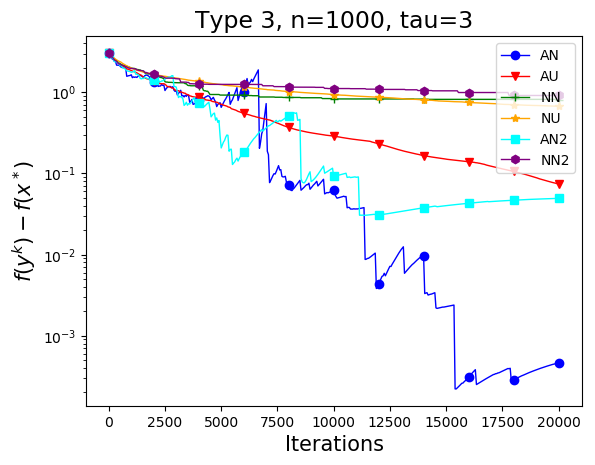}
\end{minipage}%
\begin{minipage}{0.25\textwidth}
  \centering
\includegraphics[width =  \textwidth ]{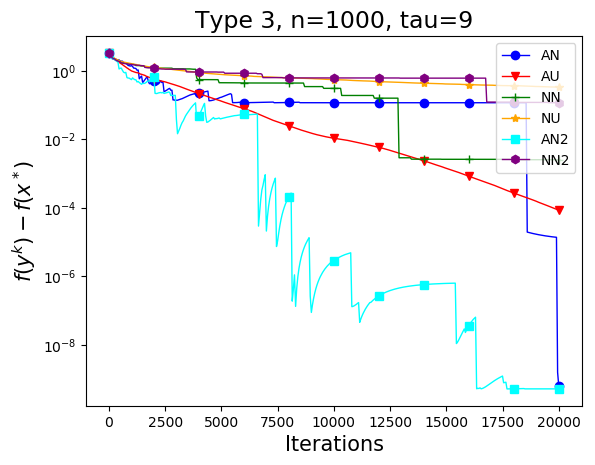}
\end{minipage}%
\begin{minipage}{0.25\textwidth}
  \centering
\includegraphics[width =  \textwidth ]{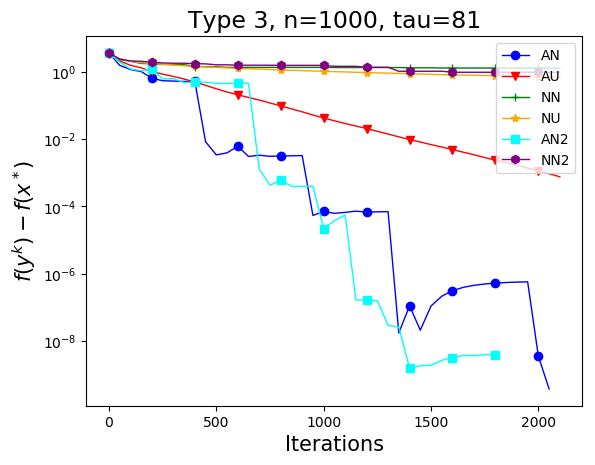}
\end{minipage}%
\\
\begin{minipage}{0.25\textwidth}
  \centering
\includegraphics[width =  \textwidth ]{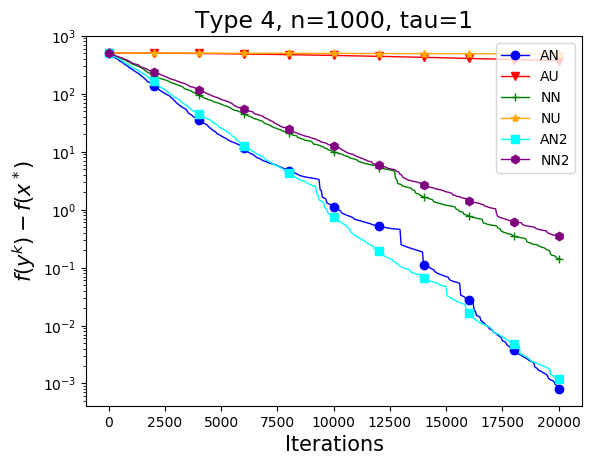}
\end{minipage}%
\begin{minipage}{0.25\textwidth}
  \centering
\includegraphics[width =  \textwidth ]{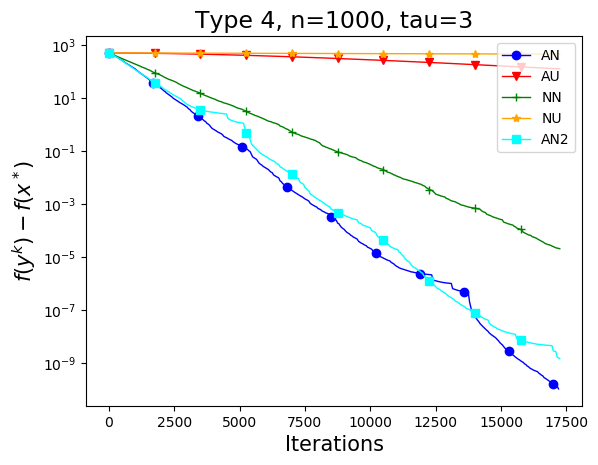}
\end{minipage}%
\begin{minipage}{0.25\textwidth}
  \centering
\includegraphics[width =  \textwidth ]{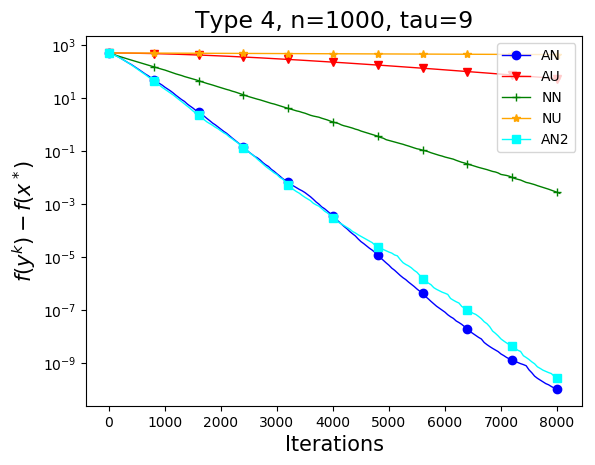}
\end{minipage}%
\begin{minipage}{0.25\textwidth}
  \centering
\includegraphics[width =  \textwidth ]{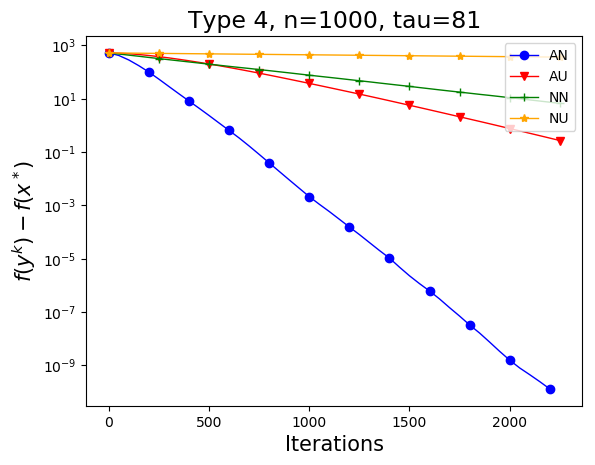}
\end{minipage}%
\\
\begin{minipage}{0.25\textwidth}
  \centering
\includegraphics[width =  \textwidth ]{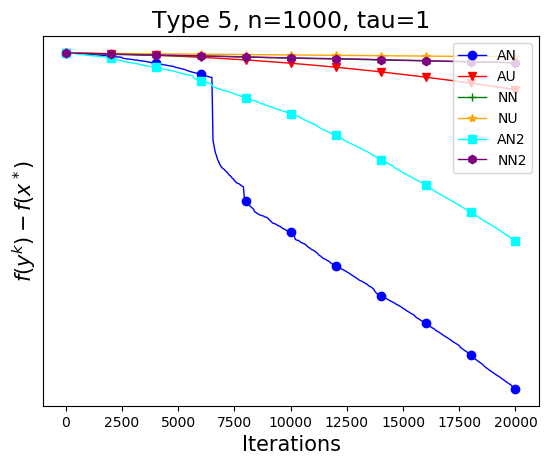}
\end{minipage}%
\begin{minipage}{0.25\textwidth}
  \centering
\includegraphics[width =  \textwidth ]{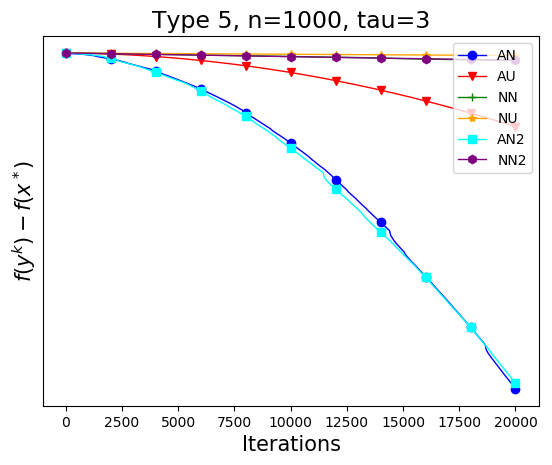}
\end{minipage}%
\begin{minipage}{0.25\textwidth}
  \centering
\includegraphics[width =  \textwidth ]{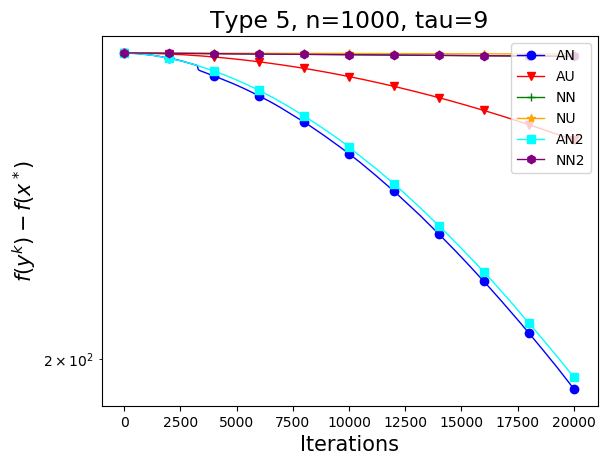}
\end{minipage}%
\begin{minipage}{0.25\textwidth}
  \centering
\includegraphics[width =  \textwidth ]{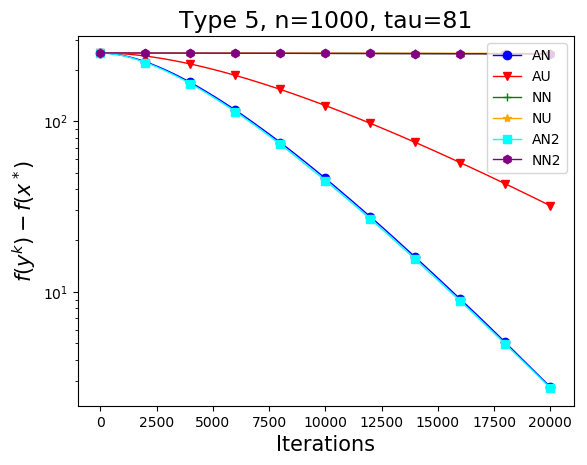}
\end{minipage}%
\\
\caption{Comparison of accelerated, nonaccelerated algorithm with both importance and $\tau$ nice sampling for a various quadratic problems.}\label{fig:artif_1}
\end{figure}

\clearpage

\subsubsection{Speedup in $\tau$ \label{sec:tau}}
The next experiment shows an empirical speedup for the coordinate descent algorithms for a various types of problems. For simplicity, we do not include Sampling 2. Figure~\ref{fig:tau_comp} provides the results. Oftentimes, the empirical speedup (in terms of the number of iteration) in $\tau$ is close to linear, which demonstrates the power and significance of minibatching.

\begin{figure}[H]
\centering
\begin{minipage}{0.25\textwidth}
  \centering
\includegraphics[width =  \textwidth ]{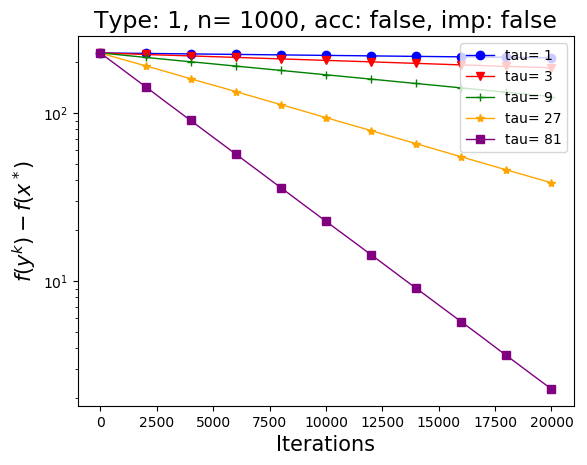}
\end{minipage}%
\begin{minipage}{0.25\textwidth}
  \centering
\includegraphics[width =  \textwidth ]{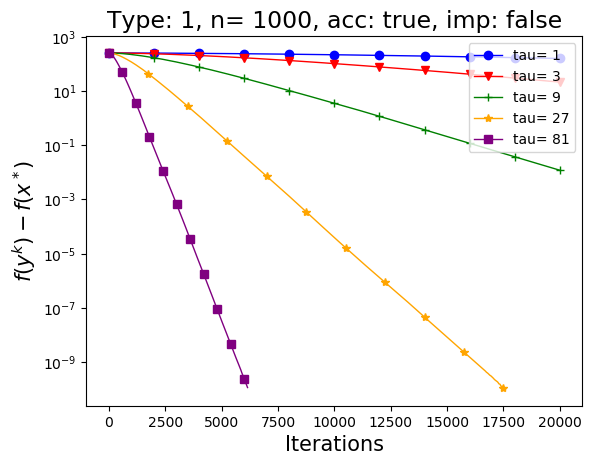}
\end{minipage}%
\begin{minipage}{0.25\textwidth}
  \centering
\includegraphics[width =  \textwidth ]{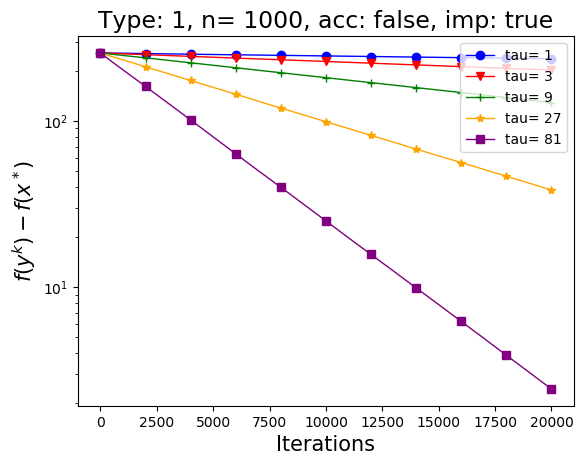}
\end{minipage}%
\begin{minipage}{0.25\textwidth}
  \centering
\includegraphics[width =  \textwidth ]{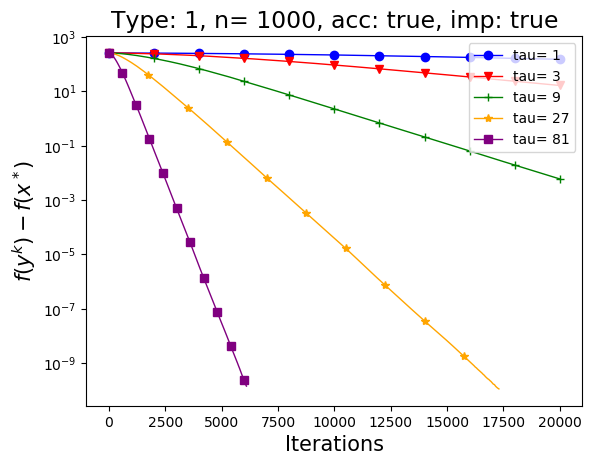}
\end{minipage}%
\\
\begin{minipage}{0.25\textwidth}
  \centering
\includegraphics[width =  \textwidth ]{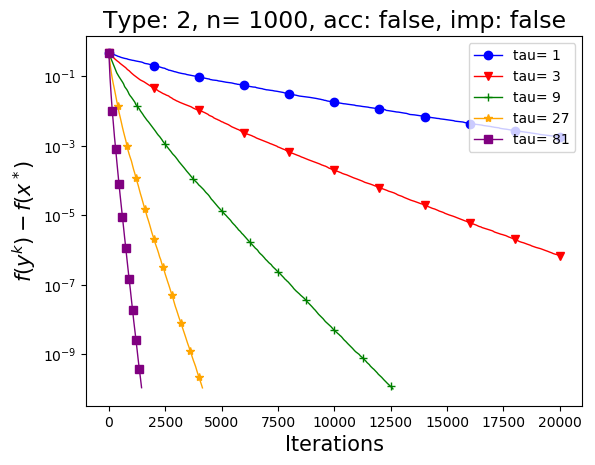}
\end{minipage}%
\begin{minipage}{0.25\textwidth}
  \centering
\includegraphics[width =  \textwidth ]{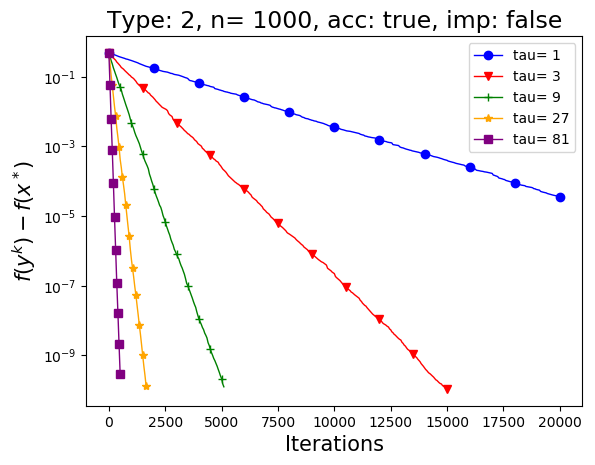}
\end{minipage}%
\begin{minipage}{0.25\textwidth}
  \centering
\includegraphics[width =  \textwidth ]{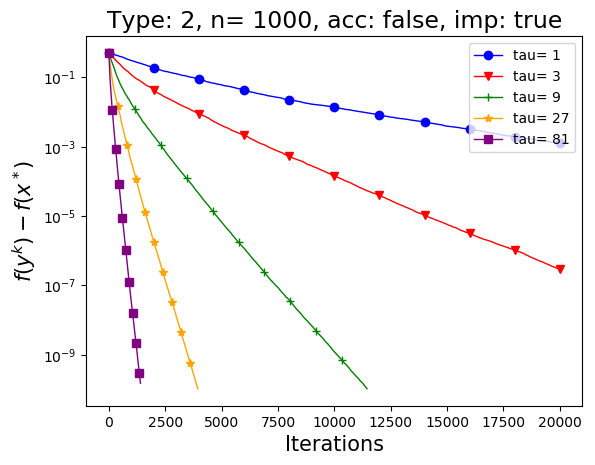}
\end{minipage}%
\begin{minipage}{0.25\textwidth}
  \centering
\includegraphics[width =  \textwidth ]{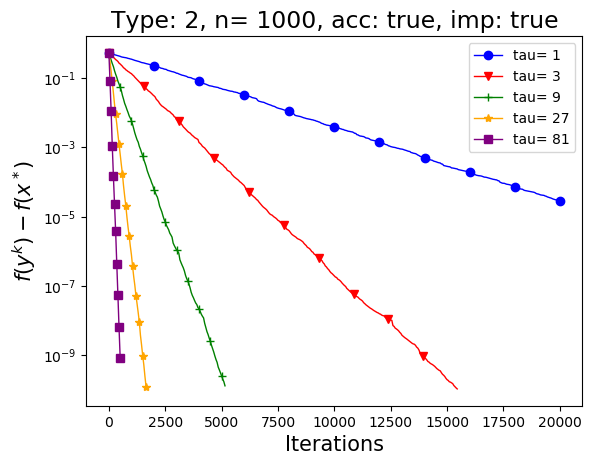}
\end{minipage}%
\\
\begin{minipage}{0.25\textwidth}
  \centering
\includegraphics[width =  \textwidth ]{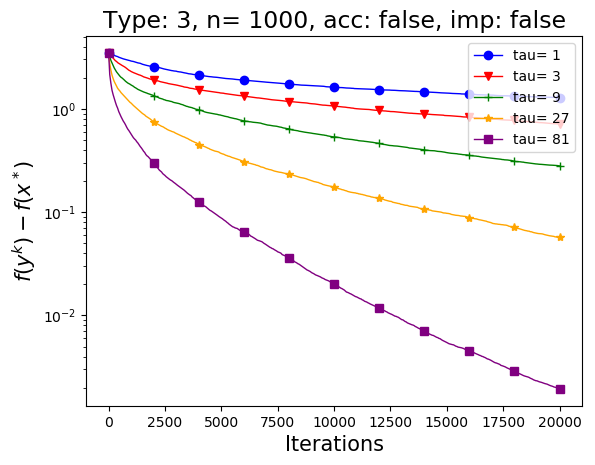}
\end{minipage}%
\begin{minipage}{0.25\textwidth}
  \centering
\includegraphics[width =  \textwidth ]{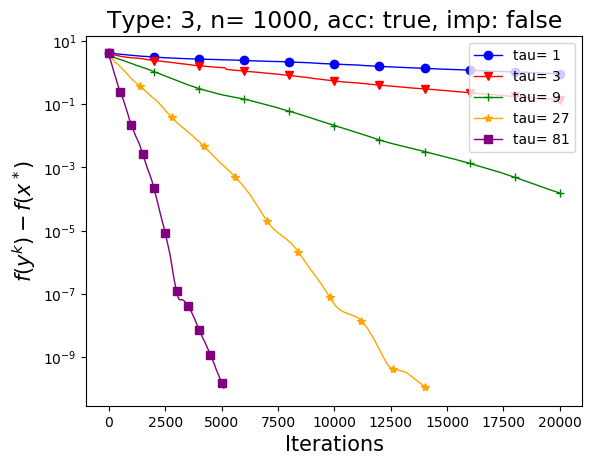}
\end{minipage}%
\begin{minipage}{0.25\textwidth}
  \centering
\includegraphics[width =  \textwidth ]{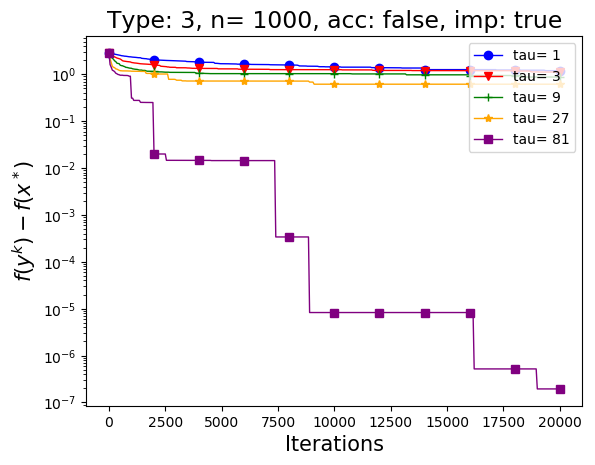}
\end{minipage}%
\begin{minipage}{0.25\textwidth}
  \centering
\includegraphics[width =  \textwidth ]{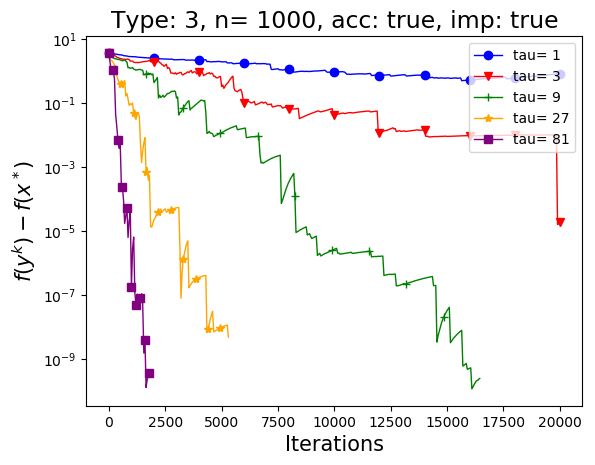}
\end{minipage}%
\\
\begin{minipage}{0.25\textwidth}
  \centering
\includegraphics[width =  \textwidth ]{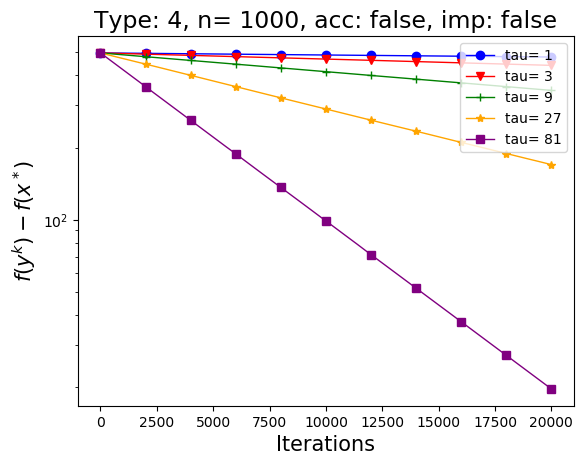}
\end{minipage}%
\begin{minipage}{0.25\textwidth}
  \centering
\includegraphics[width =  \textwidth ]{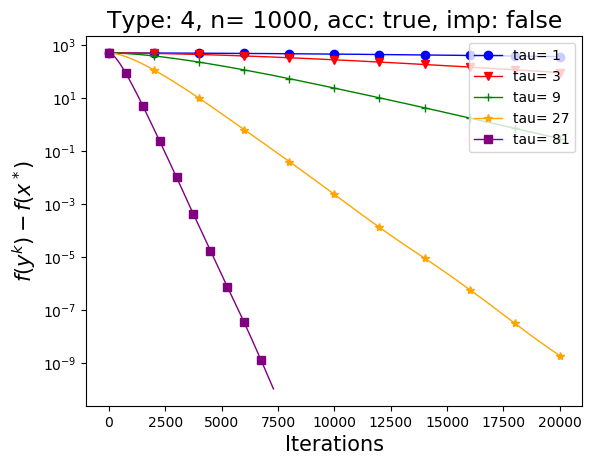}
\end{minipage}%
\begin{minipage}{0.25\textwidth}
  \centering
\includegraphics[width =  \textwidth ]{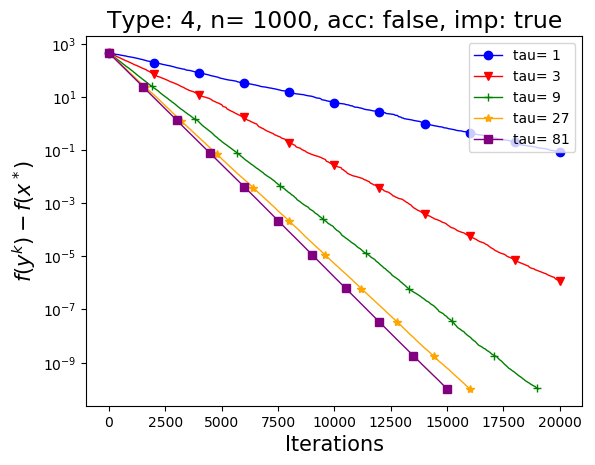}
\end{minipage}%
\begin{minipage}{0.25\textwidth}
  \centering
\includegraphics[width =  \textwidth ]{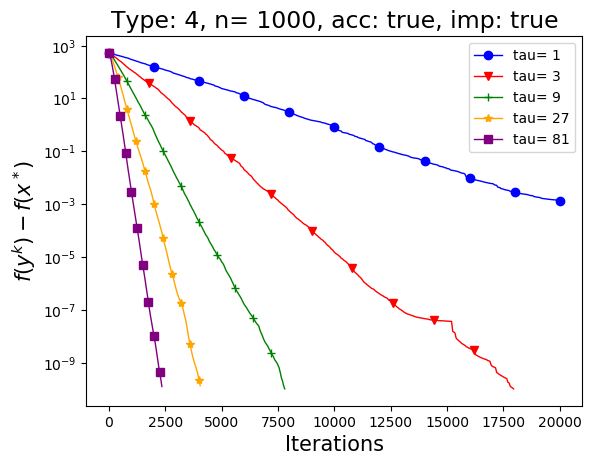}
\end{minipage}%
\\
\begin{minipage}{0.25\textwidth}
  \centering
\includegraphics[width =  \textwidth ]{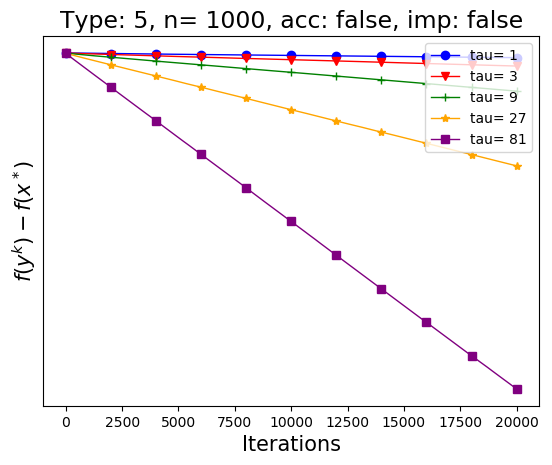}
\end{minipage}%
\begin{minipage}{0.25\textwidth}
  \centering
\includegraphics[width =  \textwidth ]{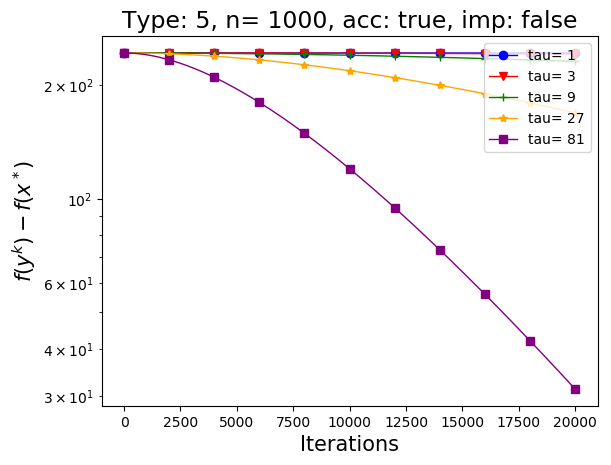}
\end{minipage}%
\begin{minipage}{0.25\textwidth}
  \centering
\includegraphics[width =  \textwidth ]{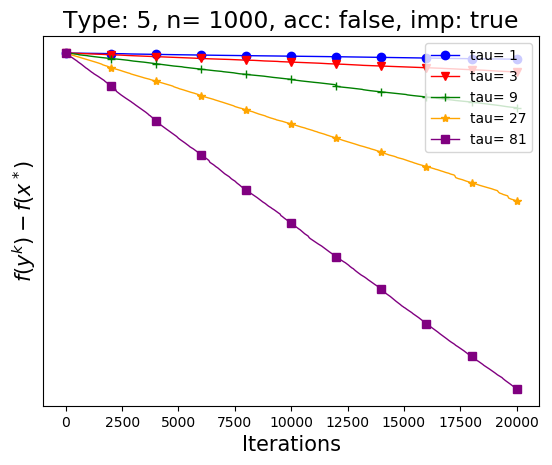}
\end{minipage}%
\begin{minipage}{0.25\textwidth}
  \centering
\includegraphics[width =  \textwidth ]{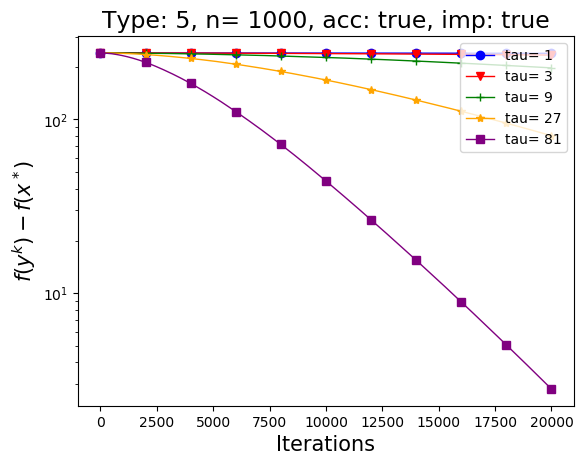}
\end{minipage}%
\\
\caption{Comparison of speedup gained by both $\tau$-nice sampling and importance sampling with and without acceleration on various quadratic problems.} \label{fig:tau_comp}
\end{figure}

\clearpage
\subsection{Logistic Regression \label{exp:logreg}}

In this section we apply \texttt{ACD} on the regularized logistic regression problem, i.e.
\[
f(x)= \frac1m \sum_{i=1}^m \log \left(1+\exp\left(\mA_{i,:}x\cdot  b\right) \right)+\frac{\lambda}{2} \| x\|^2,
\]
for $b\in \{-1,1 \}$ and data matrix $\mA$ comes from LibSVM. In each experiment in this section, we have chosen regularization parameter $\lambda$ to be the average diagonal element of the smoothness matrix. We first apply the methods with the optimal parameters as our theory suggests on smaller datasets. On larger ones (Section~\ref{sec:practical}), we set them in a cheaper way, which is not guaranteed to work by theory we provide. 

In our first experiment, we apply \texttt{ACD} on LibSVM data directly for various minibatch sizes $\tau$. Figure~\ref{fig:logreg_noncor} shows the results. As expected, \texttt{ACD} is always better to \texttt{CD}, and importance sampling is always better to uniform one. 
 
\begin{figure}[H]
\centering
\begin{minipage}{0.25\textwidth}
  \centering
\includegraphics[width =  \textwidth ]{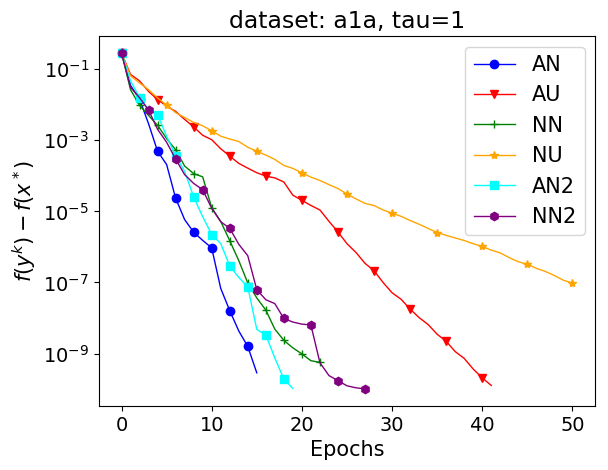}
\end{minipage}%
\begin{minipage}{0.25\textwidth}
  \centering
\includegraphics[width =  \textwidth ]{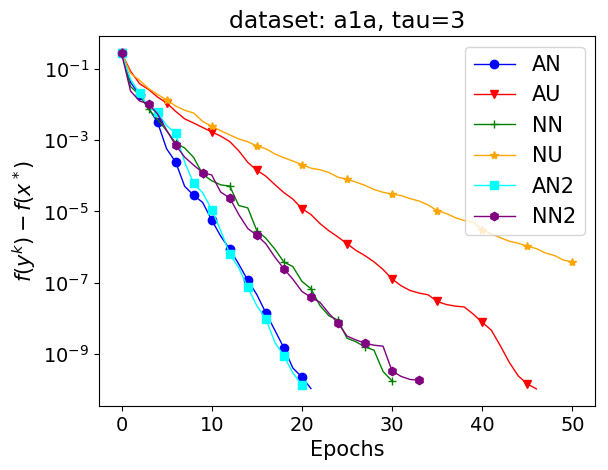}
\end{minipage}%
\begin{minipage}{0.25\textwidth}
  \centering
\includegraphics[width =  \textwidth ]{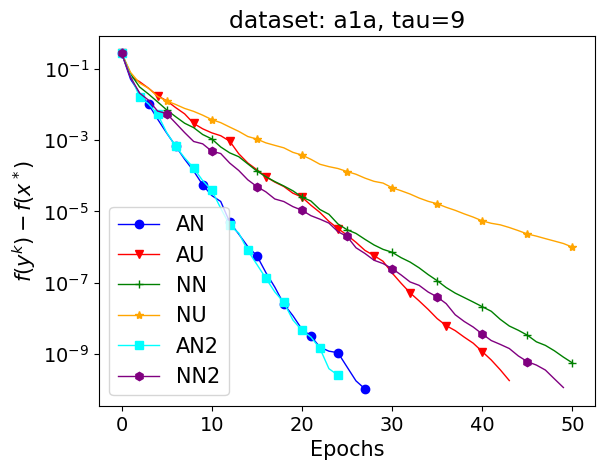}
\end{minipage}%
\begin{minipage}{0.25\textwidth}
  \centering
\includegraphics[width =  \textwidth ]{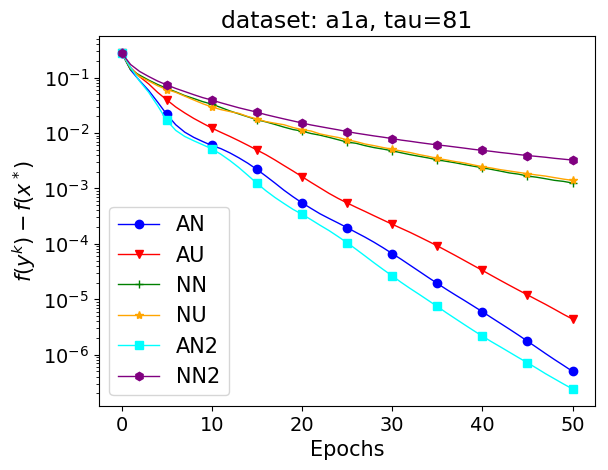}
\end{minipage}%
\\
\begin{minipage}{0.25\textwidth}
  \centering
\includegraphics[width =  \textwidth ]{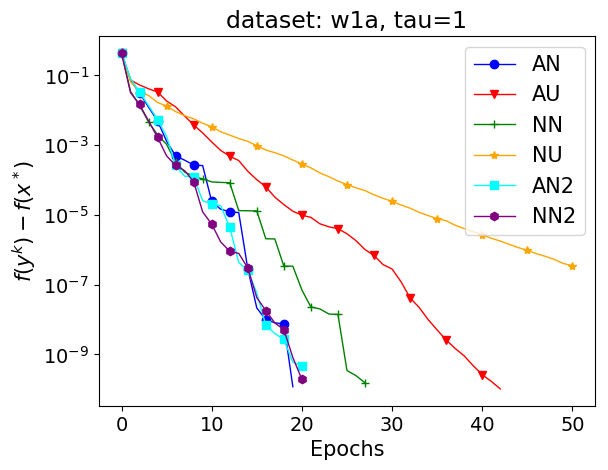}
\end{minipage}%
\begin{minipage}{0.25\textwidth}
  \centering
\includegraphics[width =  \textwidth ]{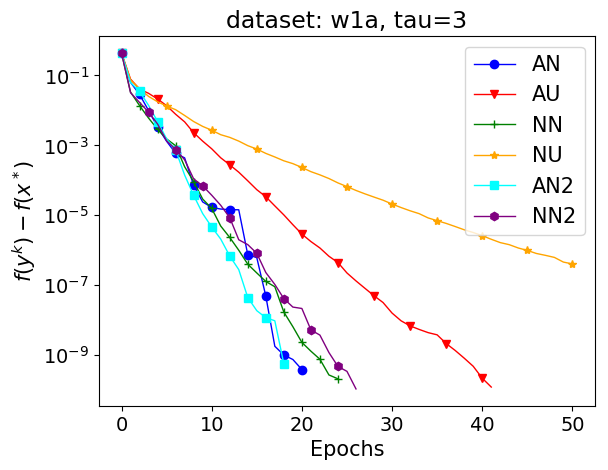}
\end{minipage}%
\begin{minipage}{0.25\textwidth}
  \centering
\includegraphics[width =  \textwidth ]{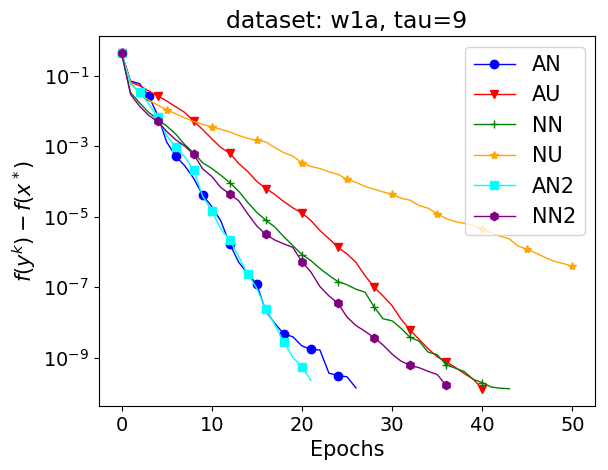}
\end{minipage}%
\begin{minipage}{0.25\textwidth}
  \centering
\includegraphics[width =  \textwidth ]{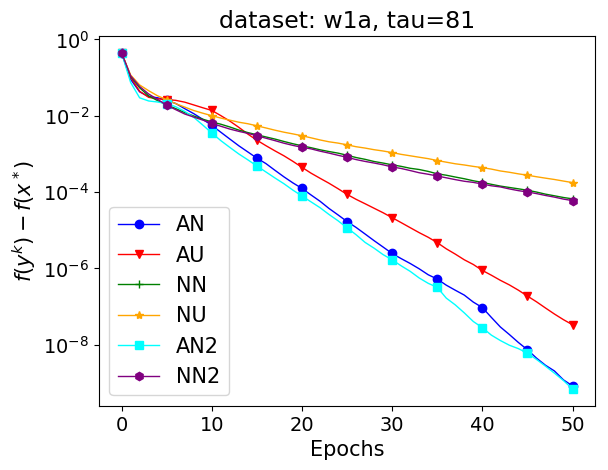}
\end{minipage}%
\\
\begin{minipage}{0.25\textwidth}
  \centering
\includegraphics[width =  \textwidth ]{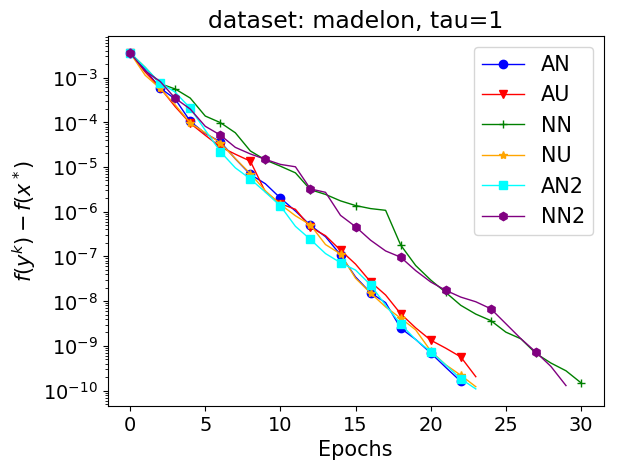}
\end{minipage}%
\begin{minipage}{0.25\textwidth}
  \centering
\includegraphics[width =  \textwidth ]{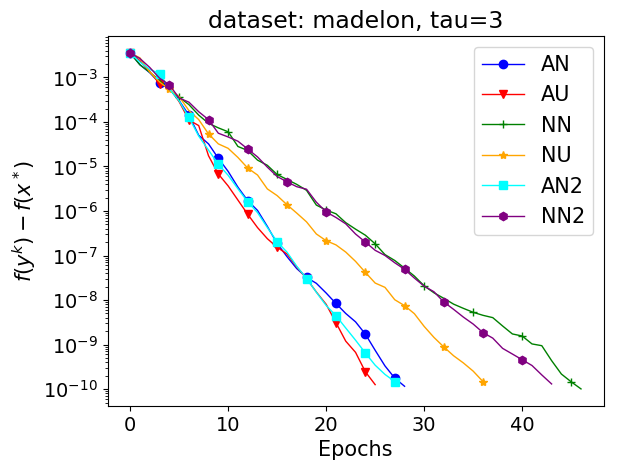}
\end{minipage}%
\begin{minipage}{0.25\textwidth}
  \centering
\includegraphics[width =  \textwidth ]{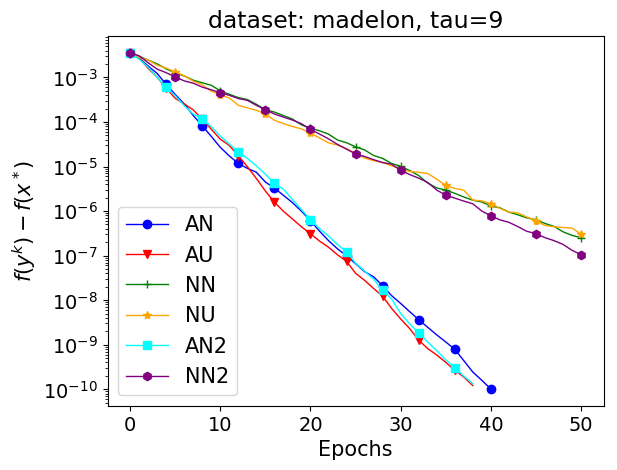}
\end{minipage}%
\begin{minipage}{0.25\textwidth}
  \centering
\includegraphics[width =  \textwidth ]{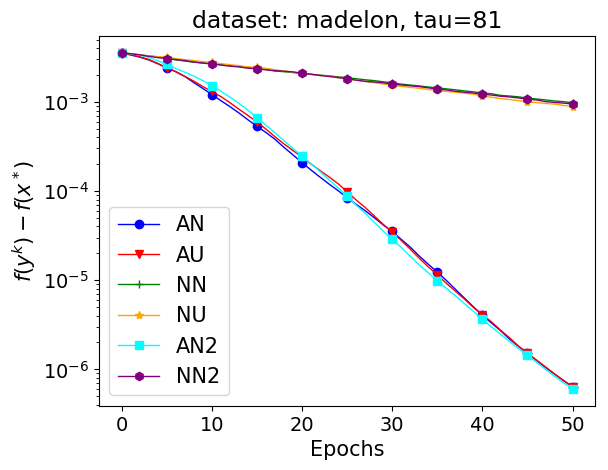}
\end{minipage}%
\\
\centering
\begin{minipage}{0.25\textwidth}
  \centering
\includegraphics[width =  \textwidth ]{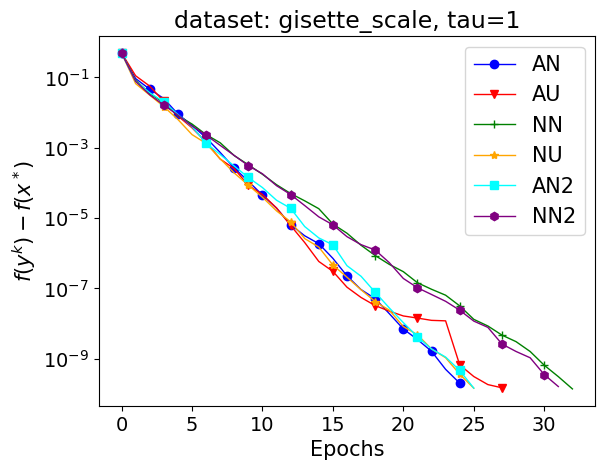}
\end{minipage}%
\begin{minipage}{0.25\textwidth}
  \centering
\includegraphics[width =  \textwidth ]{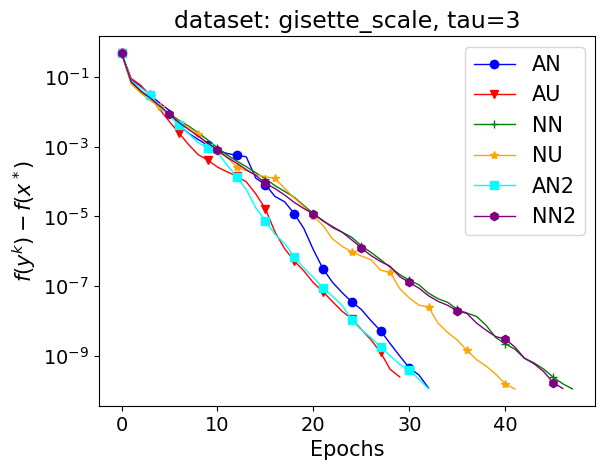}
\end{minipage}%
\begin{minipage}{0.25\textwidth}
  \centering
\includegraphics[width =  \textwidth ]{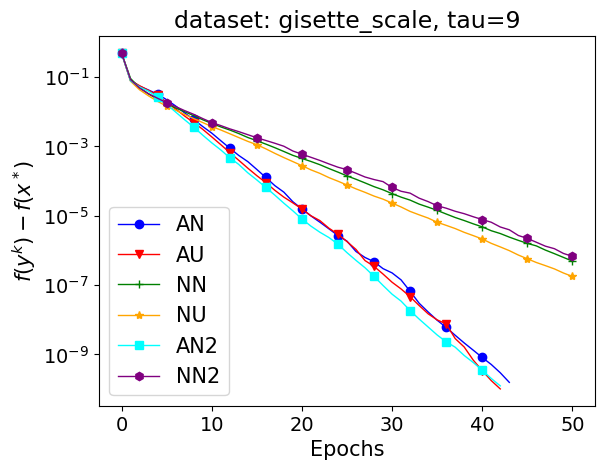}
\end{minipage}%
\begin{minipage}{0.25\textwidth}
  \centering
\includegraphics[width =  \textwidth ]{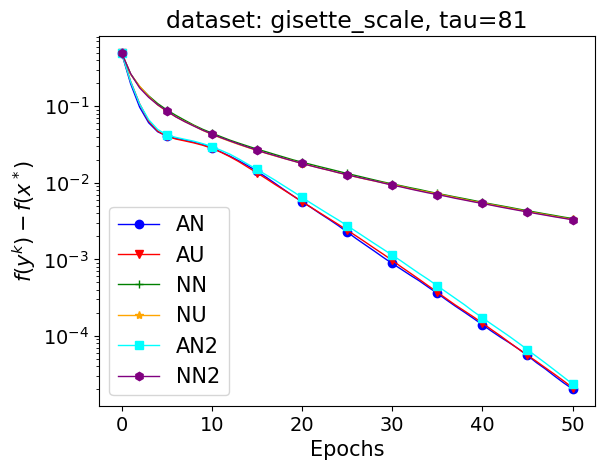}
\end{minipage}%
\\
\begin{minipage}{0.25\textwidth}
  \centering
\includegraphics[width =  \textwidth ]{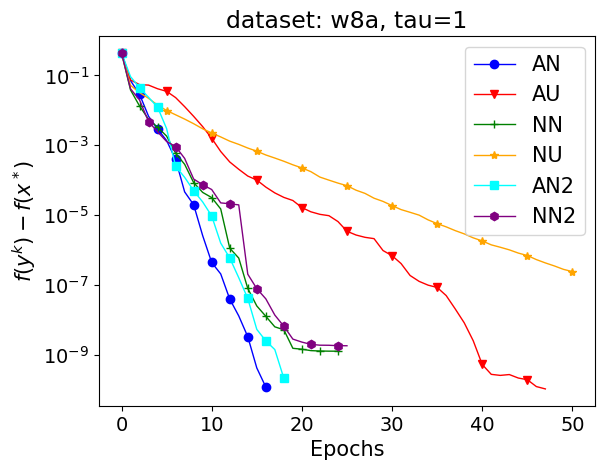}
\end{minipage}%
\begin{minipage}{0.25\textwidth}
  \centering
\includegraphics[width =  \textwidth ]{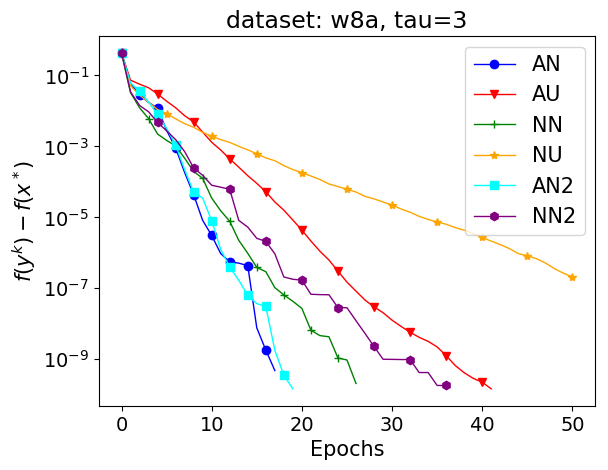}
\end{minipage}%
\begin{minipage}{0.25\textwidth}
  \centering
\includegraphics[width =  \textwidth ]{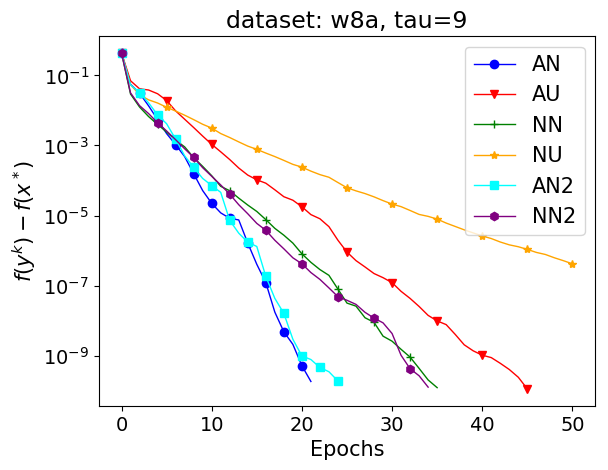}
\end{minipage}%
\begin{minipage}{0.25\textwidth}
  \centering
\includegraphics[width =  \textwidth ]{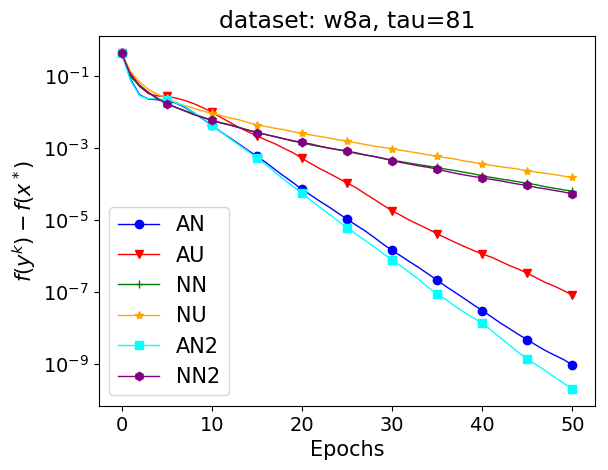}
\end{minipage}%
\caption{Accelerated coordinate desent applied on the logistic regression problem, for various LibSVM datasets and minibatch sizes $\tau$}\label{fig:logreg_noncor}
\end{figure}

Note that, for for some datasets and especially bigger minibatch sizes, the effect of importance sampling is sometimes negligible. To demonstrate the power of importance sampling, in the next experiment, we first corrupt the data -- we multiply each row and column of the data matrix $\mA$ by random number from uniform distribution over $[0,1]$. The results can be seen in Figure~\ref{fig:logreg_cor}. As expected, the effect of importance sampling becomes more significant.

\begin{figure}[H]
\centering
\begin{minipage}{0.25\textwidth}
  \centering
\includegraphics[width =  \textwidth ]{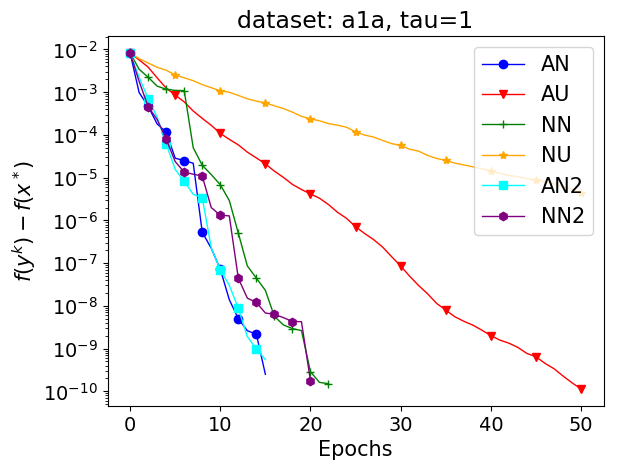}
\end{minipage}%
\begin{minipage}{0.25\textwidth}
  \centering
\includegraphics[width =  \textwidth ]{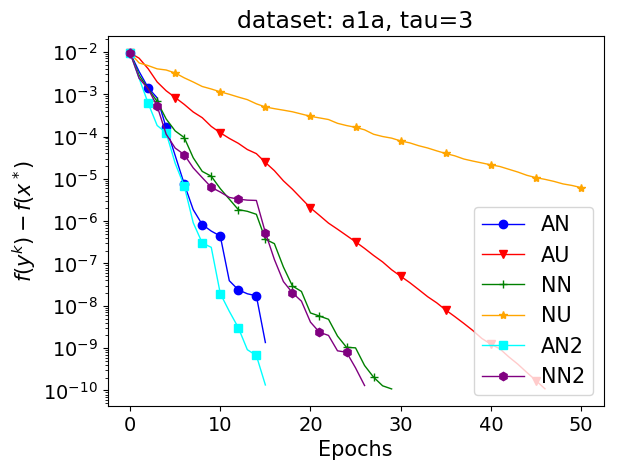}
\end{minipage}%
\begin{minipage}{0.25\textwidth}
  \centering
\includegraphics[width =  \textwidth ]{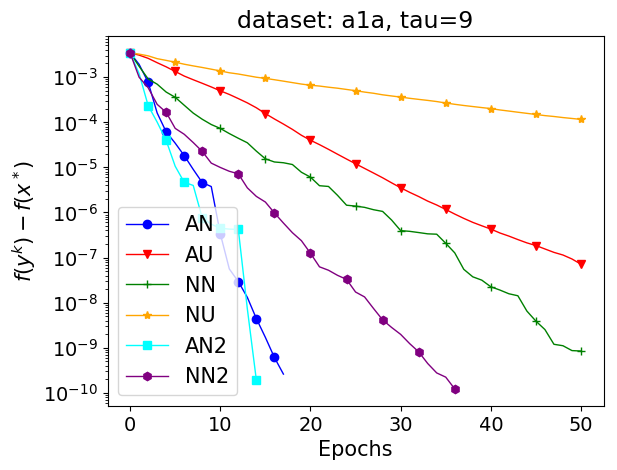}
\end{minipage}%
\begin{minipage}{0.25\textwidth}
  \centering
\includegraphics[width =  \textwidth ]{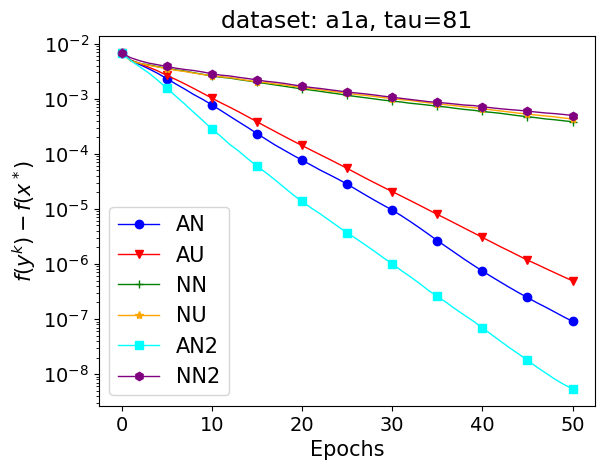}
\end{minipage}%
\\
\begin{minipage}{0.25\textwidth}
  \centering
\includegraphics[width =  \textwidth ]{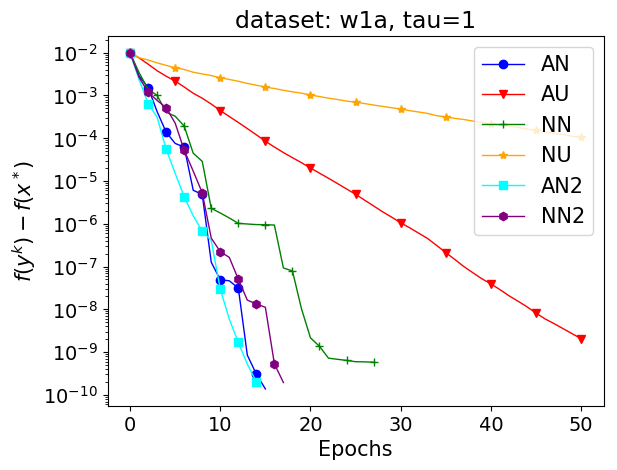}
\end{minipage}%
\begin{minipage}{0.25\textwidth}
  \centering
\includegraphics[width =  \textwidth ]{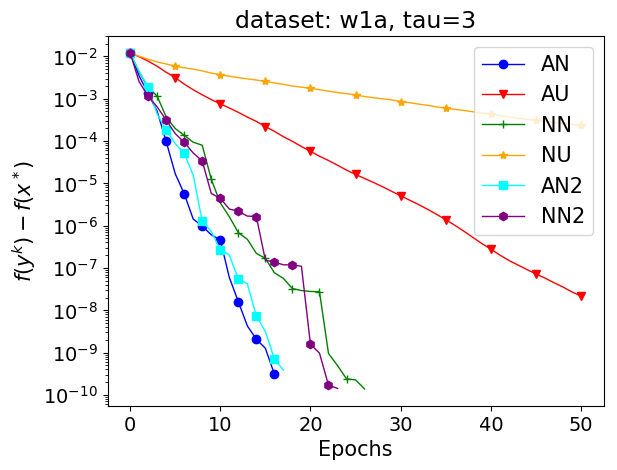}
\end{minipage}%
\begin{minipage}{0.25\textwidth}
  \centering
\includegraphics[width =  \textwidth ]{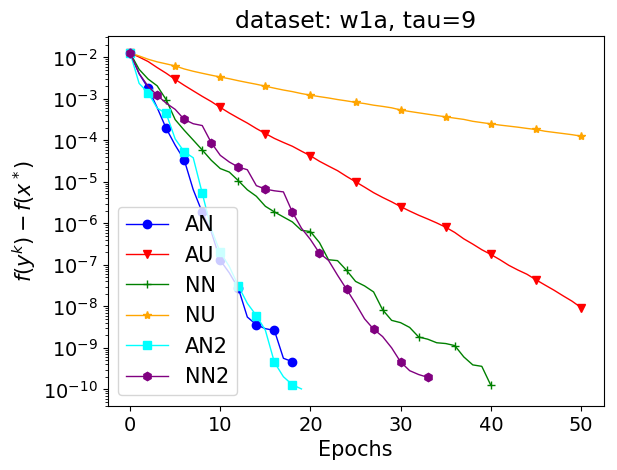}
\end{minipage}%
\begin{minipage}{0.25\textwidth}
  \centering
\includegraphics[width =  \textwidth ]{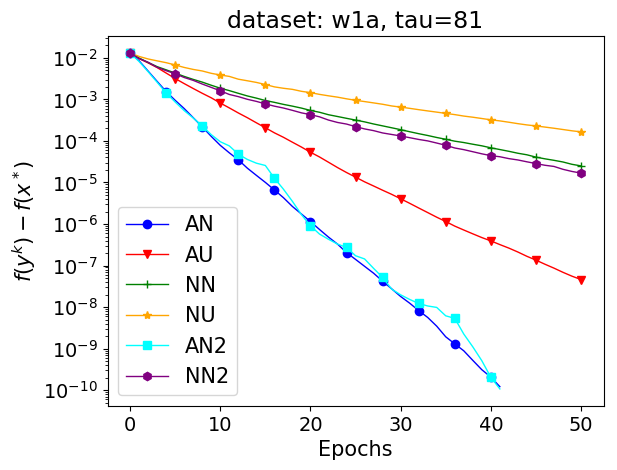}
\end{minipage}%
\\
\begin{minipage}{0.25\textwidth}
  \centering
\includegraphics[width =  \textwidth ]{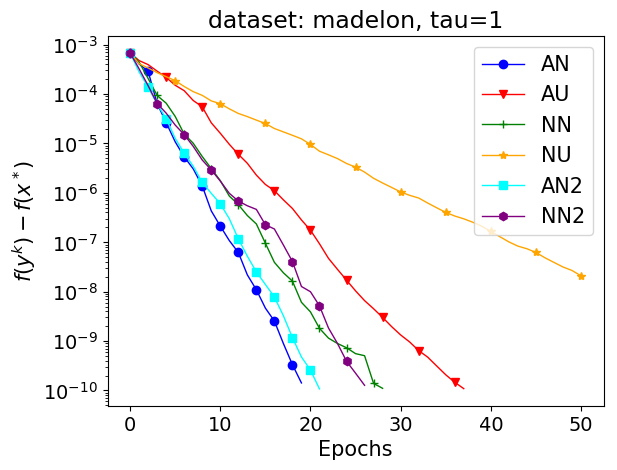}
\end{minipage}%
\begin{minipage}{0.25\textwidth}
  \centering
\includegraphics[width =  \textwidth ]{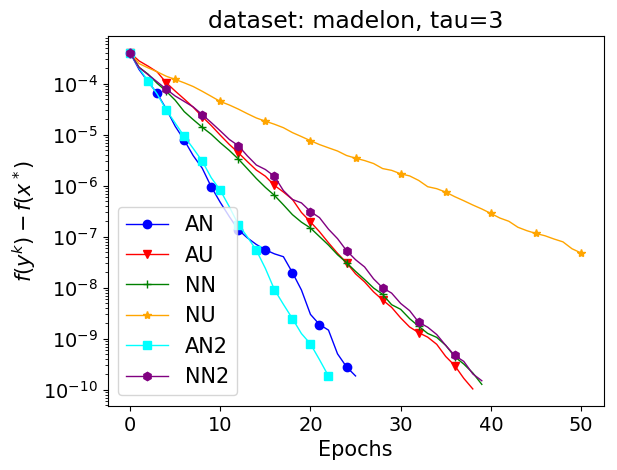}
\end{minipage}%
\begin{minipage}{0.25\textwidth}
  \centering
\includegraphics[width =  \textwidth ]{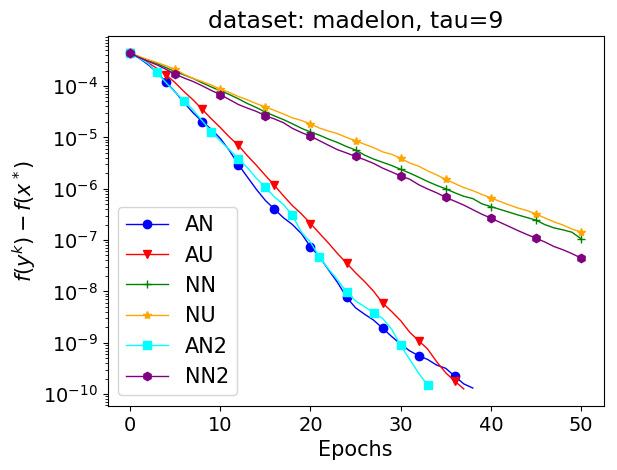}
\end{minipage}%
\begin{minipage}{0.25\textwidth}
  \centering
\includegraphics[width =  \textwidth ]{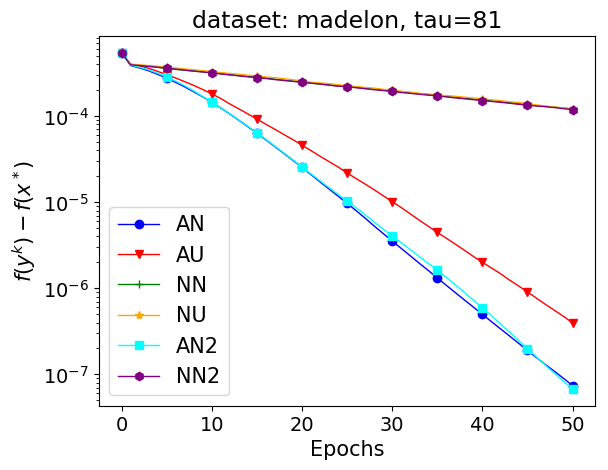}
\end{minipage}%
\\
\centering
\begin{minipage}{0.25\textwidth}
  \centering
\includegraphics[width =  \textwidth ]{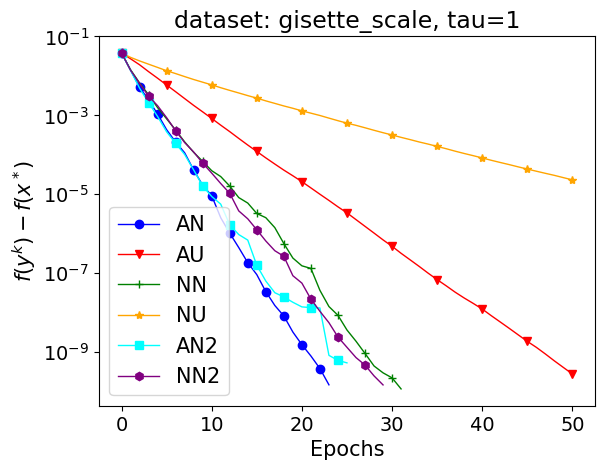}
\end{minipage}%
\begin{minipage}{0.25\textwidth}
  \centering
\includegraphics[width =  \textwidth ]{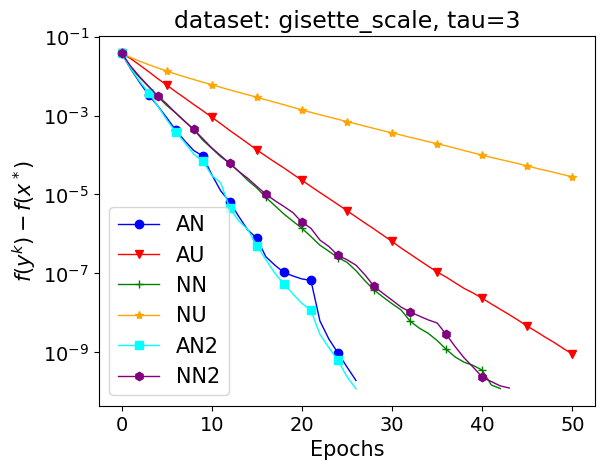}
\end{minipage}%
\begin{minipage}{0.25\textwidth}
  \centering
\includegraphics[width =  \textwidth ]{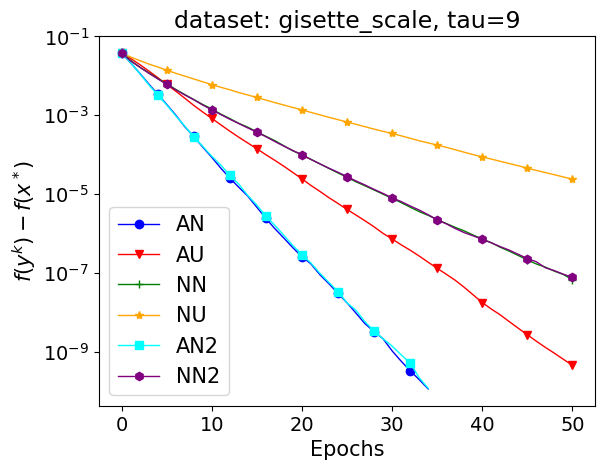}
\end{minipage}%
\begin{minipage}{0.25\textwidth}
  \centering
\includegraphics[width =  \textwidth ]{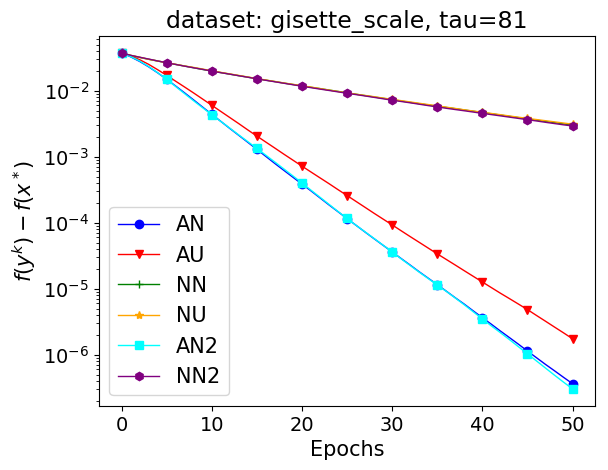}
\end{minipage}%
\\
\begin{minipage}{0.25\textwidth}
  \centering
\includegraphics[width =  \textwidth ]{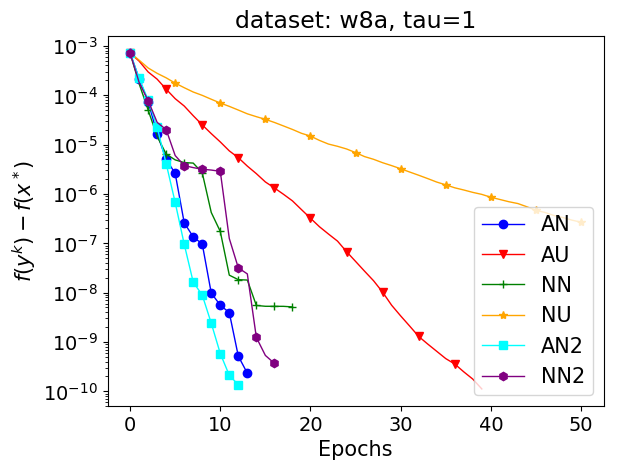}
\end{minipage}%
\begin{minipage}{0.25\textwidth}
  \centering
\includegraphics[width =  \textwidth ]{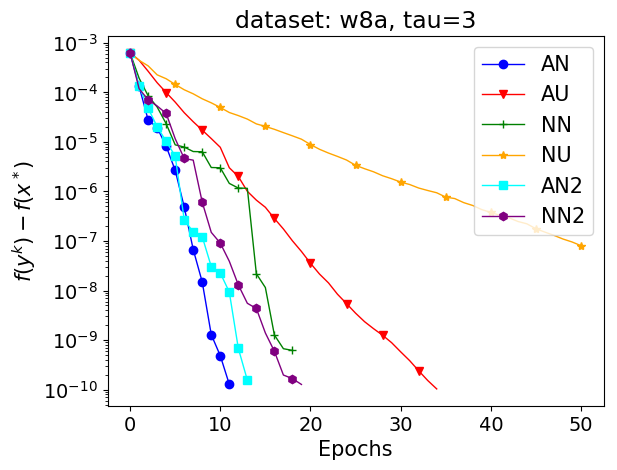}
\end{minipage}%
\begin{minipage}{0.25\textwidth}
  \centering
\includegraphics[width =  \textwidth ]{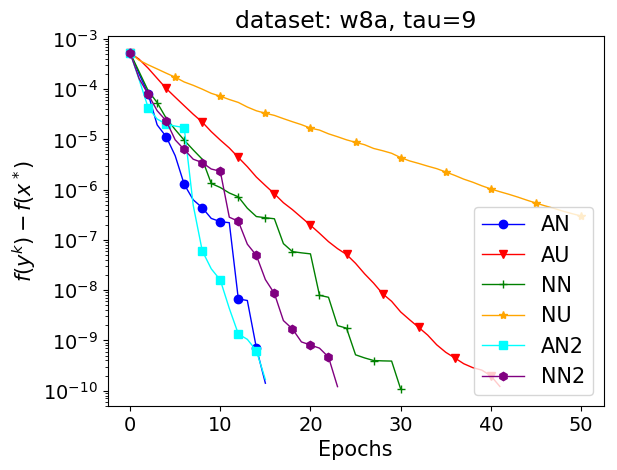}
\end{minipage}%
\begin{minipage}{0.25\textwidth}
  \centering
\includegraphics[width =  \textwidth ]{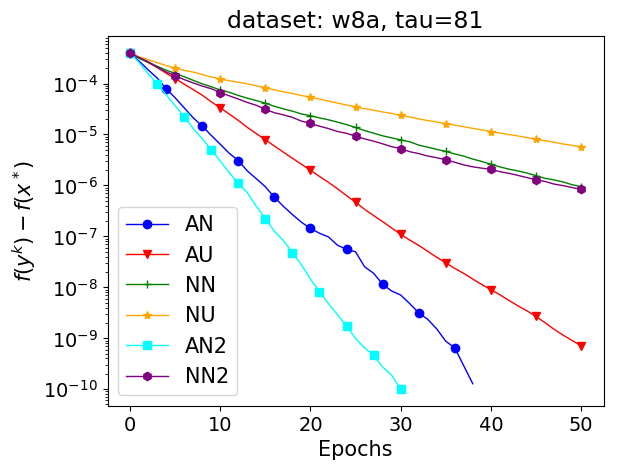}
\end{minipage}%
\caption{Accelerated coordinate desent applied on the logistic regression problem, for various rescaled LibSVM datasets and minibatch sizes $\tau$}\label{fig:logreg_cor}
\end{figure}

\clearpage
\subsubsection{Practical method on larger dataset \label{sec:practical}}
For completeness, we restate here experiments from Figure~\ref{fig:logreg_big}. We have chosen regularization parameter $\lambda$ to be the average diagonal element of the smoothness matrix and estimated $v,\sigma$ as described in Section~\ref{sec:exp}. 

\begin{figure}[H]
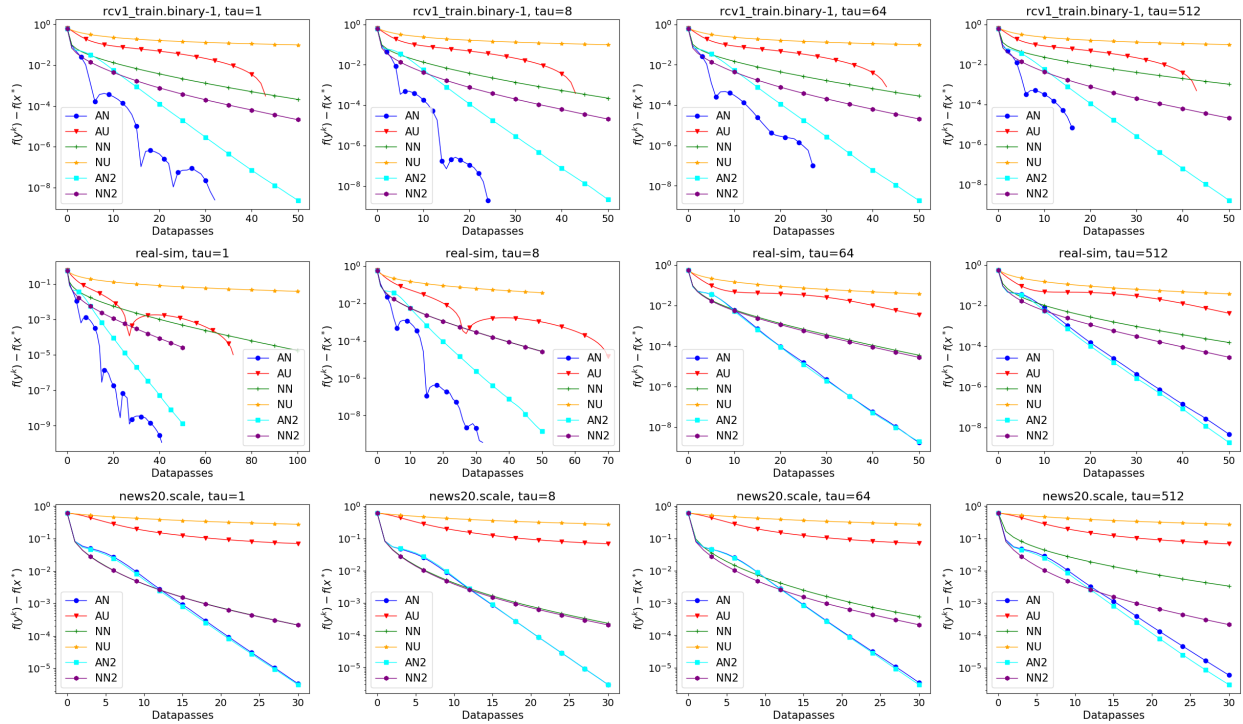

\centering
\begin{minipage}{0.25\textwidth}
  \centering
\includegraphics[width =  \textwidth ]{NEW99_REMOTE_tau_1_data_rcv.png}
\end{minipage}%
\begin{minipage}{0.25\textwidth}
  \centering
\includegraphics[width =  \textwidth ]{NEW99_REMOTE_tau_8_data_rcv.png}
\end{minipage}%
\begin{minipage}{0.25\textwidth}
  \centering
\includegraphics[width =  \textwidth ]{NEW99_REMOTE_tau_64_data_rcv.png}
\end{minipage}%
\begin{minipage}{0.25\textwidth}
  \centering
\includegraphics[width =  \textwidth ]{NEW99_REMOTE_tau_512_data_rcv.png}
\end{minipage}%
\\
\begin{minipage}{0.25\textwidth}
  \centering
\includegraphics[width =  \textwidth ]{NEW99_REMOTE_tau_1_data_rea.png}
\end{minipage}%
\begin{minipage}{0.25\textwidth}
  \centering
\includegraphics[width =  \textwidth ]{NEW99_REMOTE_tau_8_data_rea.png}
\end{minipage}%
\begin{minipage}{0.25\textwidth}
  \centering
\includegraphics[width =  \textwidth ]{NEW99_REMOTE_tau_64_data_rea.png}
\end{minipage}%
\begin{minipage}{0.25\textwidth}
  \centering
\includegraphics[width =  \textwidth ]{NEW99_REMOTE_tau_512_data_rea.png}
\end{minipage}%
\\
\begin{minipage}{0.25\textwidth}
  \centering
\includegraphics[width =  \textwidth ]{NEW99_REMOTE_tau_1_data_new.png}
\end{minipage}%
\begin{minipage}{0.25\textwidth}
  \centering
\includegraphics[width =  \textwidth ]{NEW99_REMOTE_tau_8_data_new.png}
\end{minipage}%
\begin{minipage}{0.25\textwidth}
  \centering
\includegraphics[width =  \textwidth ]{NEW99_REMOTE_tau_64_data_new.png}
\end{minipage}%
\begin{minipage}{0.25\textwidth}
  \centering
\includegraphics[width =  \textwidth ]{NEW99_REMOTE_tau_512_data_new.png}
\end{minipage}%
\caption{Six variants of coordinate descent (\texttt{AN}, \texttt{AU}, \texttt{NN}, \texttt{NU}, \texttt{AN2} and \texttt{AU2})  applied to a logistic regression problem, with minibatch sizes $\tau=1, 8, 64$ and $512$.}\label{fig:logreg_big2}
\end{figure}

\clearpage
\subsection{Support Vector Machines \label{sec:SVM}}
In this section we apply \texttt{ACD} on the dual of  SVM problem with squared hinge loss, i.e.,
\[
f(x)= \frac{1}{\lambda n^2} \sum_{j=1}^m\left( \sum_{i=1}^n b_i \mA_{ji} x_i\right)^2-\frac1n \sum_{i=1}^n x_i +\frac{1}{4n}\sum_{i=1}^n x_i^2 + \cI_{[0,\infty]}(x),
\]
where $\cI_{[0,\infty]}$ stands for indicator function of set $[0,\infty]$, i.e. $\cI_{[0,\infty]}(x)=0$ if $x\in \R^n_+$, otherwise $\cI_{[0,\infty]}(x)=\infty$. As for the data, we have rescaled each row and each column of the data matrix coming frol LibSVM by random scalar generated from uniform distribution over $[0,1]$. We have chosen regularization parameter $\lambda$ to be maximal diagonal element of the smoothness matrix divided by 10 in each experiment below. We deal with nonsmooth indicator function using proximal operator, which happens to be a projection in this case. We choose ESO parameters $v$ from Lemma~\ref{thm:special-ESO-result}, while estimating the smoothness matrix as $\sqrt{n}$--times multiple of its diagonal. An estimate of the strong convexity $\sigma$ for acceleration was chosen to be minimal diagonal element of the smoothness matrix, therefore we adapt a similar approach as in Section~\ref{sec:practical}.

Recall that we did not provide a theory for the proximal steps. However, we make the experiment to demonstrate that \texttt{ACD} can solve big data problems on top of large dimensional problems. The results (Figure~\ref{fig:SVM}) again suggests the great significance of the acceleration and importance sampling.

\begin{figure}[H]
\centering
\begin{minipage}{0.25\textwidth}
  \centering
\includegraphics[width =  \textwidth ]{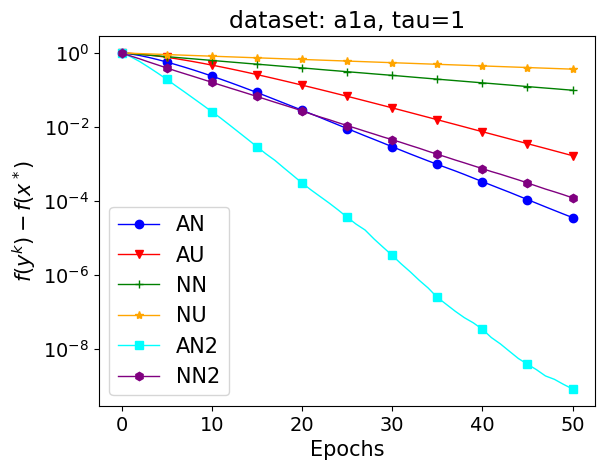}
\end{minipage}%
\begin{minipage}{0.25\textwidth}
  \centering
\includegraphics[width =  \textwidth ]{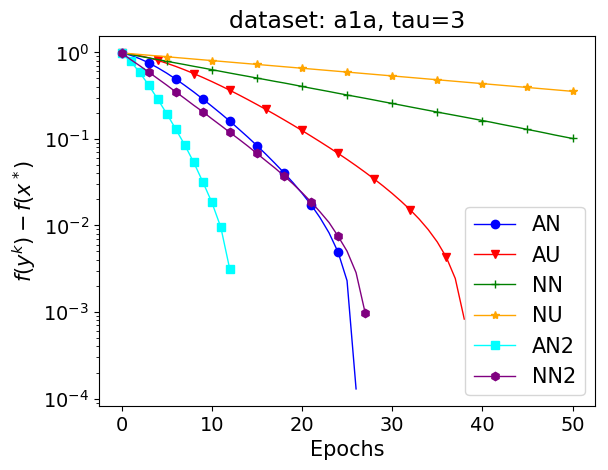}
\end{minipage}%
\begin{minipage}{0.25\textwidth}
  \centering
\includegraphics[width =  \textwidth ]{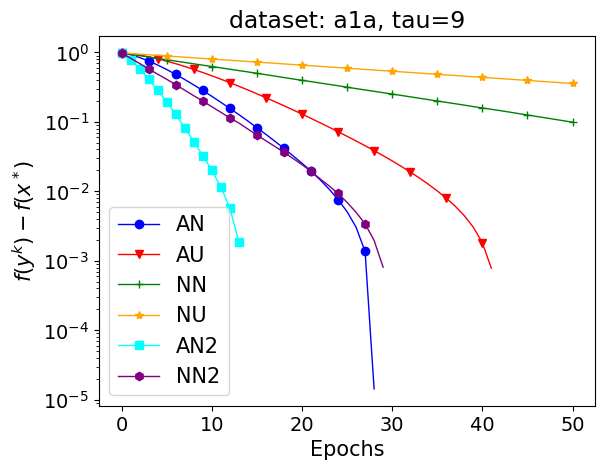}
\end{minipage}%
\begin{minipage}{0.25\textwidth}
  \centering
\includegraphics[width =  \textwidth ]{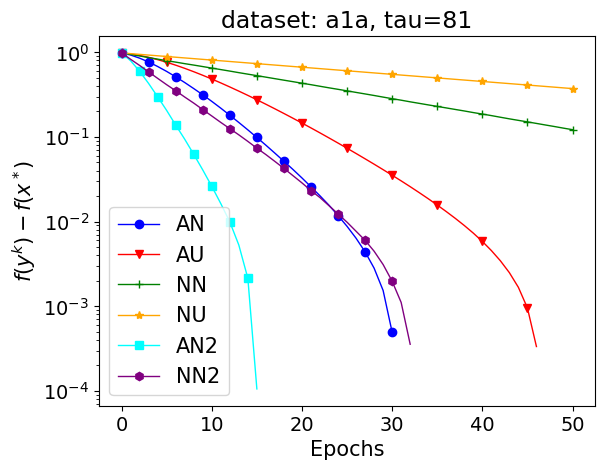}
\end{minipage}%
\\
\begin{minipage}{0.25\textwidth}
  \centering
\includegraphics[width =  \textwidth ]{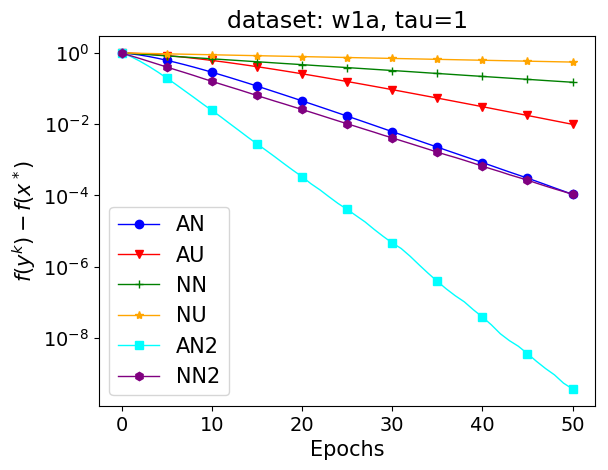}
\end{minipage}%
\begin{minipage}{0.25\textwidth}
  \centering
\includegraphics[width =  \textwidth ]{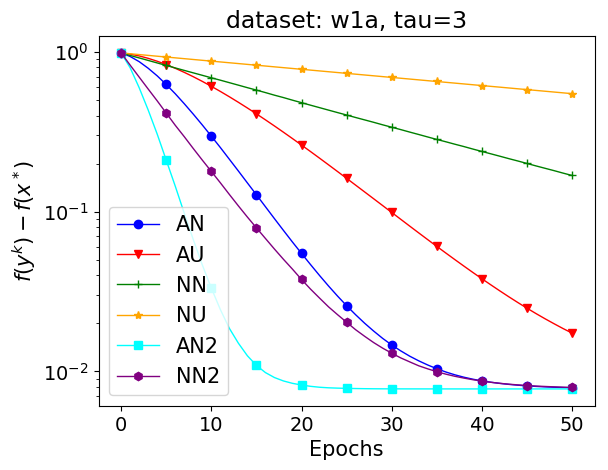}
\end{minipage}%
\begin{minipage}{0.25\textwidth}
  \centering
\includegraphics[width =  \textwidth ]{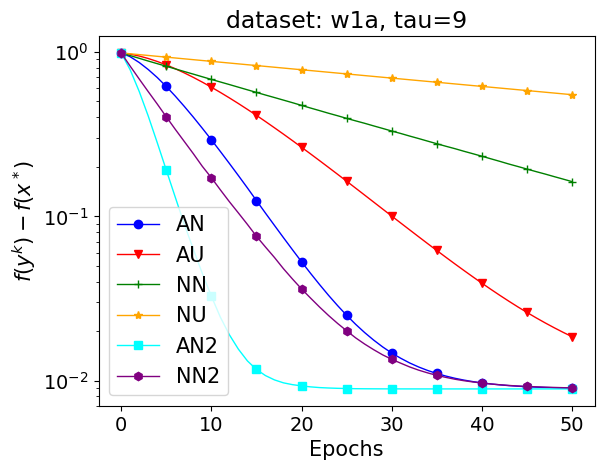}
\end{minipage}%
\begin{minipage}{0.25\textwidth}
  \centering
\includegraphics[width =  \textwidth ]{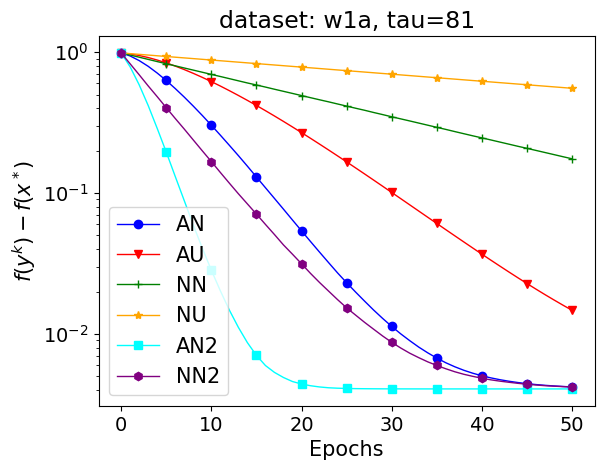}
\end{minipage}%
\\
\begin{minipage}{0.25\textwidth}
  \centering
\includegraphics[width =  \textwidth ]{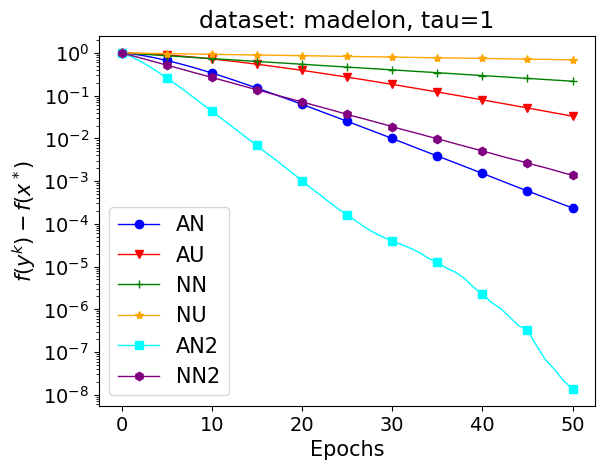}
\end{minipage}%
\begin{minipage}{0.25\textwidth}
  \centering
\includegraphics[width =  \textwidth ]{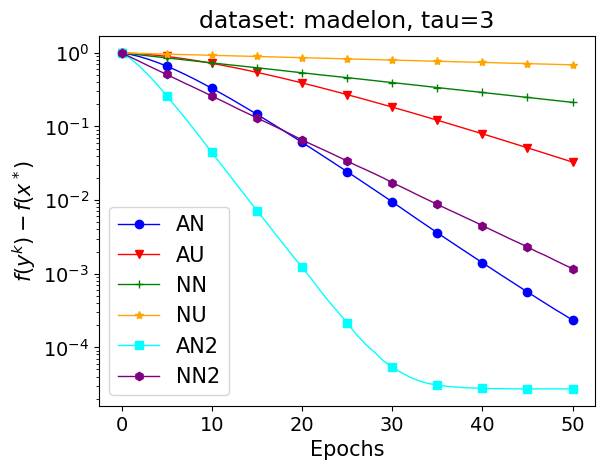}
\end{minipage}%
\begin{minipage}{0.25\textwidth}
  \centering
\includegraphics[width =  \textwidth ]{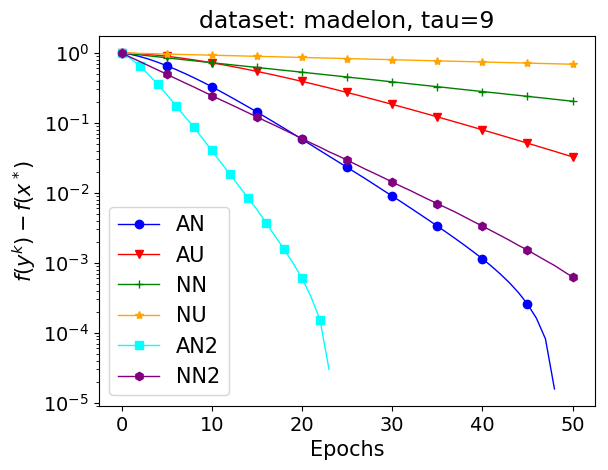}
\end{minipage}%
\begin{minipage}{0.25\textwidth}
  \centering
\includegraphics[width =  \textwidth ]{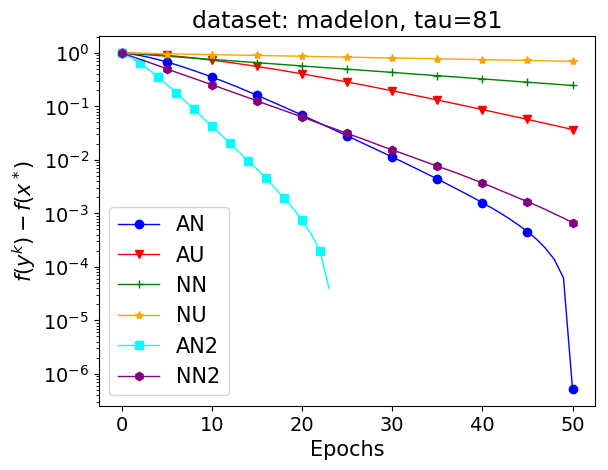}
\end{minipage}%
\\
\centering
\begin{minipage}{0.25\textwidth}
  \centering
\includegraphics[width =  \textwidth ]{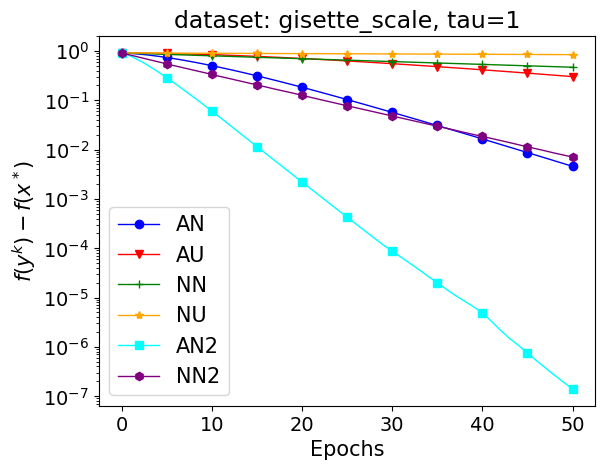}
\end{minipage}%
\begin{minipage}{0.25\textwidth}
  \centering
\includegraphics[width =  \textwidth ]{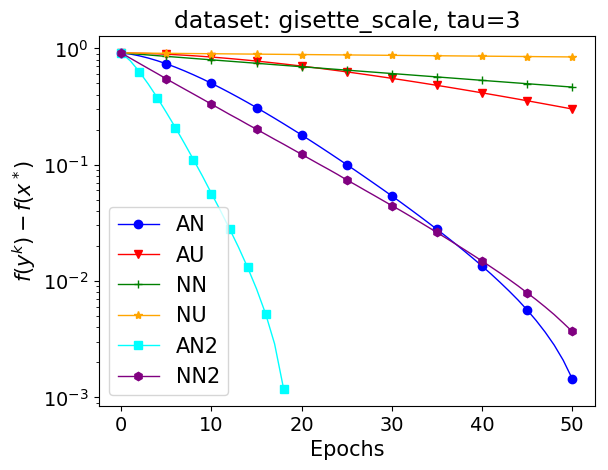}
\end{minipage}%
\begin{minipage}{0.25\textwidth}
  \centering
\includegraphics[width =  \textwidth ]{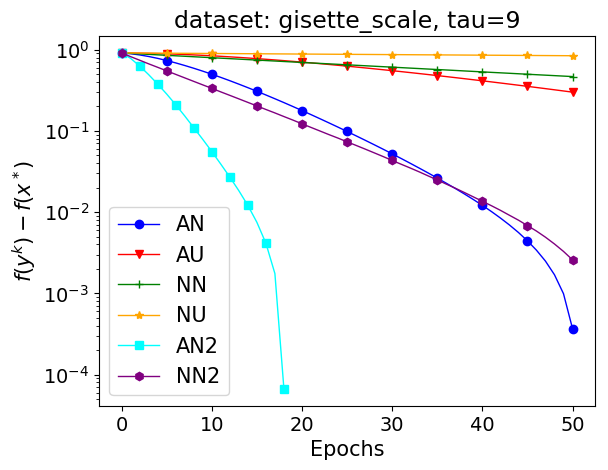}
\end{minipage}%
\begin{minipage}{0.25\textwidth}
  \centering
\includegraphics[width =  \textwidth ]{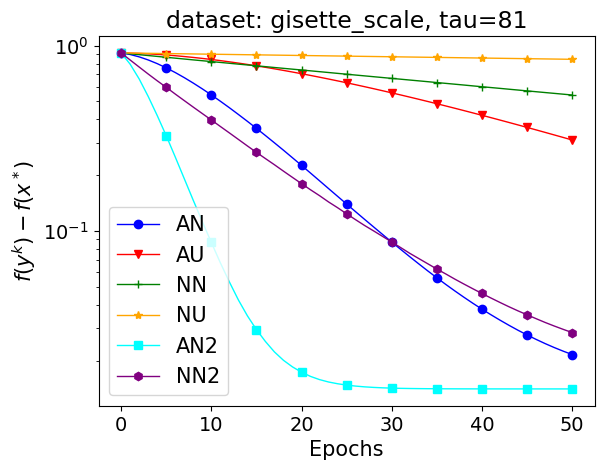}
\end{minipage}%
\caption{Accelerated coordinate desent applied on the dual of of SVM with squared hinge loss, for various LibSVM datasets } \label{fig:SVM}
\end{figure}

\clearpage

\end{document}